\newtheorem{theorem}{Theorem}[section]
\newtheorem{lemma}{Lemma}[section]
\newtheorem{corollary}{Corollary}[section]
\newtheorem{proposition}{Proposition}[section]
\theoremstyle{definition}
\newtheorem{remark}{Remark}[section]
\newtheorem{definition}{Definition}[section]
\numberwithin{equation}{section}
\begin{document}
\arraycolsep=1pt
\title{A sharp Trudinger-Moser type inequality involving $L^{n}$ norm in  the entire space $\mathbb{R}^{n}$\footnotetext{\hspace{-0.35cm} 2000 {\it Mathematics Subject
Classification}. Primary 35J50; Secondary 35J20,46E35.
\endgraf {\it Key words and phrases: Trudinger-Moser inequality;
Blow up analysis; Extremal functions; unbounded domain}.
\endgraf The first author was partially supported by a US NSF grant and a Simons fellowship from Simons foundation  and and the second author was
partially supported by Natural Science Foundation of China (11601190), Natural Science Foundation of
Jiangsu Province (BK20160483) and Jiangsu University Foundation Grant (16JDG043).}}
\author{Guozhen Lu\\Department of Mathematics\\
University of Connecticut\\
Storrs, CT 06269, USA\\
E-mail: guozhen.lu@uconn.edu \vspace{0.5cm}\\ Maochun Zhu\\Faculty of Science\\Jiangsu University\\Zhenjiang, 212013, China\\E-mail: zhumaochun2006@126.com }
\date{} \maketitle

\begin{abstract}
Let $W^{1,n}\left(
\mathbb{R}
^{n}\right)  $ be the standard Sobolev space and $\left\Vert \cdot\right\Vert
_{n}$ be the $L^{n}$ norm on $\mathbb{R}^n$. We establish a sharp form of the following
Trudinger-Moser inequality involving the $L^{n}$ norm
\[
\underset{\left\Vert u\right\Vert _{W^{1,n}\left(
\mathbb{R}
^{n}\right)  }=1}{\sup}\int_{%
\mathbb{R}
^{n}}\Phi\left(  \alpha_{n}\left\vert u\right\vert ^{\frac{n}{n-1}}\left(
1+\alpha\left\Vert u\right\Vert _{n}^{n}\right)  ^{\frac{1}{n-1}}\right)
dx<+\infty
\]
in the entire space $\mathbb{R}^n$ for any $0\leq\alpha<1$, where $\Phi\left(  t\right)  =e^{t}-\underset{j=0}{\overset{n-2}{\sum}}%
\frac{t^{j}}{j!}$, $\alpha_{n}=n\omega_{n-1}^{\frac{1}{n-1}}$ and $\omega_{n-1}$ is the $n-1$ dimensional surface measure of the unit ball
in $\mathbb{R}^n$. We also show that  the above supremum is infinity for all
$\alpha\geq1$. Moreover, we prove the supremum is attained, namely, there exists a maximizer for the above supremum when $\alpha>0$ is
sufficiently small. The proof is based on the method of blow-up analysis of the nonlinear Euler-Lagrange equations of the Trudinger-Moser functionals. 

 Our result sharpens  the recent work   \cite{J. M. do1}
in which they show that the above inequality holds in a weaker form
when $\Phi(t)$ is replaced by a strictly smaller $\Phi^*(t)=e^{t}-\underset{j=0}{\overset{n-1}{\sum}}%
\frac{t^{j}}{j!}$. (Note that $\Phi(t)=\Phi^*(t)+\frac{t^{n-1}}{(n-1)!}$).

\end{abstract}

\section{Introduction}

Let $\Omega\subseteq%
\mathbb{R}
^{n}$ be an open set and $W^{1,q}_0\left(  \Omega\right)  $ be the usual Sobolev
space, that is, the completion of $C_{0}^{\infty
}\left(  \Omega\right)  $ under the norm%
\[
\left\Vert u\right\Vert _{W^{1,q}\left(  \Omega\right)  }=\left(  \int
_{\Omega}\left(  \left\vert u\right\vert ^{q}+\left\vert \nabla u\right\vert
^{q}\right)  dx\right)  ^{\frac{1}{q}}.
\]
If $1\leq q<n$, the classical Sobolev embedding says that
$W_{0}^{1,q}\left( \Omega\right)  \hookrightarrow L^{s}\left(
\Omega\right)  $ for $1\leq s\leq q^{\ast}$, where
$q^{\ast}:=\frac{nq}{n-q}$. When $q=n$, it is known that
\[
W_{0}^{1,n}\left(  \Omega\right)  \hookrightarrow L^{s}\left(  \Omega\right)
\text{ for any }n\leq s<+\infty\text{, }%
\]
but $\ W_{0}^{1,n}\left(  \Omega\right)  \varsubsetneq L^{\infty}\left(
\Omega\right)  $. The analogue of the optimal Sobolev embedding is the well-known
Trudinger-Moser inequality (\cite{moser},\cite{Trudinger}) which states as follows%

\begin{equation}
\underset{\left\Vert \nabla u\right\Vert _{L^n\left(  \Omega\right)
}\leq 1}{\underset{u\in W_{0}^{1,n}\left(  \Omega\right)
}{\sup}}\int_{\Omega }e^{\alpha\left\vert u\right\vert
^{\frac{n}{n-1}}}dx<\infty\text{ iff
}\alpha\leq\alpha_{n}=n\omega_{n-1}^{\frac{1}{n-1}}, \label{moser-tru}%
\end{equation}
where $\omega_{n-1}$ is the $n-1$ dimensional surface measure of the unit ball
in $%
\mathbb{R}
^{n}$ and $\Omega$ is a   domain of finite measure in $%
\mathbb{R}
^{n}$.

\medskip

Due to a wide range of applications in  geometric analysis and
partial differential equations (see \cite{de Figueiredo},
\cite{Adimurthi}, \cite{lam-lu} and references therein),  numerous
generalizations, extensions and applications of the Trudinger-Moser
inequality have been given. We recall in particular the result
obtained by P.-L. Lions \cite{lions}, which says that if $\left\{
u_{k}\right\}  $ is a sequence of functions in $W_{0}^{1,n}\left(
\Omega\right)  $\ with $\left\Vert \nabla u_{k}\right\Vert _{L^n(\Omega)}=1$
such that $u_{k}\rightarrow u$ weakly in $W^{1,n}\left(
\Omega\right)  $, then for any $0<p<\left( 1-\left\Vert \nabla
u\right\Vert
_{L^n(\Omega)}^{n}\right)  ^{-1/\left(  n-1\right)  }$, one has%

\[
\underset{k}{\sup}\int_{ \Omega}e^{\alpha_{n}p\left\vert u_{k}\right\vert
^{\frac{n}{n-1}}}dx<\infty.
\]
This conclusion gives more precise information than
(\ref{moser-tru}) when $u_{k}\rightarrow u\neq0$ weakly in
$W_{0}^{1,n}\left( \Omega\right)$.  Based on the result of Lions
and the blowing up analysis method, Adimurthi and O. Druet
\cite{Adimurthi} obtained an improved Trudinger-Moser type
inequality in $\mathbb{R}
^{2}$ on bounded domains $\Omega$, which can be described as follows \[
\underset{\left\Vert \nabla u\right\Vert _{2}\leq1}{\underset{u\in W_{0}%
^{1,2}\left(  \Omega\right)  }{\sup}}\int_{%
\mathbb{R}
^{2}}e^{  4\pi\left\vert u\right\vert ^{2}\left(
1+\alpha\left\Vert u\right\Vert _{2}^{2}\right)}
dx<\infty,\text{iff }\alpha<\underset{ u\in W_{0}^{1,2}\left(
\Omega\right) ,u\neq0}{\inf}\frac{\left\Vert \nabla u\right\Vert
_{2}^{2}}{\left\Vert u\right\Vert _{2}^{2}}.
\]  Subsequently, this result was extended to $L^p$ norm in two dimension and high dimension as well in  Yang \cite{yang}, Lu and Yang \cite{lu-yang}, \cite{lu-yang 1} and Zhu \cite{zhu}.

\medskip

Another interesting extension of (\ref{moser-tru}) is to construct
Trudinger-Moser inequalities for unbounded domains. In fact, we note
that, even in the case $\alpha<\alpha_{n}$, the supremum in
(\ref{moser-tru}) becomes
infinite for domains $\Omega\subseteq%
\mathbb{R}
^{n}$ with $\left\vert \Omega\right\vert =+\infty$. Related
inequalities for unbounded domains have been first considered by
D.M. Cao \cite{cao} in the case $N=2$ and for any dimension by J.M.
do \'{O} \cite{J.M. do1} and Adachi-Tanaka \cite{Adachi-Tanaka} in the subcritical case,  that is $\alpha<\alpha_{n}$.
In \cite{ruf}, B. Ruf showed that in the case $N=2$, the exponent
$\alpha_{2}=4\pi$ becomes admissible if the Dirichlet norm
$\int_{\Omega}\left\vert \nabla u\right\vert ^{2}dx$ is replaced by
Sobolev norm $\int_{\Omega}\left(  \left\vert u\right\vert
^{2}+\left\vert \nabla u\right\vert ^{2}\right)  dx$, more
precisely, he proved that

\begin{equation}
\underset{\int_{\mathbb{R}
^{2}}\left(  \left\vert u\right\vert ^{2}+\left\vert \nabla
u\right\vert ^{2}\right)  dx\leq1}{\underset{u\in W^{1,2}\left(
\mathbb{R}
^{2}\right)  }{\sup}}\int_{%
\mathbb{R}
^{2}}\Phi\left(  \alpha\left\vert u\right\vert ^{2}\right)  dx<
+\infty,\text{ iff }\alpha\leq4\pi, %
\end{equation}
where $\Phi\left(  t\right)  =e^{t}-1$. Later, Y. X. Li and B. Ruf
\cite{liruf} extended Ruf's result to arbitrary dimension.

\medskip

Recently, M. de Souza and J. M. do \'{O} \cite{do1} obtained an Adimurthi-Druet type result in $\mathbb{R}^2$ for some weighted Sobolev space  $$E=\left\{  u\in W^{1,2}\left(
\mathbb{R}
^{2}\right)  :\int_{%
\mathbb{R}
^{2}}V\left(  x\right)  u^{2}dx<\infty\right\},$$ where the  potential $V$ is  radially  symmetric, increasing and coercive.

\medskip

In this
paper, we will  try to remove the potential $V$ in \cite{do1}, and we obtain  an Adimurthi-Druet type result for $W^{1,n}\left(
\mathbb{R}
^{n}\right)$.  Our main results read as follows
\begin{theorem}
\label{moser-trudinger} For any $0\leq\alpha<1$, the following holds:

\begin{equation}
\underset{\left\Vert u\right\Vert _{W^{1,n}\left(
\mathbb{R}
^{n}\right)  }=1}{\sup}\int_{%
\mathbb{R}
^{n}}\Phi\left(  \alpha_{n}\left\vert u\right\vert
^{\frac{n}{n-1}}\left( 1+\alpha\left\Vert u\right\Vert
_{n}^{n}\right)  ^{\frac{1}{n-1}}\right) dx<\infty,
\label{moser-Trudi}\end{equation}
where $\Phi\left(  t\right)  =e^{t}-\underset{j=0}{\overset{n-2}{\sum}}%
\frac{t^{j}}{j!}$. Moreover,   for any $\alpha\geq1,$ the
supremum is infinite.
\end{theorem}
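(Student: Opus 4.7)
I would prove the finiteness and infinity assertions by rather different routes.

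\emph{Finiteness for $0\le\alpha<1$.} The key observation is the pointwise identity $\Phi(t)=\Phi^{*}(t)+t^{n-1}/(n-1)!$, where $\Phi^{*}(t)=e^{t}-\sum_{j=0}^{n-1}t^{j}/j!$ is the kernel of the (weaker) inequality of \cite{J. M. do1}. Substituting $t=\alpha_{n}|u|^{n/(n-1)}(1+\alpha\|u\|_{n}^{n})^{1/(n-1)}$, the extra polynomial piece equals $\alpha_{n}^{n-1}(1+\alpha\|u\|_{n}^{n})|u|^{n}/(n-1)!$ and integrates to at most $\alpha_{n}^{n-1}(1+\alpha)/(n-1)!$, since $\|u\|_{n}^{n}\le\|u\|_{W^{1,n}}^{n}=1$; so the new bound reduces to the $\Phi^{*}$-version already in hand. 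For a self-contained proof I would Schwarz-symmetrize $u$ to a radial non-increasing function, split $\mathbb{R}^{n}=B_{\delta}\cup B_{\delta}^{c}$, use the decay $|u(r)|^{n}\le n\|u\|_{n}^{n}/(\omega_{n-1}r^{n})$ to dominate $\Phi^{*}$ by $C|u|^{n}$ outside $B_{\delta}$, and on $B_{\delta}$ apply Moser's inequality to $v=(u-u(\delta))_{+}\in W^{1,n}_{0}(B_{\delta})$ together with the elementary bound $(1-\tau)(1+\alpha\tau)\le 1$ ($\tau=\|u\|_{n}^{n}$, $\alpha<1$), which keeps the effective exponent at most $\alpha_{n}$.

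\emph{Blow-up for $\alpha\ge 1$.} I would test with the family
$$u_{k}(x)=(1-\tau)^{1/n}\,m_{k}(x/R_{k}),\qquad R_{k}^{n}=\tau\bigl[(1-\tau)\|m_{k}\|_{n}^{n}\bigr]^{-1},$$
where $m_{k}$ is the classical Moser bubble on $B_{1}$ (with $\|\nabla m_{k}\|_{n}=1$) and $\tau\in(0,1)$ is free. A scaling check gives $\|u_{k}\|_{W^{1,n}}=1$, $\|u_{k}\|_{n}^{n}=\tau$, and after $y=x/R_{k}$ the functional reads
$$R_{k}^{n}\int_{B_{1}}\Phi\!\bigl(\alpha_{n}[(1-\tau)(1+\alpha\tau)]^{1/(n-1)}|m_{k}(y)|^{n/(n-1)}\bigr)dy.$$
For $\alpha>1$ the bracket strictly exceeds $1$ for every $\tau\in(0,(\alpha-1)/\alpha)$; the inner integral then diverges by classical super-critical Moser concentration, and the prefactor $R_{k}^{n}\to\infty$ amplifies the divergence.

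\emph{Main obstacle.} The decisive difficulty is the borderline case $\alpha=1$: now the bracket equals $1-\tau^{2}<1$, so the inner Moser integral is only sub-critical and decays like $(\log k)^{-1}$ against the prefactor $R_{k}^{n}\sim\log k$, producing a merely bounded total from the one-parameter test above. To force divergence at $\alpha=1$ I would replace $u_{k}$ by a two-scale test coupling a concentrated Moser bubble with an extremely spread-out $L^{n}$-heavy background $\psi$ whose ratio $\|\nabla\psi\|_{n}/\|\psi\|_{n}$ tends to zero in a controlled way along the sequence; tuned correctly, the amplification $(1+\alpha\|u_{k}\|_{n}^{n})^{1/(n-1)}/\|u_{k}\|_{W^{1,n}}^{n/(n-1)}$ attains the critical threshold in the limit and drives the integral to $+\infty$, analogous to the Moser-plus-first-eigenfunction trick of Adimurthi--Druet on bounded domains but with the ``eigenfunction'' replaced by a Poincar\'e-free spreading sequence.
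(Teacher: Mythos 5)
Your proof of the first part ($0\le\alpha<1$) is correct but proceeds by a genuinely different, more elementary route than the paper. Both of your suggestions work. The identity $\Phi(t)=\Phi^{*}(t)+t^{n-1}/(n-1)!$ is right, and the surplus integrates to at most $\alpha_{n}^{n-1}(1+\alpha)/(n-1)!$ on the unit sphere of $W^{1,n}(\mathbb{R}^{n})$, so the $\Phi$-inequality is an immediate corollary of the $\Phi^{*}$-inequality from \cite{J. M. do1}. Your self-contained symmetrization route also goes through, because for $\alpha<1$ the room $1-(1-\tau)(1+\alpha\tau)\ge(1-\alpha)\tau$ is \emph{linear} in $\tau=\|u\|_{n}^{n}$, while the boundary value $u(\delta)=O(\tau^{1/n})$; hence the cross term from $(v+u(\delta))^{n/(n-1)}$ (with $v=(u-u(\delta))^{+}$) can be absorbed by Young's inequality with a constant that stays bounded as $\tau\to 0$ --- precisely the place where things break at $\alpha=1$, since the room there is only $O(\tau^{2})$. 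The paper does not invoke \cite{J. M. do1}; it proves finiteness as a by-product of a much heavier program (radial maximizers of sub-critical functionals on expanding balls, concentration-compactness, and full blow-up analysis of the Euler--Lagrange equation), because that machinery is also what yields the existence of extremals in Theorem \ref{attain}. For finiteness alone your route is shorter.

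Your treatment of the second part ($\alpha\ge1$) has a concrete gap at the critical exponent $\alpha=1$, which you flag but do not close. Your analysis of the one-parameter family is correct: for fixed $\tau\in(0,1)$ the bracket $(1-\tau)(1+\tau)=1-\tau^{2}<1$, the rescaled Moser integral decays like $1/\log k$, and the prefactor $R_{k}^{n}\sim\log k$ yields only a bounded total. However, the paper's fix is not a two-scale Adimurthi--Druet type coupling; they note explicitly that the eigenfunction trick does not transfer to $\mathbb{R}^{n}$ since the Rayleigh quotient $\|\nabla u\|_{n}^{n}/\|u\|_{n}^{n}$ has infimum $0$ and no minimizer. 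Instead the paper keeps a single Moser bubble $u_{k}=m_{k}(\cdot/R_{k})$ and lets $\tau_{k}=\|u_{k}\|_{n}^{n}$ vanish with $k$ at a calibrated rate: with $R_{k}=(\log k)^{1/(2n)}/\log\log k$ one gets $\tau_{k}\sim C_{n}R_{k}^{n}/\log k\to 0$, so at $\alpha=1$ the sub-criticality penalty in the exponent is $\sim\tau_{k}^{2}\log k\sim C/(\log\log k)^{2n}\to 0$, while the volume prefactor $R_{k}^{n}=(\log k)^{1/2}/(\log\log k)^{n}$ still diverges. The missing idea is therefore much simpler than a second scale: make $\tau_{k}$ a vanishing sequence tuned so that $(\log k)^{-1}\ll\tau_{k}\ll(\log k)^{-1/2}$, which simultaneously forces $R_{k}^{n}\to\infty$ and $\tau_{k}^{2}\log k\to 0$. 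Your proposed Poincar\'e-free spreading background is plausible as an alternative but is not constructed, and you give no evidence that the coupling beats the linear-in-$\tau$ trade-off you already identified.
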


At this point, we call attention to the recent work of M. de Souza and J. M. do \'{O} in \cite{J. M. do1}, where the authors establish an analogue of (\ref{moser-Trudi}) under the additional assumption that  $\Phi\left(  t\right)$  is substituted by a smaller function $\Psi \left(  t\right)  =e^{t}-\underset{j=0}{\overset{n-1}{\sum}}%
\frac{t^{j}}{j!}$. But, they did not address whether the supremum is finite  when $\alpha=1$. Here, we remark that by using the test function sequence constructed in Section 2, we can show that the supreme in (\ref{moser-Trudi}) is infinity when $\alpha=1$. Therefore, our results indeed  improve substantially  the result in \cite{J. M. do1}.

We set
\[
S=\underset{\left\Vert u\right\Vert _{W^{1,n}\left(
\mathbb{R}
^{n}\right)  }=1}{\sup}\int_{%
\mathbb{R}
^{n}}\Phi\left(  \alpha_{n}\left\vert u\right\vert ^{\frac{n}{n-1}}\left(
1+\alpha\left\Vert u\right\Vert _{n}^{n}\right)  ^{\frac{1}{n-1}}\right)  dx.
\]
The existence of an extremal function for the above supremum is only known when $\alpha=0$ as shown in \cite{liruf}.
However, whether an  extremal function for the above supremum exists or not is
not known for $\alpha>0$. Our next aim is to show that the supremum above is
attained when $\alpha$ is chosen small enough, that is

\begin{theorem}
\label{attain}There exists $u_{\alpha}\in W^{1,n}\left(
\mathbb{R}
^{n}\right)  $ with $\left\Vert u_{\alpha}\right\Vert _{W^{1,n}\left(
\mathbb{R}
^{n}\right)  }=1$ such that
\[
S=\int_{%
\mathbb{R}
^{n}}\Phi\left(  \alpha_{n}\left\vert u_{\alpha}\right\vert ^{\frac{n}{n-1}%
}\left(  1+\alpha\left\Vert u_{\alpha}\right\Vert _{n}^{n}\right)  ^{\frac
{1}{n-1}}\right)  dx
\]
for sufficiently small $\alpha$.
\end{theorem}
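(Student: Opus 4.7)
The plan is to adapt the subcritical approximation plus blow-up analysis scheme that is by now standard for Trudinger-Moser extremal problems. First, for each small $\epsilon > 0$, I would replace the critical constant $\alpha_n$ by $\alpha_n - \epsilon$ in (\ref{moser-Trudi}) and apply the direct method of calculus of variations to the resulting subcritical variational problem. The subcriticality, combined with a P.-L. Lions type concentration-compactness estimate and Vitali's convergence theorem, guarantees that a maximizing sequence has uniformly integrable integrand, producing a maximizer $u_\epsilon \in W^{1,n}(\mathbb{R}^n)$ with $\|u_\epsilon\|_{W^{1,n}(\mathbb{R}^n)} = 1$. By symmetrization we may take $u_\epsilon \geq 0$ to be radially nonincreasing, so that $c_\epsilon := u_\epsilon(0)$ is the $L^\infty$ maximum. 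Taking variations subject to the unit-norm constraint yields an Euler-Lagrange equation involving the $n$-Laplacian together with lower order $L^n$ terms and a Lagrange multiplier $\lambda_\epsilon$.

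The next step is a dichotomy: either $(a)$ $c_\epsilon$ stays bounded as $\epsilon \to 0$, or $(b)$ $c_\epsilon \to \infty$ along a subsequence. In case $(a)$, standard elliptic regularity for the $n$-Laplacian and the radial decay estimate give strong $W^{1,n}$ convergence of $u_\epsilon$ to some $u_\alpha$; since the integrand is then uniformly $L^\infty$-bounded, dominated convergence shows $u_\alpha$ attains $S$. The core of the proof is to rule out case $(b)$ when $\alpha$ is sufficiently small. For this I would rescale $u_\epsilon$ around its peak at the natural scale $r_\epsilon$ dictated by the Euler-Lagrange equation and extract a bubble limit solving a Liouville-type equation for the $n$-Laplacian on $\mathbb{R}^n$. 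A Carleson-Chang type capacity estimate inside a small ball, combined with a Green-function-style representation controlling $u_\epsilon$ outside the concentration region, would then yield a sharp upper bound of the schematic form
\begin{equation*}
S \;\leq\; \frac{\omega_{n-1}}{n}\exp\!\Bigl(1+\tfrac12+\cdots+\tfrac{1}{n-1}\Bigr) \;+\; C_n \;+\; o(1) \qquad (\alpha\to 0^+).
\end{equation*}

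The final step is a test function comparison. I would adapt the Moser sequence $\phi_k$ already built in Section 2 (used there to show that $S = +\infty$ at $\alpha = 1$), renormalize to unit $W^{1,n}$ norm, and carefully compute the asymptotics of $\int_{\mathbb{R}^n}\Phi\bigl(\alpha_n|\phi_k|^{n/(n-1)}(1+\alpha\|\phi_k\|_n^n)^{1/(n-1)}\bigr)\,dx$. The key observation is that the $L^n$-coupling factor $(1+\alpha\|\phi_k\|_n^n)^{1/(n-1)}$ enlarges the exponent by an amount proportional to $\alpha$, and for suitably small but positive $\alpha$ the resulting value strictly exceeds the blow-up ceiling of the previous step. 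This contradicts case $(b)$, forces case $(a)$, and thereby produces the desired maximizer $u_\alpha$.

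The main obstacle is the derivation of the sharp blow-up upper bound: one must pin down the correct exponential constant via a Carleson-Chang type argument tailored to the $n$-Laplacian on the whole space, which is more delicate than the classical bounded-domain version because no Dirichlet boundary condition is available and the behavior of $u_\epsilon$ at infinity must be controlled through radial decay and a whole-space Green-function analysis. A further subtlety is the treatment of the coupling term $\alpha\|u_\epsilon\|_n^n$ under concentration: it does not simply vanish in the limit, and its asymptotic contribution must be quantified precisely in order to make the final comparison with the test-function value rigorous and to identify how small $\alpha$ must be chosen.
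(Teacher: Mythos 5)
Your overall scaffold (subcritical approximation, radial symmetrization, dichotomy on $c_\epsilon$, blow-up with Carleson--Chang, test-function comparison) matches the paper's architecture, and the paper also handles case $(a)$ (bounded $c_k$) essentially as you describe in Lemma \ref{attain lemma2}. But there are two genuine gaps in your plan for case $(b)$, and the second one is fatal.

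First, your schematic blow-up ceiling
$S \le \frac{\omega_{n-1}}{n}\exp\bigl(1+\tfrac12+\cdots+\tfrac1{n-1}\bigr)+C_n+o(1)$
has the wrong structure. What the Carleson--Chang argument actually delivers here (Proposition \ref{attain lemma}) is the multiplicative form
$S\le \frac{\omega_{n-1}}{n}\exp\bigl\{\alpha_n A + 1 + \tfrac12 + \cdots + \tfrac1{n-1}\bigr\}$,
where $A$ is the additive constant in the expansion $G_\alpha=-\frac{n}{\alpha_n}\log r + A + O(r^n\log^n r)$ of the whole-space Green function from Lemma \ref{tend to G}. The contribution of the ``tail'' of $u_k$ away from the concentration point does not appear as a separate additive $C_n$; it is absorbed into $\exp\{\alpha_n A\}$ after one tracks how $u_k(\delta)$ shifts the exponent. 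Without this precise exponential form the comparison step has nothing to latch onto, because $A$ depends on $\alpha$ and on the structure of $G_\alpha$ in a way you cannot estimate a priori.

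Second, and this is the fatal step, you propose to exceed the blow-up ceiling by renormalizing the Moser sequence of Section 2. That cannot work. The Section 2 sequence $u_k$ has $\|u_k\|_n^n\to 0$ and its purpose was only to show divergence at $\alpha=1$; for fixed $\alpha<1$ it gives a lower bound that does not involve the constant $A$ at all, hence cannot beat a threshold of the form $\frac{\omega_{n-1}}{n}\exp\{\alpha_n A + E\}$. The test function in Section 5 of the paper is built by gluing a truncated Moser-type concentration profile inside $B_{R\varepsilon}$ to a \emph{rescaled copy of $G_\alpha$} outside $B_{R\varepsilon}$; the Green-function outer part is precisely what reproduces the factor $\exp\{\alpha_n A\}$, while the extra ``mass'' contributed by the $t^{n-1}/(n-1)!$ term of $\Phi$ on the outer region (the positive term $\frac{\alpha_n^{n-1}\|G_\alpha\|_n^n}{(n-1)!\,C^{n/(n-1)}}$) beats the deficit $-\frac{\alpha_n\alpha^2\|G_\alpha\|_n^{2n}}{(n-1)C^{n/(n-1)}}$ introduced by the $(1+\alpha\|u_\varepsilon\|_n^n)^{1/(n-1)}$ coupling precisely when $\alpha$ is small (since the deficit scales like $\alpha^2$). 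That cancellation of $\exp\{\alpha_n A\}$ and the $\alpha^2$-versus-$\alpha^0$ comparison at order $C^{-n/(n-1)}$ is exactly where ``$\alpha$ sufficiently small'' enters. Your plan, by using the $A$-blind Moser sequence, cannot produce that cancellation and cannot identify where the smallness of $\alpha$ is used. A minor additional remark: the paper's subcritical approximation varies both the constant $\beta_k\uparrow\alpha_n$ and the domain $B_{R_k}\uparrow\mathbb{R}^n$; with only $\alpha_n-\epsilon$ on all of $\mathbb{R}^n$ you still need an argument (Lemma 3.2 plus Lemma \ref{jmdo}) to rule out loss of compactness by vanishing, so that step also needs to be filled in.
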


The first result about existence of the extremal function for
Trudinger-Moser inequality was given by L. Carleson and S.Y.A. Chang
in \cite{c-c}, where it is proved that the supremum in
(\ref{moser-tru}) indeed has
extremals by using symmetrization argument, when $\Omega$ is a ball in $%
\mathbb{R}
^{n}$. This actually brings a surprise, since it is well-known that the
Sobolev-inequality has no extremals on any finite domain $\Omega\neq$\ $%
\mathbb{R}
^{n}$. Later, M. Flucher \cite{Flucher} showed that this result continues to
hold for any smooth domain in $%
\mathbb{R}
^{2}$ and Lin in \cite{lin} generalized the result to any dimension.
More existence results can be found in several papers, see
e.g. Y.X. Li \cite{Li2} and \cite{Li 1} for Trudinger-Moser inequalities on
compact Riemannian manifold, \cite{ruf} and \cite{liruf} for on
unbounded domains in $\mathbb{R}^n$, and Lu and Yang \cite{lu-yang},\cite{lu-yang 1} and
Zhu \cite{zhu} for Trudinger-Moser inequalities involving a remainder
term. For the existence of critical points for the supercritical regime, i.e. the Trudinger-Moser energy functionals constrained to manifold $M=  \left\{  {u\in W^{1,n}_0(\Omega), \left\Vert \nabla u\right\Vert _{L^n((\Omega))}>1}\right\}$, see del Pino,  Musso and  Ruf  \cite{de} and Malchiodi and Martinazzi \cite{Malchiodi}, and references therein.

\medskip

We now sketch the idea of proving Theorem \ref{moser-trudinger} and
Theorem \ref{attain}.

\medskip

1. The proof of the second part of Theorem \ref{moser-trudinger} is
based on a test function argument. Unlike in the case for  bounded domains
\cite{yang}, we cannot construct the test function by the
eigenfunction of the first eigenvalue problem: $$\underset {u\in
W_{0}^{1,n}\left(  \Omega\right)  ,u\neq0}{\inf}\frac{\left\Vert
\nabla u\right\Vert _{n}^{n}}{\left\Vert u\right\Vert _{n}^{n}},$$
since the above infimum is actually not attained when $\Omega=\mathbb{R}
^{n}$.  To overcome this difficulty, we
will construct a new test function sequence (see Section 2
for more details).

\medskip

2. For the proof of the first part of Theorem
\ref{moser-trudinger}, we will  carry out the standard blowing up
analysis procedure. This method is based on a blowing up analysis of
sequences of solutions to $n$-Laplacian in $\mathbb{R}
^{n}$ with exponential growth, and it has been successfully applied in
the proof of the Trudinger-Moser inequalities and related existence
results in bounded domains (see
\cite{Adimurthi},\cite{zhu},\cite{lu-yang} and \cite{lu-yang 1}). In the unbounded case, one will encounter many new difficulties. For
instance, when the blowing up phenomenon arises, a crucial step is to show the strong
convergence of $u_k$ in $L^n$ norm ($u_k$ are the  maximizers for a sequence of subcritical Trudinger-Moser energy functionals). We recall that in  \cite{J. M. do1},  the authors proved the strong convergence under the additional assumption that  $\Phi\left(  t\right)$  is substituted by a smaller function $\Psi \left(  t\right)  =e^{t}-\underset{j=0}{\overset{n-1}{\sum}}%
\frac{t^{j}}{j!}$. In our case, we remove that unnatural assumption, and in order to prove the strong convergence, we will need more careful analysis and different technique (see $\S 4.1$ for more details)

\medskip

 3. To prove Theorem \ref{attain},
we will adapt ideas in the spirit of    proofs given in, e.g.,  Y. X. Li \cite{Li2, Li 1} and Y. X. Li and B. Ruf in
\cite{liruf}. We first derive the upper bound for the
Trudinger-Moser inequality from a result of L. Carleson and S.Y.A.
Chang \cite{c-c} when the blowing up arises, and then construct a
function sequence to show that the upper bound can actually be
surpassed.

\medskip

This paper is organized as follows. In Section 2 we prove the sharpness
of the inequality in Theorem \ref{moser-trudinger} by constructing a
appropriate test function sequence; Section 3 is devoted to proving the
existence of radially symmetric maximizing sequence for the critical
functional; in Section 4, we apply the blowing up analysis to
analyze the asymptotic behavior of the maximizing sequence near and
far away from the origin, and give the proof for the first part of
Theorem \ref{moser-trudinger}; in Section 5, we prove the existence
result---Theorem \ref{attain} by constructing a test function
sequence.

\medskip

Throughout this paper, the letter $c$ denotes a constant which may
vary from line to line.

\section{\bigskip The test functions argument}

In this section, we prove the sharpness of the inequality in Theorem
\ref{moser-trudinger}. Namely, we will show that if $\alpha\geq1$, then the sumpremum is infinity.

\begin{proof}
[Proof of the Second Part of Theorem \ref{moser-trudinger}]
Setting
\[
u_{k}=\frac{1}{\omega_{n-1}^{\frac{1}{n}}}\left\{
\begin{array}
[c]{c}%
\left(  \log k\right)  ^{-\frac{1}{n}}\ \log\frac{R_{k}}{\left\vert
x\right\vert }\text{ \ \ \ \ }\frac{R_{k}}{k}<\left\vert x\right\vert \leq
R_{k}\\
\left(  \log k\right)  ^{\frac{n-1}{n}}\ \ \text{\ \ \ \ \ \ \ \ \ \ \ }%
0<\left\vert x\right\vert \leq\frac{R_{k}}{k}%
\end{array}
\right.
\]
where $R_{k}:=\frac{\left(  \log k\right)  ^{1/2n}}{\log\log k}\rightarrow
\infty$, as $k\rightarrow\infty$. We can easily verify that
\[
\int_{%
\mathbb{R}
^{n}}\left\vert \nabla u_{k}\right\vert ^{n}dx=1
\]
and%

\begin{align*}
\left\Vert u_{k}\right\Vert _{n}^{n} &  =\left(  \int_{B_{R_{k}/k}}%
+\int_{B_{R_{k}}\backslash B_{R_{k}/k}}\right)  \left\vert u_{k}\right\vert
^{n}dx\\
&  =\frac{\left(  \log k\right)  ^{n-1}}{n}\left(  \frac{R_{k}}{k}\right)
^{n}+\frac{R_{k}^{n}}{\log k}\int_{\frac{1}{k}}^{1}\left(  \log r\right)
^{n}r^{n-1}dr\\
&  =\frac{C_{n}R_{k}^{n}}{\log k}\left(  1+o\left(  1\right)  \right)
\rightarrow0\text{ as }k\rightarrow\infty,
\end{align*}
where $C_{n}=\int_{\frac{1}{k}}^{1}\left(  \log r\right)  ^{n}r^{n-1}dr$.
Therefore we have%

\[
\left\Vert u_{k}\right\Vert _{W^{1,n}\left(
\mathbb{R}
^{n}\right)  }^{n}=1+\frac{C_{n}R_{k}^{n}}{\log k}\left(  1+o\left(  1\right)
\right)
\]
$\ $

Since%
\[
1+\frac{\left\Vert u_{k}\right\Vert _{n}^{n}}{\left\Vert u_{k}\right\Vert
_{W^{1,n}}^{n}}=\frac{1+2\left\Vert u_{k}\right\Vert _{n}^{n}}{1+\left\Vert
u_{k}\right\Vert _{n}^{n}},
\]
then on the ball $B_{R_{k}/k}$, we have
\begin{align*}
&  \alpha_{n}\frac{\left\vert u_{k}\right\vert ^{\frac{n}{n-1}}}{\left\Vert
u_{k}\right\Vert _{W^{1,n}\left(
\mathbb{R}
^{n}\right)  }^{\frac{n}{n-1}}}\left(  1+\frac{\left\Vert u_{k}\right\Vert
_{n}^{n}}{\left\Vert u_{k}\right\Vert _{W^{1,n}\left(
\mathbb{R}
^{n}\right)  }^{n}}\right)  ^{\frac{1}{n-1}}\\
&  =n\omega_{n-1}^{\frac{1}{n-1}}\left\vert u_{k}\right\vert ^{\frac{n}{n-1}%
}\frac{\left(  1+2\left\Vert u_{k}\right\Vert _{n}^{n}\right)  ^{\frac{1}%
{n-1}}}{\left(  1+\left\Vert u_{k}\right\Vert _{n}^{n}\right)  ^{\frac{2}%
{n-1}}}\\
&  =n\log k\left(  1-\frac{1}{n-1}\left\Vert u_{k}\right\Vert _{n}^{2n}%
+\frac{2}{n-1}\left\Vert u_{k}\right\Vert _{n}^{3n}\left(  1+o\left(
1\right)  \right)  \right)
\end{align*}
and $\left\vert B_{R_{k}/k}\right\vert =\frac{\omega_{n-1}}{n}\exp\left(
n\log R_{k}-n\log k\right)  $.

\bigskip Thus%
\begin{align*}
&  \underset{\left\Vert u\right\Vert _{W^{1,n}\left(
\mathbb{R}
^{n}\right)  }=1}{\sup}\int_{%
\mathbb{R}
^{n}}\Phi\left(  \alpha_{n}\left\vert u\right\vert ^{\frac{n}{n-1}}\left(
1+\left\Vert u\right\Vert _{n}^{n}\right)  \right)  dx\\
&  \geq c\int_{B_{R_{k}/k}}\exp\left(  \alpha_{n}\frac{\left\vert
u_{k}\right\vert ^{\frac{n}{n-1}}}{\left\Vert u_{k}\right\Vert _{W^{1,n}%
}^{\frac{n}{n-1}}}\left(  1+\frac{\left\Vert u_{k}\right\Vert _{n}^{n}%
}{\left\Vert u_{k}\right\Vert _{W^{1,n}}^{n}}\right)  \right)  dx\\
&  \geq c\exp\left(  n\log k\left(  1-\frac{1}{n-1}\left\Vert u_{k}\right\Vert
_{n}^{2n}+\frac{2}{n-1}\left\Vert u_{k}\right\Vert _{n}^{3n}\left(  1+o\left(
1\right)  \right)  \right)  +n\log R_{k}-n\log k\right)  \\
&  =c\exp\left(  n\log k\left(  -\frac{1}{n-1}\left\Vert u_{k}\right\Vert
_{n}^{2n}+\frac{2}{n-1}\left\Vert u_{k}\right\Vert _{n}^{3n}\left(  1+o\left(
1\right)  \right)  \right)  +n\log R_{k}\right)
\end{align*}

 Because
\[
n\log R_{k}=n\log\left(  \frac{\left(  \log k\right)  ^{1/2n}}{\log\log
k}\right)  =\frac{1}{2}\log\log k-n\log\log\log k
\]
and$\ $
\begin{align*}
&  n\log k\left(  -\frac{1}{n-1}\left\Vert u_{k}\right\Vert _{n}^{2n}+\frac
{2}{n-1}\left\Vert u_{k}\right\Vert _{n}^{3n}\left(  1+o\left(  1\right)
\right)  \right)  \\
&  =\frac{-n}{n-1}\frac{C_{n}^{2}R_{k}^{2n}}{\log k}\left(  1+o\left(
1\right)  \right)  \\
&  =\frac{-n}{n-1}C_{n}^{2}\frac{1}{\left(  \log\log k\right)  ^{2n}}\left(
1+o\left(  1\right)  \right)  ,
\end{align*}
we can get
\begin{align*}
&  \int_{%
\mathbb{R}
^{n}}\Phi\left(  \alpha_{n}\left\vert u_{k}\right\vert ^{\frac{n}{n-1}}\left(
1+\left\Vert u_{k}\right\Vert _{n}^{n}\right)  \right)  dx\\
&  \geq c\exp\left(  n\log k\left(  -\left\Vert u_{k}\right\Vert _{n}%
^{2n}\left(  1+o\left(  1\right)  \right)  \right)  +n\log R_{k}\right)  \\
&  =c\exp\left(  \frac{1}{2}\log\log k-n\log\log\log k-\frac{nC_{n}^{2}}%
{n-1}\frac{1}{\left(  \log\log k\right)  ^{2n}}\left(  1+o\left(  1\right)
\right)  \right)  \\
&  \rightarrow\infty.
\end{align*}
The proof is finished.
\end{proof}

\section{The maximizing sequence for critical functional}

We first present a technical lemma contributed by Jo\~{a}o Marco do \'{O}, et
al \cite{J. M. do}.

\begin{lemma}
\label{jmdo}Let $\left\{  u_{k}\right\}  $ be a sequence in $W^{1,n}\left(
\mathbb{R}
^{n}\right)  $ such that $\left\Vert u_{k}\right\Vert _{W^{1,n}}=1$ and
$u_{k}\rightarrow u\neq0$, weakly in $W^{1,n}\left(
\mathbb{R}
^{n}\right)  $. If
\[
0<q<q_{n}\left(  u\right)  :=\frac{1}{\left(  1-\left\Vert u\right\Vert
_{W^{1,n}}^{n}\right)  ^{1/\left(  n-1\right)  }},
\]
then%
\[
\underset{k}{\sup}\int_{%
\mathbb{R}
^{n}}\Phi\left(  \alpha_{n}q\left\vert u_{k}\right\vert ^{\frac{n}{n-1}%
}\right)  dx<\infty.
\]

\end{lemma}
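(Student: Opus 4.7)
The plan is a decomposition argument combining the Li--Ruf Trudinger--Moser inequality on $\mathbb{R}^n$ from \cite{liruf} with a Brezis--Lieb type expansion of the Sobolev norm. Fix a small $\epsilon > 0$, and by density of $C_c^\infty$ in $W^{1,n}$ choose $\varphi \in C_c^\infty(\mathbb{R}^n)$ with $\|u-\varphi\|_{W^{1,n}} \le \delta$ for a small $\delta > 0$ to be fixed later. Applying the elementary inequality $(a+b)^{n/(n-1)} \le (1+\epsilon)\,a^{n/(n-1)} + C(\epsilon)\,b^{n/(n-1)}$ to $a = |u_k - \varphi|$, $b = |\varphi|$, together with conjugate exponents $p>1$ close to $1$ and $p'$, rewrites the quantity inside $\Phi$ as a convex combination:
\[
\alpha_n q\,|u_k|^{n/(n-1)} \;\le\; \tfrac{1}{p}A_k + \tfrac{1}{p'}B,
\]
where $A_k := p(1+\epsilon)\alpha_n q\,|u_k - \varphi|^{n/(n-1)}$ and $B := p'C(\epsilon)\alpha_n q\,|\varphi|^{n/(n-1)}$. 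The convexity and monotonicity of $\Phi$ then give the pointwise bound
\[
\Phi\bigl(\alpha_n q\,|u_k|^{n/(n-1)}\bigr) \;\le\; \tfrac{1}{p}\Phi(A_k) + \tfrac{1}{p'}\Phi(B).
\]

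Integrating, $\int\Phi(B)\,dx$ is a finite constant because $\varphi$ is bounded and compactly supported. The core task is the uniform bound on $\int_{\mathbb{R}^n}\Phi(A_k)\,dx$. For this I would pass to an almost-everywhere convergent subsequence (Rellich--Kondrachov for $u_k$, plus an extraction to handle the gradients) and invoke the Brezis--Lieb lemma componentwise, which together with $\|u_k\|_{W^{1,n}}^n=1$ yields
\[
\|u_k - \varphi\|_{W^{1,n}}^n \longrightarrow \|u-\varphi\|_{W^{1,n}}^n + 1 - \|u\|_{W^{1,n}}^n \;\le\; \delta^n + 1 - \|u\|_{W^{1,n}}^n.
\]
Setting $w_k := (u_k-\varphi)/\|u_k-\varphi\|_{W^{1,n}}$ so that $\|w_k\|_{W^{1,n}}=1$, the Li--Ruf inequality gives $\sup_k \int \Phi(\alpha_n |w_k|^{n/(n-1)})\,dx < \infty$. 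By monotonicity of $\Phi$ it is enough to arrange
\[
p(1+\epsilon)\,q\,\|u_k-\varphi\|_{W^{1,n}}^{n/(n-1)} \;\le\; 1
\]
for all large $k$, and since $q < q_n(u) = (1-\|u\|_{W^{1,n}}^n)^{-1/(n-1)}$, this is achievable by choosing $\delta$, $\epsilon$, and $p-1$ sufficiently small.

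The main technical obstacle is the Brezis--Lieb expansion for the gradient part of the $W^{1,n}$ norm, since weak convergence in $W^{1,n}$ does not on its own force $\nabla u_k \to \nabla u$ almost everywhere. One remedy is to pass to a further subsequence in which this holds; alternatively, one can truncate $u_k$ at a large level $M$, apply the above argument to the truncation, and then let $M\to\infty$, using that $\Phi$ is subadditive up to a multiplicative constant. A secondary delicate point is the simultaneous tuning of the three small parameters $\delta$, $\epsilon$, $p-1$ so that the exponential exponent in $A_k$ stays strictly below the Li--Ruf critical threshold for all sufficiently large $k$.
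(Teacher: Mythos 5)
The paper does not give a proof of Lemma~\ref{jmdo}; it is quoted from \cite{J. M. do}, so there is no in-text proof to compare against. Your overall template---peel off an approximation of $u$, control the remainder via the Li--Ruf inequality after tuning $\epsilon$, $p$, $\delta$---is the right one, and the pointwise manipulations ($(1+\epsilon)$ inequality, Young's inequality, convexity and monotonicity of $\Phi$, finiteness of $\int\Phi(B)$) are all sound. The genuine gap is the Brezis--Lieb step. You need $\limsup_k\|u_k-\varphi\|^n_{W^{1,n}}\le\delta^n+1-\|u\|^n_{W^{1,n}}$ and obtain it by applying Brezis--Lieb to $\nabla u_k$ as well as to $u_k$. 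Brezis--Lieb for the gradient requires $\nabla u_k\to\nabla u$ a.e., and weak $L^n$ convergence of $\nabla u_k$ does \emph{not} provide this, even after extraction: for a genuinely oscillating gradient (e.g.\ one containing a $\sin(kx_1)$ factor) \emph{no} subsequence converges a.e.\ to anything, since a bounded, a.e.\ convergent subsequence would converge strongly in $L^p_{loc}$ for $p<n$ and would then have to converge to the weak limit, a contradiction. So your primary remedy, ``pass to a further subsequence in which this holds,'' is unavailable; and without it, weak lower semicontinuity only gives $\liminf_k\|\nabla u_k\|_n^n\ge\|\nabla u\|_n^n$, which bounds $\|\nabla(u_k-\varphi)\|_n$ from the wrong side.

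The correct fix---which your ``alternatively, truncate $u_k$'' aside gestures at but does not carry out, and whose mechanism is not subadditivity of $\Phi$---is to truncate $u_k$ itself instead of approximating $u$ by a smooth $\varphi$. With $u_k^M:=\min(|u_k|,M)\,\mathrm{sgn}(u_k)$ the gradients of $u_k^M$ and of $u_k-u_k^M$ are supported a.e.\ on the complementary level sets $\{|u_k|<M\}$ and $\{|u_k|>M\}$, so $\|\nabla u_k^M\|_n^n+\|\nabla(u_k-u_k^M)\|_n^n=\|\nabla u_k\|_n^n$ exactly, and the elementary inequality $(|u_k|-M)_+^n+\min(|u_k|,M)^n\le|u_k|^n$ then yields
\[
\|u_k-u_k^M\|^n_{W^{1,n}}\le 1-\|u_k^M\|^n_{W^{1,n}}.
\]
Since $u_k^M\rightharpoonup u^M:=\min(|u|,M)\,\mathrm{sgn}(u)$ weakly in $W^{1,n}$ (from $u_k\to u$ a.e.\ and boundedness), weak lower semicontinuity of the $W^{1,n}$ norm---no a.e.\ convergence of gradients required---gives $\limsup_k\|u_k-u_k^M\|^n_{W^{1,n}}\le 1-\|u^M\|^n_{W^{1,n}}$, and the right side tends to $1-\|u\|^n_{W^{1,n}}$ as $M\to\infty$. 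Feeding $|u_k|\le(|u_k|-M)_++M$ into the same $(1+\epsilon)$/Young/Li--Ruf machinery you set up then closes the argument. In short: truncation is not an optional alternative; it is what lets you replace Brezis--Lieb by weak lower semicontinuity, which is the step your smooth-approximation route cannot supply.
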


Let $\left\{  R_{k}\right\}  $ be an increasing sequence which diverges to
infinity, and $\left\{  \beta_{k}\right\}  $ an increasing sequence which
converges to $\alpha_{n}$. Setting
\[
I_{\beta_{k}}^{\alpha}\left(  u\right)  =\int_{B_{R_{k}}}\Phi\left(  \beta
_{k}\left\vert u\right\vert ^{\frac{n}{n-1}}\left(  1+\alpha\left\Vert
u\right\Vert _{{n}}^{{n}}\right)  ^{\frac{1}{n-1}}\right)  dx
\]
and
\[
H=\left\{  \left.  u\in W_{0}^{1,n}\left(  B_{R_{k}}\right)  \right\vert
\left\Vert u\right\Vert _{W^{1,n}}=1\right\}  .
\]

We have

\begin{lemma}
For any $0\leq\alpha \leq 1$, there exists an extremal function $u_{k}\in H$
such that%
\[
I_{\beta_{k}}^{\alpha}\left(  u_{k}\right)  =\underset{u\in H}{\sup}%
I_{\beta_{k}}^{\alpha}\left(  u\right)
\]

\end{lemma}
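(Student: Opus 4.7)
The plan is to work on the bounded domain $B_{R_k}$, where compactness of the Sobolev embedding into $L^q(B_{R_k})$ for every finite $q$ makes the direct method available. I would start with a maximizing sequence $\{v_j\}\subset H$ for $I^{\alpha}_{\beta_k}$, extract a weakly convergent subsequence $v_j\rightharpoonup u_k$ in $W^{1,n}_0(B_{R_k})$ by reflexivity, and use Rellich--Kondrachov to upgrade to $v_j\to u_k$ strongly in $L^q(B_{R_k})$ for every $q<\infty$ and almost everywhere (after a further subsequence). In particular $\|v_j\|_n\to\|u_k\|_n$, and weak lower semicontinuity yields $\|u_k\|_{W^{1,n}}\le 1$.

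The main step is to pass the nonlinear functional to the limit, i.e.\ to prove that $I^{\alpha}_{\beta_k}(v_j)\to I^{\alpha}_{\beta_k}(u_k)$. I would do this via Vitali's convergence theorem, which requires a uniform $L^p$-bound on the integrands for some $p>1$. The crucial computation is to renormalize $\tilde v_j:=v_j/\|\nabla v_j\|_n$, so that $\|\nabla\tilde v_j\|_n=1$; using $\|\nabla v_j\|_n^n=1-\|v_j\|_n^n$, the exponent appearing in $\Phi$ rewrites as
\[
\beta_k\bigl[(1-\|v_j\|_n^n)(1+\alpha\|v_j\|_n^n)\bigr]^{1/(n-1)}|\tilde v_j|^{n/(n-1)}.
\]
For $0\le\alpha\le 1$ and $s\in[0,1]$, the elementary inequality $(1-s)(1+\alpha s)\le 1$ holds, so the prefactor is at most $\beta_k$. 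Since $\beta_k<\alpha_n$, I can choose $p\in(1,\alpha_n/\beta_k)$, and the classical Trudinger--Moser inequality (\ref{moser-tru}) on the bounded domain $B_{R_k}$ applied to $\tilde v_j$ provides the required uniform bound on $\exp\bigl(p\beta_k|\tilde v_j|^{n/(n-1)}\bigr)$. Vitali then yields $I^{\alpha}_{\beta_k}(v_j)\to I^{\alpha}_{\beta_k}(u_k)$, so $u_k$ attains the supremum.

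Finally, I would verify $u_k\in H$. The case $u_k\equiv 0$ is excluded because the above convergence would otherwise force $\sup_H I^{\alpha}_{\beta_k}=0$, contradicting the positivity of the functional on any nonzero element of $H$. If instead $0<\|u_k\|_{W^{1,n}}<1$, then the rescaled function $\lambda u_k$ with $\lambda=1/\|u_k\|_{W^{1,n}}>1$ lies in $H$; strict monotonicity of the integrand in $\lambda$ (both $\lambda^{n/(n-1)}|u_k|^{n/(n-1)}$ and $1+\alpha\lambda^{n}\|u_k\|_n^{n}$ increase, while $\Phi$ is strictly increasing on $(0,\infty)$) produces a strictly larger functional value, contradicting maximality. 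Hence $\|u_k\|_{W^{1,n}}=1$, completing the existence of the extremal.

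The main obstacle is the uniform $L^p$-bound required for Vitali, and this is exactly where the hypothesis $\alpha\le 1$ enters essentially through $(1-s)(1+\alpha s)\le 1$. For $\alpha>1$ this product can exceed $1$, so that the effective exponent $\beta_k[(1-s)(1+\alpha s)]^{1/(n-1)}$ could surpass $\alpha_n$ and destroy subcriticality; the compatibility of the weight $(1+\alpha\|u\|_n^n)^{1/(n-1)}$ with the Sobolev constraint $\|u\|_{W^{1,n}}=1$ in the range $\alpha\le 1$ is the real engine behind the compactness.
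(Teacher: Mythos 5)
Your proof is correct, and it takes a genuinely different route from the paper's. The paper first extracts the weak limit $u_k$, shows $u_k\neq 0$, and then invokes the Lions-type concentration--compactness improvement (Lemma \ref{jmdo}, due to do \'{O} et al.) to get the $L^s$ bound on the integrands; the hypothesis $\alpha\le 1$ enters there through the chain $1+\alpha\|u_k\|_n^n\le 1+\|u_k\|_{W^{1,n}}^n<\frac{1}{1-\|u_k\|_{W^{1,n}}^n}$, which places the effective exponent strictly below the Lions threshold $q_n(u_k)$. You instead renormalize each term of the maximizing sequence by its Dirichlet norm, $\tilde v_j=v_j/\|\nabla v_j\|_n$, so that the exponent becomes $\beta_k\bigl[(1-\|v_j\|_n^n)(1+\alpha\|v_j\|_n^n)\bigr]^{1/(n-1)}|\tilde v_j|^{n/(n-1)}$, and then use the elementary inequality $(1-s)(1+\alpha s)\le 1$ for $\alpha\le 1$, $s\in[0,1]$, to bring the effective constant below $\alpha_n$. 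This reduces everything to the classical Moser--Trudinger inequality on $B_{R_k}$ applied to $\tilde v_j$ with $\|\nabla\tilde v_j\|_n=1$, so you bypass Lemma \ref{jmdo} entirely and need no separate argument that $u_k\neq 0$ to obtain the uniform bound (you still use $u_k\neq 0$, but only in the final normalization step). A small further difference: you explicitly rule out $\|u_k\|_{W^{1,n}}<1$ by the rescaling argument $\lambda=1/\|u_k\|_{W^{1,n}}$ and strict monotonicity of $\Phi$, whereas the paper merely asserts $\|u_k\|_{W^{1,n}}=1$. Your argument is more elementary and self-contained; the paper's has the advantage that Lemma \ref{jmdo} is the uniform tool reused elsewhere in the paper (e.g.\ in Lemma \ref{lions}), so its proof of this lemma fits the overall framework. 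Both proofs use $\alpha\le 1$ essentially, just encoded in different algebraic forms of the same compatibility between the $(1+\alpha\|u\|_n^n)^{1/(n-1)}$ weight and the constraint $\|u\|_{W^{1,n}}=1$, and your observation that $(1-s)(1+\alpha s)$ can exceed $1$ when $\alpha>1$ correctly pinpoints where the argument would break down.
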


\begin{proof}
There exists a sequence of $\left\{  v_{i}\right\}  \in H$ such that
\[
\underset{i\rightarrow\infty}{\lim}I_{\beta_{k}}^{\alpha}\left(  v_{i}\right)
=\underset{u\in H}{\sup}I_{\beta_{k}}^{\alpha}\left(  u\right)  .
\]
Since $v_{i}$ is bounded in $W^{1,n}\left(
\mathbb{R}
^{n}\right)  $, there exists a subsequence which will still be denoted by $v_{i}$,
such that%
\[%
\begin{array}
[c]{c}%
v_{i}\rightarrow u_{k}\text{ weakly in }W^{1,n}\left(
\mathbb{R}
^{n}\right)  ,\\
v_{i}\rightarrow u_{k}\text{ strongly in }L^{s}\left(
B_{R_{k}}\right) ,
\end{array}
\]
for any $1<s<\infty$ as $i\rightarrow\infty$. Hence $v_{i}\rightarrow u_{k}$
a.e. in $%
\mathbb{R}
^{n}$, and%
\begin{align*}
g_{i} &  =\Phi\left\{  \beta_{k}\left\vert v_{i}\right\vert ^{\frac{n}{n-1}%
}\left(  1+\alpha\left\Vert v_{i}\right\Vert _{{n}}^{{n}}\right)  ^{\frac
{1}{n-1}}\right\}  \\
&  \rightarrow g_{k}=\Phi\left\{  \beta_{k}\left\vert u_{k}\right\vert
^{\frac{n}{n-1}}\left(  1+\alpha\left\Vert u_{k}\right\Vert _{{n}}^{{n}%
}\right)  ^{\frac{1}{n-1}}\right\}
\end{align*}
a.e. in $%
\mathbb{R}
^{n}$. We claim that $u_{k}\neq0$. If not, $1+\alpha\left\Vert v_{i}%
\right\Vert _{{n}}^{{n}}\rightarrow1$, and then $g_{i}$ is bounded in
$L^{r}\left(  B_{R_{k}}\right)  $ for some $r>1$, thus $g_{i}\rightarrow0$.
Therefore, $\underset{u\in H}{\sup}I_{\beta_{k}}^{\alpha}\left(  u\right)
=0$, which is impossible. \ By Lemma \ref{jmdo}, we have\ for any
$q<q_{n}\left(  u_{k}\right)  :=\frac{1}{\left(  1-\left\Vert u_{k}\right\Vert
_{W^{1,n}}\right)  ^{1/\left(  n-1\right)  }}$,%
\[
\underset{i\rightarrow\infty}{\lim\sup}\int_{%
\mathbb{R}
^{n}}\Phi\left(  \alpha_{n}q\left\vert v_{i}\right\vert ^{\frac{n}{n-1}%
}\right)  dx<\infty.
\]
Since $\alpha \leq 1$, we have%
\[
1+\alpha\left\Vert u_{k}\right\Vert _{{n}}^{{n}}<1+\left\Vert u_{k}\right\Vert
_{W^{1,n}}^{n}<\frac{1}{1-\left\Vert u_{k}\right\Vert _{W^{1,n}}^{n}}%
=q_{n}\left(  u_{k}\right)  ,
\]
then $g_{i}$ is bounded in $L^{s}$ for some $s>1$, and $g_{i}\rightarrow
g_{k}$ strongly in $L^{1}\left(  B_{R_{k}}\right)  $, as $i\rightarrow\infty
$\emph{. }Therefore, the extremal function is attained for the case $\beta
_{k}<\alpha_{n}$ and $\left\Vert u_{k}\right\Vert _{W^{1,n}}=1$.
\end{proof}
Similar as in \cite{liruf}, we give the following
\begin{lemma}
Let $u_{k}$ be as above. Then

(i) $u_{k}$ is a maximizing sequence for $S;$

(ii) $u_{k}$ may be chosen to be radially symmetric and decreasing.
\end{lemma}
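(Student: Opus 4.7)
The plan is to prove (i) by sandwiching $I_{\beta_k}^\alpha(u_k)$ between $S$ from above (trivial since $\beta_k<\alpha_n$) and $S$ from below (via a test-function/Fatou argument), and to prove (ii) by applying the Schwarz symmetric decreasing rearrangement and then renormalizing back into $H$.

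For the upper bound in (i), since $u_k\in W_0^{1,n}(B_{R_k})\subset W^{1,n}(\mathbb{R}^n)$ has unit norm and $\beta_k<\alpha_n$, monotonicity of $\Phi$ yields
\begin{equation*}
I_{\beta_k}^\alpha(u_k)\le\int_{\mathbb{R}^n}\Phi\bigl(\alpha_n|u_k|^{\frac{n}{n-1}}(1+\alpha\|u_k\|_n^n)^{\frac{1}{n-1}}\bigr)\,dx\le S.
\end{equation*}
For the matching lower bound, I fix any $v\in W^{1,n}(\mathbb{R}^n)$ with $\|v\|_{W^{1,n}}=1$ and a smooth cutoff $\chi_k\equiv 1$ on $B_{R_k/2}$, supported in $B_{R_k}$, with $|\nabla\chi_k|\le C/R_k$, and set $v_k:=\chi_k v/\|\chi_k v\|_{W^{1,n}}\in H$. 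A short computation using $v\in L^n\cap \dot W^{1,n}$ shows $v_k\to v$ strongly in $W^{1,n}$ and hence $\|v_k\|_n\to\|v\|_n$, so the integrand of $I_{\beta_k}^\alpha(v_k)$ converges a.e.\ along any a.e.-convergent subsequence to $\Phi(\alpha_n|v|^{n/(n-1)}(1+\alpha\|v\|_n^n)^{1/(n-1)})$. Nonnegativity of $\Phi$ and Fatou's lemma, together with a standard subsequence-of-subsequences argument, then give
\begin{equation*}
\int_{\mathbb{R}^n}\Phi\bigl(\alpha_n|v|^{\frac{n}{n-1}}(1+\alpha\|v\|_n^n)^{\frac{1}{n-1}}\bigr)\,dx\le\liminf_k I_{\beta_k}^\alpha(v_k)\le\liminf_k I_{\beta_k}^\alpha(u_k),
\end{equation*}
where the last step uses the maximality of $u_k$ in $H$. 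Taking the supremum over $v$ yields $S\le\liminf_k I_{\beta_k}^\alpha(u_k)$, and combined with the upper bound this squeezes $\int_{\mathbb{R}^n}\Phi(\alpha_n|u_k|^{n/(n-1)}(1+\alpha\|u_k\|_n^n)^{1/(n-1)})\,dx\to S$, which is (i).

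For (ii), let $u_k^\ast$ denote the Schwarz symmetric decreasing rearrangement of $|u_k|$. By equimeasurability $u_k^\ast$ is supported in $B_{R_k}$ and $\|u_k^\ast\|_n=\|u_k\|_n$; by P\'olya--Szeg\H{o}, $\|\nabla u_k^\ast\|_n\le\|\nabla u_k\|_n$. Hence $\lambda:=\|u_k^\ast\|_{W^{1,n}}\le 1$, and $\tilde u_k:=u_k^\ast/\lambda\in H$ is radial and decreasing with $\tilde u_k\ge u_k^\ast$ pointwise and $\|\tilde u_k\|_n\ge\|u_k\|_n$. Using monotonicity of $\Phi$ and equimeasurability of $u_k^\ast$ with $|u_k|$,
\begin{equation*}
I_{\beta_k}^\alpha(\tilde u_k)\ge\int_{B_{R_k}}\Phi\bigl(\beta_k|u_k^\ast|^{\frac{n}{n-1}}(1+\alpha\|u_k^\ast\|_n^n)^{\frac{1}{n-1}}\bigr)\,dx=I_{\beta_k}^\alpha(u_k).
\end{equation*}
Maximality of $u_k$ forces equality, so $\tilde u_k$ is itself a maximizer and may be chosen in place of $u_k$.

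The main obstacle lies in the Fatou/subsequence step of (i): one must design the cutoff so that $\|v_k\|_{W^{1,n}}=1$ exactly while preserving $v_k\to v$ in $W^{1,n}$, and then convert a.e. convergence along a subsequence into a genuine $\liminf$ bound on the original sequence. The latter is handled by contradiction: if $\liminf_k I_{\beta_k}^\alpha(v_k)$ were strictly less than the target integral, a sub-subsequence converging a.e. would violate Fatou's lemma. Everything else is a routine application of rearrangement theory and the monotonicity of $\Phi$.
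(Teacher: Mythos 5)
Your proof is correct and follows essentially the same route as the paper: for (i) you cut off a fixed competitor to land in $W_0^{1,n}(B_{R_k})$, normalize, compare with $u_k$ via maximality, and pass to the limit (you via Fatou plus a subsequence argument with a $k$-dependent cutoff; the paper via a fixed-$L$ cutoff, monotone convergence in $\beta_k$, and then $L\to\infty$), and for (ii) you use Schwarz rearrangement and Pólya--Szeg\H{o} exactly as the paper does. Your version of (ii) is marginally cleaner in that you simply replace $u_k$ by the renormalized rearrangement $\tilde u_k$ rather than first arguing that the normalization constant must equal $1$.
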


\begin{proof}
(i) Let $\eta$ be a cut-off function which is $1$ on $B_{1}$ and $0$ on $%
\mathbb{R}
^{n}\backslash B_{2}$. Then given any $\varphi\in W^{1,n}\left(
\mathbb{R}
^{n}\right)  $ with $\int_{%
\mathbb{R}
^{n}}\left(  \left\vert \varphi\right\vert ^{n}+\left\vert \nabla
\varphi\right\vert ^{n}\right)  dx=1$, we have%
\[
\tau^{n}\left(  L\right)  :=\int_{%
\mathbb{R}
^{n}}\left(  \left\vert \nabla\eta\left(  \frac{x}{L}\right)  \varphi
\right\vert ^{n}+\left\vert \eta\left(  \frac{x}{L}\right)  \varphi\right\vert
^{n}\right)  dx\rightarrow1,\text{as }L\rightarrow+\infty.
\]
Hence for a fixed $L$ and $R_{k}>2L$,
\begin{align*}
&  \int_{B_{L}}\Phi\left(  \beta_{k}\left\vert \frac{\varphi}{\tau\left(
L\right)  }\right\vert ^{\frac{n}{n-1}}\left(  1+\alpha\left\Vert \frac
{\eta\left(  \frac{x}{L}\right)  \varphi}{\tau\left(  L\right)  }\right\Vert
_{{n}}^{{n}}\right)  ^{\frac{1}{n-1}}\right)  dx\\
&  \leq\int_{B_{2L}}\Phi\left(  \beta_{k}\left\vert \frac{\eta\left(  \frac
{x}{L}\right)  \varphi}{\tau\left(  L\right)  }\right\vert ^{\frac{n}{n-1}%
}\left(  1+\alpha\left\Vert \frac{\eta\left(  \frac{x}{L}\right)  \varphi
}{\tau\left(  L\right)  }\right\Vert _{{n}}^{{n}}\right)  ^{\frac{1}{n-1}%
}\right)  dx\\
&  \leq\int_{B_{R_{k}}}\Phi\left(  \beta_{k}\left\vert u_{k}\right\vert
^{\frac{n}{n-1}}\left(  1+\alpha\left\Vert u_{k}\right\Vert _{{n}}^{{n}%
}\right)  ^{\frac{1}{n-1}}\right)  dx.
\end{align*}
By the Levi Lemma, we have%
\begin{align*}
&  \int_{B_{L}}\Phi\left(  \alpha_{n}\left\vert \frac{\varphi}{\tau\left(
L\right)  }\right\vert ^{\frac{n}{n-1}}\left(  1+\alpha\left\Vert \frac
{\eta\left(  \frac{x}{L}\right)  \varphi}{\tau\left(  L\right)  }\right\Vert
_{{n}}^{{n}}\right)  ^{\frac{1}{n-1}}\right)  dx\\
&  \leq\underset{k\rightarrow\infty}{\lim}\int_{%
\mathbb{R}
^{n}}\Phi\left(  \beta_{k}\left\vert u_{k}\right\vert ^{\frac{n}{n-1}}\left(
1+\alpha\left\Vert u_{k}\right\Vert _{{n}}^{{n}}\right)  ^{\frac{1}{n-1}%
}\right)  dx.
\end{align*}
Letting \ $L\rightarrow\infty$, we get
\[
\int_{%
\mathbb{R}
^{n}}\Phi\left(  \alpha_{n}\left\vert \varphi\right\vert ^{\frac{n}{n-1}%
}\left(  1+\alpha\left\Vert \varphi\right\Vert _{{n}}^{{n}}\right)  ^{\frac
{1}{n-1}}\right)  dx\leq\underset{k\rightarrow\infty}{\lim}\int_{%
\mathbb{R}
^{n}}\Phi\left(  \beta_{k}\left\vert u_{k}\right\vert ^{\frac{n}{n-1}}\left(
1+\alpha\left\Vert u_{k}\right\Vert _{{n}}^{{n}}\right)  ^{\frac{1}{n-1}%
}\right)  dx.
\]
Hence,%
\begin{align*}
&  \underset{k\rightarrow\infty}{\lim}\int_{B_{R_{k}}}\Phi\left(  \beta
_{k}\left\vert u_{k}\right\vert ^{\frac{n}{n-1}}\left(  1+\alpha\left\Vert
u_{k}\right\Vert _{{n}}^{{n}}\right)  ^{\frac{1}{n-1}}\right)  dx\\
&  =\underset{\left\Vert u\right\Vert _{W^{1,n}\left(
\mathbb{R}
^{n}\right)  }=1}{\sup}\int_{%
\mathbb{R}
^{n}}\Phi\left(  \alpha_{n}\left\vert u\right\vert ^{\frac{n}{n-1}}\left(
1+\alpha\left\Vert u\right\Vert _{{n}}^{{n}}\right)  ^{\frac{1}{n-1}}\right)
dx.
\end{align*}

(ii) Let $u_{k}^{\ast}$ be the radial rearrangement of $u_{k}$. Then
\[
\tau_{k}^{n}:=\int_{%
\mathbb{R}
^{n}}\left(  \left\vert \nabla u_{k}^{\ast}\right\vert ^{n}+\left\vert
u_{k}^{\ast}\right\vert ^{n}\right)  dx\leq\int_{%
\mathbb{R}
^{n}}\left(  \left\vert \nabla u_{k}\right\vert ^{n}+\left\vert u_{k}%
\right\vert ^{n}\right)  dx=1,
\]
thus
\[
\int_{B_{R_{k}}}\Phi\left(  \beta_{k}\left\vert \frac{u_{k}^{\ast}}{\tau_{k}%
}\right\vert ^{\frac{n}{n-1}}\left(  1+\alpha\left\Vert \frac{u_{k}^{\ast}%
}{\tau_{k}}\right\Vert _{{n}}^{{n}}\right)  ^{\frac{1}{n-1}}\right)
dx\geq\int_{B_{R_{k}}}\Phi\left(  \beta_{k}\left\vert u_{k}^{\ast}\right\vert
^{\frac{n}{n-1}}\left(  1+\alpha\left\Vert u_{k}^{\ast}\right\Vert _{{n}}%
^{{n}}\right)  ^{\frac{1}{n-1}}\right)  dx.
\]
Since%
\[
\int_{B_{R_{k}}}\Phi\left(  \beta_{k}\left\vert u_{k}^{\ast}\right\vert
^{\frac{n}{n-1}}\left(  1+\alpha\left\Vert u_{k}^{\ast}\right\Vert _{{n}}%
^{{n}}\right)  ^{\frac{1}{n-1}}\right)  dx=\int_{B_{R_{k}}}\Phi\left(
\beta_{k}\left\vert u_{k}\right\vert ^{\frac{n}{n-1}}\left(
1+\alpha\left\Vert u_{k}\right\Vert _{{n}}^{{n}}\right)  ^{\frac
{1}{n-1}}\right)  dx,
\]
we have $\tau_{k}=1$. It is well-known that $\tau_{k}=1$ iff $u_{k}$ is
radial. Therefore
\begin{align*}
&  \int_{B_{R_{k}}}\Phi\left(  \beta_{k}\left\vert u_{k}^{\ast}\right\vert
^{\frac{n}{n-1}}\left(  1+\alpha\left\Vert u_{k}^{\ast}\right\Vert _{{n}}%
^{{n}}\right)  ^{\frac{1}{n-1}}\right)  dx\\
&  =\underset{\left\Vert u\right\Vert _{W^{1,n}\left(  B_{R_{k}}\right)  }%
=1}{\sup}\int_{B_{R_{k}}}\exp\left\{  \beta_{k}\left\vert u\right\vert
^{\frac{n}{n-1}}\left(  1+\alpha\left\Vert u\right\Vert _{{n}}^{{n}}\right)
^{\frac{1}{n-1}}\right\}  dx.
\end{align*}
So, we can assume $u_{k}=u_{k}\left(  \left\vert x\right\vert \right)  $, and
$u_{k}\left(  r\right)  $ is decreasing.
\end{proof}

\section{Blow up analysis}

In this section, the method of blow-up analysis will be used to analyze the
asymptotic behavior of the maximizing sequence $\left\{  u_{k}\right\}  $, and
the first part of Theorem \ref{moser-trudinger} will be finished.

\medskip

After a direct computation, the Euler-Lagrange equation for the extremal function
$u_{k}\in W_{0}^{1,n}\left(  B_{R_{k}}\right)  $ of $I_{\beta_{k}}^{\alpha
}\left(  u\right)  $ can be written as%

\begin{equation}
-\triangle_{n}u_{k}+u_{k}^{n-1}=\mu_{k}\lambda_{k}^{-1}u_{k}^{\frac{1}{n-1}%
}\Phi^{\prime}\left\{  \alpha_{k}u_{k}^{\frac{n}{n-1}}\right\}  +\gamma
_{k}u_{k}^{{n}-1} \label{equation}%
\end{equation}
where
\[
\left\{
\begin{array}
[c]{c}%
u_{k}\in W_{0}^{1,n}\left(  B_{R_{k}}\right)  ,\left\Vert u_{k}\right\Vert
_{W^{1,n}}=1,\\
\alpha_{k}=\beta_{k}\left(  1+\alpha\left\Vert u_{k}\right\Vert _{{n}}^{{n}%
}\right)  ^{\frac{1}{n-1}},\\
\mu_{k}=\left(  1+\alpha\left\Vert u_{k}\right\Vert _{{n}}^{{n}}\right)
/\left(  1+2\alpha\left\Vert u_{k}\right\Vert _{{n}}^{{n}}\right)  ,\\
\gamma_{k}=\alpha/\left(  1+2\alpha\left\Vert u_{k}\right\Vert _{{n}}^{{n}%
}\right)  ,\\
\lambda_{k}=\int_{B_{R_{k}}}u_{k}^{\frac{n}{n-1}}\Phi^{\prime}\left(
\alpha_{k}u_{k}^{\frac{n}{n-1}}\right)  .
\end{array}
\right.
\]

In the following, we denote $c_{k}=\max u_{k}=u_{k}\left(  0\right)  $. First,
we give the following important observation.

\begin{lemma}
\label{lamna}$\underset{k}{\inf}\, \,\lambda_{k}>0.$
\end{lemma}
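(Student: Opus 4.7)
The plan is to obtain a two-sided squeeze: from above, $\alpha_k$ is bounded, and from below, $I_{\beta_k}^\alpha(u_k)$ stays away from zero because $u_k$ is a maximizing sequence. The bridge between these and $\lambda_k$ is the elementary pointwise inequality
\[
t\Phi'(t)\geq (n-1)\Phi(t)\qquad \text{for all } t\geq 0,
\]
which I would prove first. This follows from expanding
\[
\Phi(t)=\sum_{j=n-1}^{\infty}\frac{t^j}{j!},\qquad t\Phi'(t)=\sum_{j=n-1}^{\infty}\frac{j\, t^j}{j!},
\]
and noting that every $j$ appearing in the tail sum satisfies $j\geq n-1$, so the coefficients of $t\Phi'(t)$ dominate those of $(n-1)\Phi(t)$ term by term.

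Next, I would apply this inequality with $t=\alpha_k u_k^{n/(n-1)}$ and integrate over $B_{R_k}$. Since
\[
\alpha_k u_k^{n/(n-1)}\Phi'\bigl(\alpha_k u_k^{n/(n-1)}\bigr)\geq (n-1)\,\Phi\bigl(\alpha_k u_k^{n/(n-1)}\bigr),
\]
integrating and recalling the definitions of $\lambda_k$ and $I_{\beta_k}^\alpha(u_k)$ gives
\[
\alpha_k\,\lambda_k\;\geq\;(n-1)\,I_{\beta_k}^\alpha(u_k).
\]

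The upper bound on $\alpha_k$ is immediate: since $0\leq\alpha<1$ and $\|u_k\|_n^n\leq\|u_k\|_{W^{1,n}}^n=1$, we have $\alpha_k=\beta_k(1+\alpha\|u_k\|_n^n)^{1/(n-1)}\leq \alpha_n(1+\alpha)^{1/(n-1)}=:C_0$. For the lower bound on $I_{\beta_k}^\alpha(u_k)$, I would use that $u_k$ is a maximizer of $I_{\beta_k}^\alpha$ over $H$, so for any fixed $\varphi\in C_0^\infty(\mathbb{R}^n)$ with $\|\varphi\|_{W^{1,n}}=1$ and any $k$ large enough that $\mathrm{supp}\,\varphi\subset B_{R_k}$,
\[
I_{\beta_k}^\alpha(u_k)\;\geq\;I_{\beta_k}^\alpha(\varphi)\;=\;\int_{\mathbb{R}^n}\Phi\bigl(\beta_k|\varphi|^{n/(n-1)}(1+\alpha\|\varphi\|_n^n)^{1/(n-1)}\bigr)dx.
\]
Letting $k\to\infty$ and using $\beta_k\to\alpha_n$ together with monotone convergence (or just the fact that $\Phi\geq 0$ and the integrand converges pointwise to a strictly positive integrable function for $\varphi\not\equiv 0$), the right-hand side is bounded below by a positive constant $c_0>0$ independent of $k$.

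Combining, $\lambda_k\geq (n-1)C_0^{-1}c_0$, which gives $\inf_k\lambda_k>0$. There is no real obstacle here; the only point that needs a line of care is producing the positive test function lower bound $c_0$, which amounts to picking any nontrivial $\varphi\in C_0^\infty(\mathbb{R}^n)$ with $\|\varphi\|_{W^{1,n}}=1$ and observing that the integrand is a nonnegative power series in $\beta_k$ whose leading term is nonzero.
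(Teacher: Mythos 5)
Your proof is correct, and it is a cleaner, more direct version of the same underlying idea the paper uses. Both arguments hinge on the same power-series comparison between $t\Phi'(t)$ and $\Phi(t)$ --- you prove the sharp form $t\Phi'(t)\geq(n-1)\Phi(t)$, while the paper uses only the weaker $\Phi(t)\leq t\Phi'(t)$ --- and both exploit that $I_{\beta_k}^{\alpha}(u_k)=\int_{B_{R_k}}\Phi\bigl(\alpha_k u_k^{n/(n-1)}\bigr)\,dx$ is bounded away from zero because $u_k$ maximizes $I_{\beta_k}^\alpha$ over $H$. The paper, however, runs a proof by contradiction: it assumes $\lambda_k\to0$, deduces from $\lambda_k\geq\frac{\alpha_k^{n-2}}{(n-2)!}\int u_k^n\,dx$ that $\int u_k^n\to0$, and then splits $\mathbb{R}^n$ into $B_L$ and its complement to force $\int\Phi\bigl(\alpha_k u_k^{n/(n-1)}\bigr)\,dx\to0$, contradicting that $u_k$ is a maximizing sequence. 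Your one-line chain $\alpha_k\lambda_k\geq(n-1)I_{\beta_k}^\alpha(u_k)\geq(n-1)c_0$ eliminates the contradiction scaffolding and the domain decomposition altogether, which is a genuine simplification. One small housekeeping point: to get the bound for \emph{every} $k$ (not just large $k$), pick the test function $\varphi$ with $\operatorname{supp}\varphi\subset B_{R_1}$; since the $R_k$ are increasing, $\varphi\in W_0^{1,n}(B_{R_k})$ for all $k$, and since the $\beta_k$ are increasing and $\Phi$ is nondecreasing, $I_{\beta_k}^\alpha(\varphi)\geq I_{\beta_1}^\alpha(\varphi)=:c_0>0$ uniformly --- no monotone-convergence limiting argument is needed.
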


\begin{proof}
Assume $\lambda_{k}\rightarrow0$. Then%
\begin{align}
\lambda_{k}  &  =\int_{%
\mathbb{R}
^{n}}u_{k}^{\frac{n}{n-1}}\Phi^{\prime}\left(  \alpha_{k}u_{k}^{\frac{n}{n-1}%
}\right)  dx=\int_{%
\mathbb{R}
^{n}}u_{k}^{\frac{n}{n-1}}\underset{j=n-2}{\overset{\infty}{\sum}}%
\frac{\left(  \alpha_{k}u_{k}^{\frac{n}{n-1}}\right)  ^{j}}{j!}dx\nonumber\\
&  =\int_{%
\mathbb{R}
^{n}}\left(  \frac{\alpha_{k}^{j}u_{k}^{n}}{\left(  n-2\right)  !}%
+\ldots\right)  dx\geq\frac{\alpha_{k}^{j}}{\left(  n-2\right)  !}\int_{%
\mathbb{R}
^{n}}u_{k}^{n}dx. \label{1}%
\end{align}
Since $u_{k}\left(  \left\vert x\right\vert \right)  $ is decreasing, we have
$u_{k}^{n}\left(  L\right)  \left\vert B_{L}\right\vert \leq\int_{B_{L}}%
u_{k}^{n}dx\leq1$, and then%
\begin{equation}
u_{k}^{n}\left(  L\right)  \leq\frac{n}{\omega_{n-1}L^{n}}. \label{2}%
\end{equation}
Set $\varepsilon=\frac{n}{\omega_{n-1}L^{n}}$. Then for any $x\notin B_{L}$, we
have $u_{k}\leq\varepsilon$ , and
\[
\int_{%
\mathbb{R}
^{n}\backslash B_{L}}\Phi\left(  \alpha_{k}u_{k}^{\frac{n}{n-1}}\right)
dx\leq c\int_{%
\mathbb{R}
^{n}\backslash B_{L}}u_{k}^{n}dx\leq c\lambda_{k}\rightarrow0.
\]
Since
\[
\Phi\left(  \alpha_{k}u_{k}^{\frac{n}{n-1}}\right)  =\underset{j=n-1}%
{\overset{\infty}{\sum}}\frac{\left(  \alpha_{k}u_{k}^{\frac{n}{n-1}}\right)
^{j}}{j!}\leq\underset{j=n-2}{\overset{\infty}{\sum}}\frac{\alpha_{k}%
u_{k}^{\frac{n}{n-1}}\left(  \alpha_{k}u_{k}^{\frac{n}{n-1}}\right)  ^{j}%
}{\left(  j+1\right)  j!}\leq \alpha_{k}u_{k}^{\frac{n}{n-1}}\Phi^{\prime}\left(
\alpha_{k}u_{k}^{\frac{n}{n-1}}\right)  ,
\]
we have
\begin{align*}
&  \underset{k\rightarrow\infty}{\lim}\int_{B_{L}}\Phi\left(  \alpha_{k}%
u_{k}^{\frac{n}{n-1}}\right)  dx=\underset{k\rightarrow\infty}{\lim}\left(
\int_{B_{L}\cap\left\{  u_{k}\geq1\right\}  }+\int_{B_{L}\cap\left\{
u_{k}<1\right\}  }\right)  \Phi\left(  \alpha_{k}u_{k}^{\frac{n}{n-1}}\right)
dx\\
&  \leq\underset{k\rightarrow\infty}{\lim}\left(  c\int_{B_{L}}u_{k}^{\frac
{n}{n-1}}\Phi\left(  \alpha_{k}u_{k}^{\frac{n}{n-1}}\right)  dx+\int
_{B_{L}\cap\left\{  u_{k}<1\right\}  }\Phi\left(  \alpha_{k}u_{k}^{\frac
{n}{n-1}}\right)  dx\right)  \\
&  \leq\underset{k\rightarrow\infty}{\lim}\left(  c\lambda_{k}+c\int_{B_{L}%
}u_{k}^{n}dx\right)  .
\end{align*}
By (\ref{1}), we see that $\int_{B_{L}}u_{k}^{q}dx\rightarrow0$, for any
$q>1$, and then we have

\[
\underset{k\rightarrow\infty}{\lim}\int_{B_{L}}\Phi\left(  \alpha_{k}%
u_{k}^{\frac{n}{n-1}}\right)  dx=0.
\]
This is impossible.
\end{proof}

Now, we introduce the concept of Sobolev-normalized concentrating
sequence and concentration-compactness principle as in  \cite{ruf}.

\begin{definition}
\label{SNC}A sequence $\left\{  u_{k}\right\}  \in W^{1,n}\left(
\mathbb{R}
^{n}\right)  $ is a Sobolev-normalized concentrating sequence, if

i) $\left\Vert u_{k}\right\Vert _{W^{1,n}\left(
\mathbb{R}
^{n}\right)  }=1;$

ii) $u_{k}\rightarrow0$ weakly in $W^{1,n}\left(
\mathbb{R}
^{n}\right)  ;$

iii) there exists a point $x_{0}$ such that for any $\delta>0$, $\int_{%
\mathbb{R}
^{n}\backslash B_{\delta}\left(  x_{0}\right)  }\left(  \left\vert
\nabla u_{k}\right\vert ^{n}+\left\vert u_{k}\right\vert ^{n}\right)
dx\rightarrow0$.
\end{definition}
From Lemma \ref{jmdo}, we can derive the following
\begin{lemma}
\label{lions}Let $\left\{  u_{k}\right\}  $ be a sequence satisfying
$\left\Vert u_{k}\right\Vert _{W^{1,n}\left(
\mathbb{R}
^{n}\right)  }=1$, and $u_{k}\rightarrow u$ weakly in $W^{1,n}\left(
\mathbb{R}
^{n}\right)  $. Then either $\left\{  u_{k}\right\}  $ is a
Sobolev-normalized concentrating sequence, or there exists
$\gamma>0$ such that $\Phi\left(
\left(  \alpha_{n}+\gamma\right)  \left\vert u_{k}\right\vert ^{\frac{n}{n-1}%
}\right)  $ is bounded in $L^{1}\left(
\mathbb{R}
^{n}\right)  $. \end{lemma}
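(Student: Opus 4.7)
The plan is to split the argument based on whether the weak limit $u$ vanishes; recall that condition (ii) of Definition \ref{SNC} forces any Sobolev-normalized concentrating sequence to have trivial weak limit, so if $u\not\equiv 0$ then $\{u_k\}$ is automatically not SNC and one must produce the $L^1$-bound, while if $u\equiv 0$ both alternatives are available.

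\textbf{Case 1: $u\not\equiv 0$.} Since $\|u\|_{W^{1,n}}^n>0$, we have $q_n(u)>1$, so any choice $q\in(1,q_n(u))$ and $\gamma:=\alpha_n(q-1)>0$ lets Lemma \ref{jmdo} deliver directly
\[
\sup_k\int_{\mathbb{R}^n}\Phi\bigl((\alpha_n+\gamma)|u_k|^{\frac{n}{n-1}}\bigr)\,dx=\sup_k\int_{\mathbb{R}^n}\Phi\bigl(\alpha_n q|u_k|^{\frac{n}{n-1}}\bigr)\,dx<\infty.
\]

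\textbf{Case 2: $u\equiv 0$ and $\{u_k\}$ is not SNC.} The crux will be to establish a uniform local gap --- there exist $r,\sigma>0$ with
\[
\limsup_{k\to\infty}\,\sup_{x\in\mathbb{R}^n}\int_{B_{2r}(x)}\bigl(|\nabla u_k|^n+|u_k|^n\bigr)\,dx\leq 1-\sigma \qquad(\ast)
\]
--- and then to localize. For $(\ast)$, I would pass to a subsequence along which $\nu_k:=(|\nabla u_k|^n+|u_k|^n)\,dx$ converges vaguely to a finite nonnegative Borel measure $\nu$ on $\mathbb{R}^n$ with $\|\nu\|\leq 1$. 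A portmanteau argument then shows that $\nu(\{x_0\})=1$ for some $x_0$ would force $\nu_k(\mathbb{R}^n\setminus B_\delta(x_0))\to 0$ for every $\delta>0$ and hence make $\{u_k\}$ SNC at $x_0$, contradicting our hypothesis; hence $\sup_{x_0}\nu(\{x_0\})<1$, and this pointwise information is upgraded via tightness of $\nu$ to the uniform bound $(\ast)$.

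With $(\ast)$ in hand, I would cover $\mathbb{R}^n$ by balls $\{B_r(x_j)\}$ of bounded overlap, choose cutoffs $\eta_j\in C_c^\infty(B_{2r}(x_j))$ with $\eta_j\equiv 1$ on $B_r(x_j)$ and $|\nabla\eta_j|\leq C/r$, and localize via $v_{k,j}:=\eta_j u_k\in W_0^{1,n}(B_{2r}(x_j))$. A standard Leibniz estimate, combined with $(\ast)$ and the Rellich fact $u_k\to 0$ in $L^n_{\mathrm{loc}}$ (from $u\equiv 0$), gives $\|v_{k,j}\|_{W^{1,n}}^n\leq 1-\sigma/2$ for all large $k$. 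Rescaling by $(1-\sigma/2)^{1/n}$ and applying the classical Trudinger-Moser inequality (\ref{moser-tru}) on each $B_{2r}(x_j)$ then produces an admissible slightly supercritical exponent $\alpha_n+\gamma$ with $\gamma:=\alpha_n\bigl((1-\sigma/2)^{-1/(n-1)}-1\bigr)>0$; summing over the cover yields the desired global bound on $\Phi\bigl((\alpha_n+\gamma)|u_k|^{n/(n-1)}\bigr)$. The main obstacle will be upgrading the pointwise non-concentration statement $\nu(\{x_0\})<1$ to the uniform estimate $(\ast)$: atoms of $\nu$ may lie anywhere and mass can leak to infinity, so one must simultaneously rule out ``moving'' concentrations with base points $x_k^\ast\to\infty$ and the accumulation of many small atoms near a common limit --- this is where tightness of $\nu$ and a Lions-type concentration-compactness argument in the spirit of \cite{lions} are essential.
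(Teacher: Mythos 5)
Your Case 1 ($u\not\equiv 0$) is correct and is exactly the paper's intended derivation: the paper offers no proof of Lemma \ref{lions} at all, merely saying it follows ``from Lemma \ref{jmdo}'', and Lemma \ref{jmdo} is stated only for $u\neq 0$, so your Case 1 captures the entire content the authors actually spell out.

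Your Case 2, however, has a genuine gap, and it is worse than the one you flag. The key estimate $(\ast)$ is simply false for general sequences: take $v_k$ to be the paper's Moser-type sequence from Section 2, normalized so that $\|v_k\|_{W^{1,n}(\mathbb{R}^n)}=1$ and $v_k\rightharpoonup 0$ while concentrating at the origin, and set $u_k(x)=v_k(x-ke_1)$. Then $\|u_k\|_{W^{1,n}}=1$, $u_k\rightharpoonup 0$, and $\{u_k\}$ is \emph{not} a Sobolev-normalized concentrating sequence because no fixed $x_0$ works in Definition \ref{SNC}(iii). Yet $\nu_k(B_{2r}(ke_1))\to 1$, so $\sup_x \nu_k(B_{2r}(x))\to 1$ and $(\ast)$ fails. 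Moreover, $\int_{\mathbb{R}^n}\Phi\bigl((\alpha_n+\gamma)|u_k|^{n/(n-1)}\bigr)dx = \int_{\mathbb{R}^n}\Phi\bigl((\alpha_n+\gamma)|v_k|^{n/(n-1)}\bigr)dx \to\infty$ for every $\gamma>0$, so this is in fact a counterexample to Lemma \ref{lions} as literally stated. Your suggested cure --- tightness of $\nu$ --- is not a hypothesis of the lemma and is precisely what fails here, so it cannot be invoked.

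What saves the paper is that, in every application (e.g.\ Lemma \ref{attain lemma2}), the sequences $u_k$ are radially symmetric and decreasing. For such $u_k$ the only candidate concentration point in Definition \ref{SNC}(iii) is the origin, translations cannot occur, and ``not SNC'' means exactly that for some $\delta,\sigma>0$ and a subsequence $\int_{\mathbb{R}^n\setminus B_\delta}(|\nabla u_k|^n+u_k^n)\,dx\geq\sigma$. In that restricted setting the heavy machinery in your Case 2 is unnecessary: one has $\int_{B_\delta}|\nabla u_k|^n\,dx\leq 1-\sigma$, the radial decay bound (\ref{2}) controls $u_k(\delta)$, and a single application of the classical Trudinger--Moser inequality on $B_\delta$ to $(u_k-u_k(\delta))^+$ gives the bound with $\gamma$ depending on $\sigma$; the tail over $\mathbb{R}^n\setminus B_\delta$ is handled by $\Phi(t)\sim c\,t^{n-1}$ for small $t$ and $\|u_k\|_n\leq 1$. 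So the correct reading is that Lemma \ref{lions} should carry (or implicitly carries) a radial-decreasing (or at least tightness) hypothesis; under it your localization idea works, but the vague measure / portmanteau / $(\ast)$ apparatus should be replaced by this direct one-ball argument.
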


\begin{lemma}
\label{attain lemma2}\bigskip If $\ \underset{k}{\sup}c_{k}<\infty$,
then Theorem \ref{moser-trudinger} and Theorem \ref{attain} hold.
\end{lemma}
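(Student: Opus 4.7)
Under the hypothesis $\sup_k c_k\le M$ and radial monotonicity of $u_k$, we have $\|u_k\|_{L^\infty(\mathbb{R}^n)}\le M$, so the argument of $\Phi$ in $I_{\beta_k}^\alpha(u_k)$ is uniformly bounded by some constant $T>0$ depending on $M$, $\alpha$, $\alpha_n$. Since $\Phi(t)=\sum_{j\ge n-1}t^j/j!$ satisfies $\Phi(t)\le C\,t^{n-1}$ on $[0,T]$, the pointwise bound
\[
\Phi\!\Big(\beta_k u_k^{n/(n-1)}(1+\alpha\|u_k\|_n^n)^{1/(n-1)}\Big)\le C\,u_k^n
\]
yields $I_{\beta_k}^\alpha(u_k)\le C\|u_k\|_n^n\le C$, hence $S<\infty$, which proves the finiteness part of Theorem~\ref{moser-trudinger}.

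For Theorem~\ref{attain}, I would extract a subsequence with $u_k\rightharpoonup u_\alpha$ in $W^{1,n}(\mathbb{R}^n)$, strongly in $L^p_{\mathrm{loc}}$, and pointwise a.e., and prove $u_\alpha$ is the desired extremal. The first step is to rule out $u_\alpha\equiv 0$ when $\alpha$ is small. In that case, radial monotonicity forces $u_k(r)\to 0$ for every $r>0$, and each higher-order term $\int u_k^{nj/(n-1)}dx$ for $j\ge n$ vanishes in the limit: the split $\int=\int_{\{u_k>\epsilon\}}+\int_{\{u_k\le\epsilon\}}$ bounds the first piece by $M^{nj/(n-1)}|\{u_k>\epsilon\}|\to 0$ and the second by $\epsilon^{n(j-n+1)/(n-1)}\|u_k\|_n^n$, which is small uniformly in $k$ as $\epsilon\to 0$. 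Only the leading term of $\Phi$ survives, so with $A=\lim_k\|u_k\|_n^n\in[0,1]$,
\[
\lim_k I_{\beta_k}^\alpha(u_k)=\frac{\alpha_n^{n-1}}{(n-1)!}\,A(1+\alpha A)\le\frac{\alpha_n^{n-1}}{(n-1)!}(1+\alpha).
\]
On the other hand, using the Li--Ruf extremal $u_0$ for $\alpha=0$, which satisfies $F_0(u_0)=S(0)>\alpha_n^{n-1}/(n-1)!$ strictly (the higher-order Taylor coefficients of $\Phi$ contribute positively when $u_0\not\equiv 0$), together with the monotonicity $F_\alpha(u_0)\ge F_0(u_0)$ and a first-order expansion in $\alpha$, one gets $S(\alpha)\ge F_\alpha(u_0)>(\alpha_n^{n-1}/(n-1)!)(1+\alpha)$ for $\alpha$ sufficiently small, contradicting the vanishing scenario.

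With $u_\alpha\not\equiv 0$ secured, Lemma~\ref{lions} provides $\gamma>0$ so that $\Phi((\alpha_n+\gamma)u_k^{n/(n-1)})$ is bounded in $L^1(\mathbb{R}^n)$. For $\alpha$ sufficiently small, $\beta_k(1+\alpha\|u_k\|_n^n)^{1/(n-1)}<\alpha_n+\gamma$, so the integrand of $I_{\beta_k}^\alpha(u_k)$ is uniformly integrable on $\mathbb{R}^n$; combined with pointwise convergence, Vitali's convergence theorem yields $I_{\beta_k}^\alpha(u_k)\to F(u_\alpha)=S$. Finally, if $\|u_\alpha\|_{W^{1,n}}<1$, the rescaling $u_\alpha/\|u_\alpha\|_{W^{1,n}}$ has unit norm and produces a strictly larger value of $F$, contradicting $F(u_\alpha)=S$; hence $\|u_\alpha\|_{W^{1,n}}=1$ and $u_\alpha$ is the required extremal. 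The main obstacle is the vanishing exclusion, whose key ingredient is the strict inequality $S(\alpha)>(\alpha_n^{n-1}/(n-1)!)(1+\alpha)$ for small $\alpha$---which is precisely why the smallness of $\alpha$ is needed in Theorem~\ref{attain}.
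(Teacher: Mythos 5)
Your finiteness argument for Theorem~\ref{moser-trudinger} (bound the argument of $\Phi$ uniformly by $\sup_k c_k$, then use $\Phi(t)\le C_T t^{n-1}$ on $[0,T]$ to dominate the integrand by $C u_k^n$) is correct and in fact more direct than the paper's decomposition into $B_L$ and its complement.

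For the existence statement, however, there is a genuine gap at the final step. You write that Vitali's theorem gives $I_{\beta_k}^\alpha(u_k)\to F(u_\alpha)=S$, but this is not what Vitali gives you. The argument of $\Phi$ contains $\|u_k\|_n^n$, and weak $W^{1,n}$ convergence only guarantees $\|u_\alpha\|_n^n\le A:=\lim_k\|u_k\|_n^n$, possibly with strict inequality. What Vitali actually yields is
\[
S=\lim_k I_{\beta_k}^\alpha(u_k)=\int_{\mathbb{R}^n}\Phi\!\left(\alpha_n u_\alpha^{\frac{n}{n-1}}\,(1+\alpha A)^{\frac{1}{n-1}}\right)dx ,
\]
which equals $F(u_\alpha)$ only if $A=\|u_\alpha\|_n^n$, i.e.\ only if $u_k\to u_\alpha$ strongly in $L^n(\mathbb{R}^n)$. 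Establishing this strong convergence is precisely the heart of the paper's proof of this lemma: they set $\tau_k^n=\|u_k\|_n^n/\|u\|_n^n\ge 1$ and compare $S$ with the functional evaluated at the \emph{dilation} $\tilde u(x)=u(x/\tau_k)$ (not the amplitude rescaling $u_\alpha/\|u_\alpha\|_{W^{1,n}}$ you use), concluding $\tau=1$ from a strict inequality. Your closing rescaling step only addresses $\|u_\alpha\|_{W^{1,n}}<1$; without first showing $A=\|u_\alpha\|_n^n$ it does not yield a contradiction, because you are comparing $F(u_\alpha/\|u_\alpha\|_{W^{1,n}})$ against $S$ via the false identification $S=F(u_\alpha)$.

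A secondary issue concerns your vanishing exclusion. The inequality $S(0)>\alpha_n^{n-1}/(n-1)!$ is not justified by saying ``the higher-order Taylor coefficients contribute positively'': the extremal $u_0$ has $\|u_0\|_n^n<1$ strictly, so $F_0(u_0)=\frac{\alpha_n^{n-1}}{(n-1)!}\|u_0\|_n^n+(\text{positive})$ does not automatically exceed $\frac{\alpha_n^{n-1}}{(n-1)!}$. In addition, your route artificially restricts to small $\alpha$ here, while the paper rules out $u\equiv 0$ for all $\alpha<1$ via a cleaner dichotomy: if $\{u_k\}$ were a Sobolev-normalized concentrating sequence, the boundedness of $c_k$ would force $\|u_k\|_n\to 0$ and hence $S=0$; if not, Lemma~\ref{lions} gives uniform integrability and then $\int\Phi\to\int\Phi(0)=0$, again impossible. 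Adopting that dichotomy would simplify your argument and remove the unneeded smallness assumption at this stage.
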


\begin{proof}
For any $\varepsilon>0$, by using (\ref{2}) we can find some $L$ such that
$u_{k}\left(  x\right)  \leq\varepsilon$ when $x\notin B_{L}$. We rewrite
$\int_{%
\mathbb{R}
^{n}}\left(  \Phi\left(  \alpha_{k}u_{k}^{\frac{n}{n-1}}\right)  -\frac
{\alpha_{k}^{n-1}u_{k}^{n}}{\left(  n-1\right)  !}\right)  dx$ as%
\[
\left(  \int_{B_{L}}+\int_{%
\mathbb{R}
^{n}\backslash B_{L}}\right)  \left(  \Phi\left(  \alpha_{k}u_{k}^{\frac
{n}{n-1}}\right)  -\frac{\alpha_{k}^{n-1}u_{k}^{n}}{\left(  n-1\right)
!}\right)  dx.
\]
Since
\[
\int_{%
\mathbb{R}
^{n}\backslash B_{L}}\left(  \Phi\left(  \alpha_{k}u_{k}^{\frac{n}{n-1}%
}\right)  -\frac{\alpha_{k}^{n-1}u_{k}^{n}}{\left(  n-1\right)  !}\right)
dx=c\int_{%
\mathbb{R}
^{n}\backslash B_{L}}u_{k}^{\frac{n^{2}}{n-1}}dx\leq c\varepsilon^{\frac
{n^{2}}{n-1}-n}\int_{%
\mathbb{R}
^{n}}u_{k}^{n}dx=c\varepsilon^{\frac{n^{2}}{n-1}-n},
\]
we have
\begin{equation}
\int_{%
\mathbb{R}
^{n}}\left(  \Phi\left(  \alpha_{k}u_{k}^{\frac{n}{n-1}}\right)  -\frac
{\alpha_{k}^{n-1}u_{k}^{n}}{\left(  n-1\right)  !}\right)  dx=\int_{B_{L}%
}\left(  \Phi\left(  \alpha_{k}u_{k}^{\frac{n}{n-1}}\right)  -\frac{\alpha
_{k}^{n-1}u_{k}^{n}}{\left(  n-1\right)  !}\right)  dx+O\left(  \varepsilon
^{\frac{n^{2}}{n-1}-n}\right).  \label{33}%
\end{equation}
It follows from  $\underset{k}{\sup}c_{k}<\infty$ that
\begin{align*}
\int_{%
\mathbb{R}
^{n}}\Phi\left(  \alpha_{k}u_{k}^{\frac{n}{n-1}}\right)  dx  &  =\int_{B_{L}%
}\left(  \Phi\left(  \alpha_{k}u_{k}^{\frac{n}{n-1}}\right)  -\frac{\alpha
_{k}^{n-1}u_{k}^{n}}{\left(  n-1\right)  !}\right)  dx+\int_{%
\mathbb{R}
^{n}}\frac{\alpha_{k}^{n-1}u_{k}^{n}}{\left(  n-1\right)  !}dx+O\left(
\varepsilon^{\frac{n^{2}}{n-1}-n}\right) \\
&  \leq c\left(  L\right)  ,
\end{align*}
thus, Theorem \ref{moser-trudinger} holds. By Lemma \ref{lamna} and applying
the elliptic estimate in \cite{T} to equation (\ref{equation}), we have
$u_{k}\rightarrow u$ in $C_{loc}^{1}\left(
\mathbb{R}
^{n}\right)  $.

\medskip

When $u=0$, we claim that $\left\{  u_{k}\right\}  $ is not a
Sobolev-normalized concentrating sequence. If not, by iii) of
Definition \ref{SNC} and the fact that $\left\vert u_{k}\right\vert
$ is bounded, we have for any $\delta>0$,
\begin{align*}
\int_{%
\mathbb{R}
^{n}}u_{k}^{n}dx  & \leq\int_{B_{\delta}}u_{k}^{n}dx+\int_{%
\mathbb{R}
^{n}\backslash B_{\delta}}u_{k}^{n}dx\\
& \leq c\delta^{n}+o_{k}\left(  1\right)  .
\end{align*}
Letting $\delta\rightarrow0$, we have $\int_{%
\mathbb{R}
^{n}}u_{k}^{n}dx\rightarrow0$, as $k\rightarrow\infty$. For any $\varepsilon>0$, when $L$ is large enough, we have\ by (\ref{33}) that%
\begin{align*}
&  S+o_{k}\left(  1\right)  =\int_{%
\mathbb{R}
^{n}}\Phi\left(  \alpha_{k}u_{k}^{\frac{n}{n-1}}\right)  dx\\
&  =\int_{%
\mathbb{R}
^{n}}\frac{\alpha_{k}^{n-1}u_{k}^{n}}{\left(  n-1\right)  !}dx+\int_{B_{L}%
}\left(  \Phi\left(
\alpha_{k}\cdot
u^{\frac{n}{n-1}}\right)
-\int_{B_{L}}\frac{\alpha_{k}^{n-1}\cdot u^{n}}{\left(  n-1\right)  !}\right)
dx+O\left( \varepsilon^{\frac{n^{2}}{n-1}-n}\right)  ,
\end{align*}
then
\[
S\leq\int_{%
\mathbb{R}
^{n}}\frac{\alpha_{k}^{n-1}u_{k}^{n}}{\left(  n-1\right)
!}dx\rightarrow0,
\]
which is impossible, and thus the claim is proved.

\medskip

By Lemma \ref{lions}, we have $\int_{%
\mathbb{R}
^{n}}\Phi\left(  \alpha_{k}u_{k}^{\frac{n}{n-1}}\right)  dx\rightarrow\int_{%
\mathbb{R}
^{n}}\Phi\left(  \alpha_{n}u^{\frac{n}{n-1}}\right)  dx=0$, which is
still   impossible. Therefore, $u\neq0$.

\medskip

 Now, we show that $\int_{%
\mathbb{R}
^{n}}u_{k}^{n}\rightarrow\int_{%
\mathbb{R}
^{n}}u^{n}$. By (\ref{33}), we have%
\begin{align}
S  & =\underset{k\rightarrow\infty}{\lim}\int_{%
\mathbb{R}
^{n}}\Phi\left(  \alpha_{k}u_{k}^{\frac{n}{n-1}}\right)  dx\nonumber\\
& =\int_{%
\mathbb{R}
^{n}}\left(  \Phi\left(  \underset{k\rightarrow\infty}{\lim}\alpha_{k}%
u^{\frac{n}{n-1}}\right)  \right)  dx+\underset{k\rightarrow\infty}{\lim}%
\int_{%
\mathbb{R}
^{n}}\frac{\underset{k\rightarrow\infty}{\lim}\alpha_{k}^{n-1}\left(
u_{k}^{n}-u^{n}\right)  }{\left(  n-1\right)  !}dx.\label{4}%
\end{align}
Set
\[
\tau^{n}_k=\frac{\int_{%
\mathbb{R}
^{n}}u_{k}^{n}}{\int_{%
\mathbb{R}
^{n}}u^{n}}.
\]
By the Levi Lemma, we have $\tau_k\geq1$. \ Let $\tilde{u}=u\left(  \frac
{x}{\tau_k}\right)  $. Then, we have
\[
\int_{%
\mathbb{R}
^{n}}\left\vert \nabla\tilde{u}\right\vert ^{n}dx=\int_{%
\mathbb{R}
^{n}}\left\vert \nabla u\right\vert ^{n}dx\leq\int_{%
\mathbb{R}
^{n}}\left\vert \nabla u_{k}\right\vert ^{n}dx
\]
and
\[
\int_{%
\mathbb{R}
^{n}}\left\vert \tilde{u}\right\vert ^{n}dx=\tau_k^{n}\int_{%
\mathbb{R}
^{n}}\left\vert u\right\vert ^{n}dx\leq\int_{%
\mathbb{R}
^{n}}\left\vert u_{k}\right\vert ^{n}dx.
\]
Therefore%
\[
\int_{%
\mathbb{R}
^{n}}\left(  \left\vert \nabla\tilde{u}\right\vert ^{n}+\left\vert \tilde
{u}\right\vert ^{n}\right)  dx\leq1.
\]
Hence, we have by (\ref{4}) that%
\begin{align*}
S &  \geq\int_{%
\mathbb{R}
^{n}}\Phi\left(  \alpha_{n}\left(  1+\alpha\left\Vert \tilde{u}\right\Vert
_{n}^{n}\right)  ^{\frac{1}{n-1}}\tilde{u}^{\frac{n}{n-1}}\right)  dx\\
&  =\tau^{n}\int_{%
\mathbb{R}
^{n}}\Phi\left(  \alpha_{n}\left(  1+\alpha\tau^{n}\left\Vert u\right\Vert
_{{n}}^{{n}}\right)  ^{\frac{1}{n-1}}u^{\frac{n}{n-1}}\right)  dx\\
&  \geq\tau^{n}\int_{%
\mathbb{R}
^{n}}\Phi\left(  \underset{k\rightarrow\infty}{\lim}\alpha_{k}u^{\frac{n}%
{n-1}}\right)  dx+o(1)\\
&  =\int_{%
\mathbb{R}
^{n}}\left(  \Phi\left(  \underset{k\rightarrow\infty}{\lim}\alpha_{k}%
u^{\frac{n}{n-1}}\right)  +\left(  \tau^{n}-1\right)  \int_{%
\mathbb{R}
^{n}}\frac{\underset{k\rightarrow\infty}{\lim}\alpha_{k}^{n-1}u^{n}}{\left(
n-1\right)  !}\right)  dx+\\
&  +\left(  \tau^{n}-1\right)  \int_{%
\mathbb{R}
^{n}}\left(  \Phi\left(  \underset{k\rightarrow\infty}{\lim}\alpha_{k}%
u^{\frac{n}{n-1}}\right)  -\int_{%
\mathbb{R}
^{n}}\frac{\underset{k\rightarrow\infty}{\lim}\alpha_{k}^{n-1}u^{n}}{\left(
n-1\right)  !}\right)  +o(1)\\
&  =\left(  \tau^{n}-1\right)  \left(  \int_{%
\mathbb{R}
^{n}}\Phi\left(  \underset{k\rightarrow\infty}{\lim}\alpha_{k}u^{\frac{n}%
{n-1}}\right)  -\int_{%
\mathbb{R}
^{n}}\frac{\underset{k\rightarrow\infty}{\lim}\alpha_{k}^{n-1}u^{n}}{\left(
n-1\right)  !}\right)  +\\
&  +\underset{k\rightarrow\infty}{\lim}\int_{%
\mathbb{R}
^{n}}\Phi\left(  \alpha_{k}u_{k}^{\frac{n}{n-1}}\right)  dx+o(1)\\
&  =S+\left(  \tau^{n}-1\right)  \int_{%
\mathbb{R}
^{n}}\left(  \Phi\left(  \underset{k\rightarrow\infty}{\lim}\alpha_{k}%
u^{\frac{n}{n-1}}\right)  -\int_{%
\mathbb{R}
^{n}}\frac{\underset{k\rightarrow\infty}{\lim}\alpha_{k}^{n-1}u^{n}}{\left(
n-1\right)  !}\right)  dx+o(1)
\end{align*}
Since $\Phi\left(  \underset{k\rightarrow\infty}{\lim}\alpha_{k}u^{\frac
{n}{n-1}}\right)  -\int_{%
\mathbb{R}
^{n}}\frac{\underset{k\rightarrow\infty}{\lim}\alpha_{k}^{n-1}u^{n}}{\left(
n-1\right)  !}>0$, we have $\tau=1$, then
\[
\underset{k}{\lim}\int_{%
\mathbb{R}
^{n}}\Phi\left(  \alpha_{k}u_{k}^{\frac{n}{n-1}}\right)  dx=\int_{%
\mathbb{R}
^{n}}\Phi\left(  \alpha_{n}\left(  1+\alpha\left\Vert u\right\Vert _{{n}}%
^{{n}}\right)  ^{\frac{1}{n-1}}u^{\frac{n}{n-1}}\right)  dx
\]
Thus, $u$ is an extremal function.
\end{proof}

\bigskip

In the following, we assume $c_{k}\rightarrow+\infty\ $and perform a blow-up procedure.

\subsection{\bigskip The asymptotic behavior of $u_{k}$}

In this subsection, we investigate the asymptotic behavior of $u_{k}$. First,
we introduce the following important quatity%

\[
r_{k}^{n}=\frac{\lambda_{k}}{\mu_{k}c_{k}^{\frac{n}{n-1}}e^{\alpha_{k}%
c_{k}^{\frac{n}{n-1}}}}.
\]
By (\ref{2}), we can find a sufficiently large $L$ such that $u_{k}\leq1$ on $%
\mathbb{R}
^{n}\backslash B_{L}\,$. Then $\left(  u_{k}-u_{k}\left(  L\right)  \right)
^{+}\in W_{0}^{1,n}\left(  B_{L}\right)  $ and%

\[
\int_{B_{L}}\left\vert \nabla\left(  u_{k}-u_{k}\left(  L\right)  \right)
^{+}\right\vert ^{n}dx\leq1,
\]
hence by \cite[Theorem 1.1]{yang}, we have%

\[
\int_{B_{L}}e^{\alpha_{n}\left(  1+\beta\left\Vert u_{k}-u_{k}\left(
L\right)  \right\Vert _{n}^{n}\right)  ^{\frac{1}{n-1}}\left(  u_{k}%
-u_{k}\left(  L\right)  \right)  ^{\frac{n}{n-1}}}dx\leq c\left(  L\right)  ,
\]
provided
\[
\beta<\underset{u\in W_{0}^{1,n}\left(  B_{L}\right)  }{\inf}\frac{\left\Vert
\nabla u\right\Vert _{n}^{n}}{\left\Vert u\right\Vert _{n}^{n}}.
\]
For any $q<\alpha_{n}\left(  1+\beta\left\Vert u_{k}-u_{k}\left(  L\right)
\right\Vert _{n}^{n}\right)  ^{\frac{1}{n-1}}$, we can find a constant
$c\left(  q\right)  \,\ $such that%

\[
qu_{k}^{\frac{n}{n-1}}\leq\alpha_{n}\left(  1+\beta\left\Vert u_{k}%
-u_{k}\left(  L\right)  \right\Vert _{n}^{n}\right)  ^{\frac{1}{n-1}}\left(
\left(  u_{k}-u_{k}\left(  L\right)  \right)  ^{+}\right)  ^{\frac{n}{n-1}%
}+c\left(  q\right)  ,
\]
and then we have%

\begin{equation}
\int_{B_{L}}e^{qu_{k}^{\frac{n}{n-1}}}dx\leq c\left(  L,q\right)  .
\label{add7}%
\end{equation}
Now we take some $0<A<1$ such that
\[
\left(  1-A\right)  \beta_{k}\left(  1+\alpha\left\Vert u_{k}\right\Vert
_{{n}}^{{n}}\right)  ^{\frac{1}{n-1}}<\alpha_{n}\left(  1+\beta\left\Vert
u_{k}-u_{k}\left(  L\right)  \right\Vert _{n}^{n}\right)  ^{\frac{1}{n-1}}.
\]
Then
\begin{align*}
&  \lambda_{k}e^{-A\beta_{k}\left(  1+\alpha\left\Vert u_{k}\right\Vert _{{n}%
}^{{n}}\right)  ^{\frac{1}{n-1}}c_{k}^{\frac{n}{n-1}}}\\
&  =e^{-A\beta_{k}\left(  1+\alpha\left\Vert u_{k}\right\Vert _{{n}}^{{n}%
}\right)  ^{\frac{1}{n-1}}c_{k}^{\frac{n}{n-1}}}\left[  \left(  \int_{%
\mathbb{R}
^{n}\backslash B_{L}}+\int_{B_{L}}\right)  u_{k}^{\frac{n}{n-1}}\Phi^{\prime
}\left(  \alpha_{k}u_{k}^{\frac{n}{n-1}}\right)  dx\right]  \\
&  \leq ce^{-A\beta_{k}\left(  1+\alpha\left\Vert u_{k}\right\Vert _{{n}}%
^{{n}}\right)  ^{\frac{1}{n-1}}c_{k}^{\frac{n}{n-1}}}\left(  \int_{%
\mathbb{R}
^{n}\backslash B_{L}}u_{k}^{n}dx+\int_{B_{L}}u_{k}^{\frac{n}{n-1}}e^{\beta
_{k}\left(  1+\alpha\left\Vert u_{k}\right\Vert _{{n}}^{{n}}\right)
^{\frac{1}{n-1}}u_{k}^{\frac{n}{n-1}}}dx\right)  \\
&  \leq c\int_{B_{L}}u_{k}^{\frac{n}{n-1}}e^{\left(  1-A\right)  \beta
_{k}\left(  1+\alpha\left\Vert u_{k}\right\Vert _{{n}}^{{n}}\right)
^{\frac{1}{n-1}}u_{k}^{\frac{n}{n-1}}}dx+o\left(  1\right)  .
\end{align*}
Since $u_{k}$ converges strongly in $L^{s}\left(  B_{L}\right)  $ for any
$s>1$, by using H\"{o}lder's inequality and (\ref{add7}), we have%

\[
\lambda_{k}\leq ce^{A\alpha_{k}c_{k}^{\frac{n}{n-1}}},
\]
hence $\ $%
\begin{equation}
\ r_{k}^{n}\leq Ce^{\left(  A-1\right)  \alpha_{k}c_{k}^{\frac{n}{n-1}}%
}=o\left(  c_{k}^{-q}\right)  , \label{rk}%
\end{equation}
for any \thinspace$q>0$.

\bigskip

Now, we set
\[
\left\{
\begin{array}
[c]{c}%
m_{k}\left(  x\right)  =u_{k}\left(  r_{k}x\right)  ,\\
\phi_{k}\left(  x\right)  =\frac{m_{k}\left(  x\right)  }{c_{k}},\\
\psi_{k}\left(  x\right)  =\frac{n}{n-1}\alpha_{k}c_{k}^{\frac{1}{n-1}}\left(
m_{k}-c_{k}\right)  ,
\end{array}
\right.
\]
where $m_{k},\phi_{k}$ and $\psi_{k}$ are defined on $\Omega_{k}:=\left\{
x\in%
\mathbb{R}
^{n}:r_{k}x\in B_{1}\right\}  $. From (\ref{equation}) and (\ref{rk}), we know
$\phi_{k}\left(  x\right)  ,\psi_{k}\left(  x\right)  $ satisfy%

\begin{align}
-\triangle_{n}\phi_{k}\left(  x\right)   &  =\frac{r_{k}^{n}}{c_{k}^{n-1}%
}\left(  \mu_{k}\lambda_{k}^{-1}m_{k}^{\frac{1}{n-1}}\Phi^{\prime}\left\{
\alpha_{k}m_{k}^{\frac{n}{n-1}}\right\}  +\left(  \gamma_{k}-1\right)
m_{k}^{{n}-1}\right) \nonumber\\
&  =\left(  \frac{1}{c_{k}^{n}}\phi_{k}^{\frac{1}{n-1}}\left(  x\right)
\Phi^{\prime}\left\{  \alpha_{k}\left(  m_{k}^{\frac{n}{n-1}}-c_{k}^{\frac
{n}{n-1}}\right)  \right\}  +o\left(  1\right)  \right)  \label{equ 1}%
\end{align}
and%

\begin{align}
-\triangle_{n}\psi_{k}\left(  x\right)   &  =\left(  \frac{n\alpha_{k}}%
{n-1}\right)  ^{n-1}c_{k}r_{k}^{n}\left(  \mu_{k}\lambda_{k}^{-1}m_{k}%
^{\frac{1}{n-1}}\Phi^{\prime}\left\{  \alpha_{k}m_{k}^{\frac{n}{n-1}}\right\}
+\left(  \gamma_{k}-1\right)  m_{k}^{n-1}\right) \nonumber\\
&  =\left(  \frac{n\alpha_{k}}{n-1}\right)  ^{n-1}\left(  \left(  \frac{m_{k}%
}{c_{k}}\right)  ^{\frac{1}{n-1}}e^{\alpha_{k}\left(  m_{k}^{\frac{n}{n-1}%
}-c_{k}^{\frac{n}{n-1}}\right)  }+o\left(  1\right)  \right)  . \label{equ 2}%
\end{align}

We analyze the limit function of $\phi_{k}$ and $\psi_{k}\left(  x\right)  $.
Since $u_{k}$ is bounded in $W^{1,n}\left(
\mathbb{R}
^{n}\right)  $, there exists a subsequence such that $u_{k}\rightarrow u$
weakly in $W^{1,n}\left(
\mathbb{R}
^{n}\right)  $. Because the right side of (\ref{equ 1}) vanishes as
$k\rightarrow\infty$, then we have $\phi_{k}\rightarrow\phi$ in $C_{loc}%
^{1}\left(
\mathbb{R}
^{n}\right)  $, as $k\rightarrow\infty$, by applying the classical eatimates
\cite{T}. Therefore,%

\[
-\triangle_{n}\phi=0\text{ in }%
\mathbb{R}
^{n}.
\]
Since $\phi_{k}\left(  0\right)  =1$, by the Lionville-type theorem, we have
$\phi\equiv1$ in $%
\mathbb{R}
^{n}$.

\medskip

Now, we investigate the asymptotic behavior of $\psi_{k}$. By (\ref{rk}) and
the fact that $\phi_{k}\left(  x\right)  \leq1$, we can rewrite (\ref{equ 2})
as
\[
-\triangle_{n}\psi_{k}\left(  x\right)  =O\left(  1\right)  .
\]
By \cite[Theorem 7]{J}, we know that $osc_{B_{L}}\psi_{k}\leq c\left(
L\right)  $ for any $L>0.$ Then from the result of \cite{T}, we have
$\left\Vert \psi_{k}\right\Vert _{C^{1,\delta}\left(  B_{L}\right)  }\leq
c\left(  L\right)  $ for some $\delta >0$. Hence $\psi_{k}$ converges in $C_{loc}^{1}\left(
B_{L}\right)  $ and $m_{k}-c_{k}\rightarrow0$ in $C_{loc}^{1}\left(
B_{L}\right)  $.

\medskip

Since
\[
m_{k}^{\frac{n}{n-1}}=c_{k}^{\frac{n}{n-1}}\left(  1+\frac{m_{k}-c_{k}}{c_{k}%
}\right)  ^{\frac{n}{n-1}}=c_{k}^{\frac{n}{n-1}}\left(  1+\frac{n}{n-1}%
\frac{m_{k}-c_{k}}{c_{k}}+O\left(  \frac{1}{c_{k}^{2}}\right)  \right)  ,
\]
we have
\begin{align}
\alpha_{k}\left(  m_{k}^{\frac{n}{n-1}}-c_{k}^{\frac{n}{n-1}}\right)   &
=\alpha_{k}c_{k}^{\frac{n}{n-1}}\left(  \frac{n}{n-1}\frac{m_{k}-c_{k}}{c_{k}%
}+O\left(  \frac{1}{c_{k}^{2}}\right)  \right) \label{5}\\
&  =\psi_{k}\left(  x\right)  +o\left(  1\right)  \rightarrow\psi\left(
x\right)  \text{ in }C_{loc}^{0},\nonumber
\end{align}
and then
\begin{equation}
-\triangle_{n}\psi=\left(  \frac{nc_{n}}{n-1}\right)  ^{n-1}\exp\left\{
\psi\left(  x\right)  \right\}  , \label{6}%
\end{equation}
where $c_{n}=\underset{k\rightarrow\infty}{\lim}\alpha_{k}=\alpha_{n}\left(
1+\alpha\underset{k\rightarrow\infty}{\lim}\left\Vert u_{k}\right\Vert _{{n}%
}^{{n}}\right)  ^{\frac{1}{n-1}}$.

\medskip

Since $\psi$ is radially symmetric and decreasing, it is easy to see that
(\ref{6}) has only one solution. We can check that
\[
\psi\left(  x\right)  =-n\log\left(  1+\frac{c_{n}}{n^{\frac{n}{n-1}}%
}\left\vert x\right\vert ^{\frac{n}{n-1}}\right)  \text{ }%
\]
and%

\begin{align}
\int_{%
\mathbb{R}
^{n}}e^{\psi\left(  x\right)  }dx  &  =\omega_{n-1}\frac{n-1}{n}\left(
\frac{n^{\frac{n}{n-1}}}{c_{n}}\right)  ^{n-1}\int_{0}^{\infty}\left(
1+t\right)  ^{-n}t^{n-2}dt\nonumber\\
&  =\omega_{n-1}\frac{n-1}{n}\left(  \frac{n^{\frac{n}{n-1}}}{c_{n}}\right)
^{n-1}\cdot\frac{1}{n-1}=\frac{1}{1+\alpha\underset{k\rightarrow\infty}{\lim
}\left\Vert u_{k}\right\Vert _{{n}}^{{n}}}. \label{add3}%
\end{align}

For any $A>1$, let $u_{k}^{A}=\min\left\{  u_{k},\frac{c_{k}}{A}\right\}  $.

\begin{lemma}
\bigskip For any $A>1$, there holds%
\[
\underset{k\rightarrow\infty}{\lim\sup}\int_{%
\mathbb{R}
^{n}}\left(  \left\vert u_{k}^{A}\right\vert ^{n}+\left\vert \nabla u_{k}%
^{A}\right\vert ^{n}\right)  dx\leq1-\frac{A-1}{A}\frac{1}{1+\alpha
\underset{k\rightarrow\infty}{\lim}\left\Vert u_{k}\right\Vert _{{n}}^{{n}}}%
\]

\end{lemma}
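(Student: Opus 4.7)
The strategy is to test the Euler-Lagrange equation (\ref{equation}) against the truncation $u_k^A=\min\{u_k,c_k/A\}$, turn the resulting identity into an upper bound on $\int((u_k^A)^n+|\nabla u_k^A|^n)\,dx$, and then use the blow-up asymptotics already established to quantify precisely how much mass of $u_k$ concentrates on the region where $u_k>c_k/A$.

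First, I would multiply (\ref{equation}) by $u_k^A$ and integrate by parts. Since $\nabla u_k^A=\nabla u_k\cdot\chi_{\{u_k\le c_k/A\}}$ a.e., one has $\int|\nabla u_k|^{n-2}\nabla u_k\cdot\nabla u_k^A\,dx=\int|\nabla u_k^A|^n\,dx$, which gives
\[
\int|\nabla u_k^A|^n\,dx+(1-\gamma_k)\int u_k^{n-1}u_k^A\,dx=\mu_k\lambda_k^{-1}\int u_k^{\frac{1}{n-1}}u_k^A\,\Phi'(\alpha_k u_k^{\frac{n}{n-1}})\,dx.
\]
Because $(u_k^A)^n\le u_k^{n-1}u_k^A$ pointwise (they agree where $u_k\le c_k/A$, and on $\{u_k>c_k/A\}$ one has $u_k^{n-1}u_k^A=(c_k/A)u_k^{n-1}\ge(c_k/A)^n=(u_k^A)^n$), this already yields
\[
\int\bigl((u_k^A)^n+|\nabla u_k^A|^n\bigr)\,dx\le\mu_k\lambda_k^{-1}\int u_k^{\frac{1}{n-1}}u_k^A\,\Phi'\,dx+\gamma_k\int u_k^{n-1}u_k^A\,dx.
\]

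Next, I would write $u_k^A=u_k-(u_k-c_k/A)^+$ inside the main integral and use the definition $\lambda_k=\int u_k^{n/(n-1)}\Phi'\,dx$ to reduce to
\[
\mu_k\lambda_k^{-1}\int u_k^{\frac{1}{n-1}}u_k^A\,\Phi'\,dx=\mu_k-\mu_k\lambda_k^{-1}\int_{\{u_k>c_k/A\}}\bigl(u_k-\tfrac{c_k}{A}\bigr)u_k^{\frac{1}{n-1}}\Phi'\,dx.
\]
The crucial step is to extract a positive lower bound for the subtracted term. Since $m_k/c_k\to1$ in $C_{loc}^1$, for $k$ large one has $B_{Rr_k}\subset\{u_k>c_k/A\}$. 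After rescaling $x=r_ky$ and using the asymptotics already proved — namely $(m_k-c_k/A)/c_k\to(A-1)/A$, $(m_k/c_k)^{\frac{1}{n-1}}\to1$, and $\Phi'(\alpha_k m_k^{n/(n-1)})/e^{\alpha_k c_k^{n/(n-1)}}\to e^{\psi(y)}$ uniformly on $B_R$ — together with the identity $r_k^n=\lambda_k\mu_k^{-1}c_k^{-n/(n-1)}e^{-\alpha_k c_k^{n/(n-1)}}$, one obtains
\[
\lim_{k\to\infty}\mu_k\lambda_k^{-1}\int_{B_{Rr_k}}\bigl(u_k-\tfrac{c_k}{A}\bigr)u_k^{\frac{1}{n-1}}\Phi'\,dx=\frac{A-1}{A}\int_{B_R}e^{\psi(y)}\,dy.
\]
Sending $R\to\infty$ and invoking (\ref{add3}) gives a lower bound of $\frac{A-1}{A(1+\alpha L)}$, where $L:=\lim_k\|u_k\|_n^n$.

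Finally, the remainder is controlled by $\gamma_k\int u_k^{n-1}u_k^A\,dx\le\gamma_k\|u_k\|_n^n\to\gamma_\infty L$, and a direct algebraic check from the formulas for $\mu_k$ and $\gamma_k$ gives $\mu_\infty+\gamma_\infty L=\frac{(1+\alpha L)+\alpha L}{1+2\alpha L}=1$. Combining everything,
\[
\limsup_{k\to\infty}\int\bigl((u_k^A)^n+|\nabla u_k^A|^n\bigr)dx\le\mu_\infty+\gamma_\infty L-\frac{A-1}{A(1+\alpha L)}=1-\frac{A-1}{A}\cdot\frac{1}{1+\alpha L},
\]
which is exactly the claim. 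The main technical obstacle is justifying the passage to the limit inside the rescaled integral on $B_R$: the leading exponential factor $e^{\alpha_k c_k^{n/(n-1)}}$ must be divided out cleanly, the subdominant polynomial corrections inside $\Phi'$ must be shown negligible relative to the leading exponential, and the $C_{loc}^1$ convergences of $\phi_k$ and $\psi_k$ must be combined — but once those are in hand, the remainder of the argument is bookkeeping.
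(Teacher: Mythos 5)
Your proof is correct, and it takes a genuinely different route from the paper's. The paper tests the Euler--Lagrange equation (\ref{equation}) with $(u_k - c_k/A)^+$, obtains the lower bound
\[
\liminf_k \int\Bigl(\bigl|\nabla(u_k-\tfrac{c_k}{A})^+\bigr|^n+(u_k-\tfrac{c_k}{A})^+ u_k^{n-1}\Bigr)dx \ \geq\ \frac{A-1}{A}\cdot\frac{1}{1+\alpha L},
\]
with the $\gamma_k$ term absorbed into $o(1)$ via the concentration estimate $\int(u_k-c_k/A)^+|u_k|^{n-1}dx\to 0$, and then converts this into an upper bound on $\int(|\nabla u_k^A|^n+|u_k^A|^n)dx$ through an explicit energy decomposition of $1=\|u_k\|_{W^{1,n}}^n$, discarding three vanishing remainder terms. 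You instead test with $u_k^A$ directly, use the pointwise inequality $(u_k^A)^n\le u_k^{n-1}u_k^A$ to get the upper bound without the final decomposition step, and handle the $\gamma_k$ term by the exact algebraic cancellation $\mu_\infty+\gamma_\infty L=1$ (with $L=\lim_k\|u_k\|_n^n$, after passing to a subsequence). Both proofs extract precisely the same concentration quantity: your $\mu_k\lambda_k^{-1}\int_{\{u_k>c_k/A\}}(u_k-c_k/A)\,u_k^{1/(n-1)}\Phi'$ is the paper's right-hand side after testing with $(u_k-c_k/A)^+$, and both evaluate it through the rescaled blow-up integral over $B_{Rr_k}$, using $r_k^n\mu_k/\lambda_k=c_k^{-n/(n-1)}e^{-\alpha_k c_k^{n/(n-1)}}$, the convergence $\psi_k\to\psi$ in $C^0_{loc}$, and the limit $\int_{\mathbb{R}^n}e^\psi dx=(1+\alpha L)^{-1}$ from (\ref{add3}). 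The small trade-off is that the paper only needs the remainder terms to vanish qualitatively, whereas you need the identity $\mu_\infty+\gamma_\infty L=1$, but both are elementary; your version has the advantage of skipping the end-of-proof bookkeeping with the three $o(1)$ pieces. One point worth spelling out, which you note implicitly: restricting the negative integral to $B_{Rr_k}$ requires $B_{Rr_k}\subset\{u_k>c_k/A\}$ for large $k$, which holds because $m_k/c_k\to 1$ uniformly on $B_R$ and $1/A<1$.
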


\begin{proof}
\bigskip Since $\left\vert \left\{  x:u_{k}\geq\frac{c_{k}}{A}\right\}
\right\vert \left\vert \frac{c_{k}}{A}\right\vert ^{n}\leq\int_{\left\{
u_{k}\geq\frac{c_{k}}{A}\right\}  }\left\vert u_{k}\right\vert ^{n}dx\leq1$,
we can find a sequence $\rho_{k}\rightarrow0$ such that
\[
\left\{  x:u_{k}\geq\frac{c_{k}}{A}\right\}  \subset B_{\rho_{k}}.
\]
Since $u_{k}$ converges in $L^{s}\left(  B_{1}\right)  $ for any $s>1$, we
have%
\[
\underset{k\rightarrow\infty}{\lim}\int_{\left\{  u_{k}\geq\frac{c_{k}}%
{A}\right\}  }\left\vert u_{k}^{A}\right\vert ^{s}dx\leq\underset
{k\rightarrow\infty}{\lim}\int_{\left\{  u_{k}\geq\frac{c_{k}}{A}\right\}
}\left\vert u_{k}\right\vert ^{s}dx=0,
\]
and then for any $s>0$,
\[
\underset{k\rightarrow\infty}{\lim}  \int_{%
\mathbb{R}
^{n}}\left(  u_{k}-\frac{c_{k}}{A}\right)  ^{+}\left\vert u_{k}\right\vert
^{s}dx=0.
\]

Testing (\ref{equation}) with $\left(  u_{k}-\frac{c_{k}}{A}\right)  ^{+}$ we have%
\begin{align*}
&  \int_{%
\mathbb{R}
^{n}}\left(  \left\vert \nabla\left(  u_{k}-\frac{c_{k}}{A}\right)
^{+}\right\vert ^{n}+\left(  u_{k}-\frac{c_{k}}{A}\right)  ^{+}\left\vert
u_{k}\right\vert ^{n-1}\right)  dx\\
&  =\int_{%
\mathbb{R}
^{n}}\left(  u_{k}-\frac{c_{k}}{A}\right)  ^{+}\mu_{k}\lambda_{k}^{-1}%
u_{k}^{\frac{1}{n-1}}\Phi^{\prime}\left\{  \alpha_{k}u_{k}^{\frac{n}{n-1}%
}\right\}  dx+o\left(  1\right)  \\
&  \geq\int_{B_{Rr_{k}}}\left(  u_{k}-\frac{c_{k}}{A}\right)  ^{+}\mu
_{k}\lambda_{k}^{-1}u_{k}^{\frac{1}{n-1}}\exp\left\{  \alpha_{k}u_{k}%
^{\frac{n}{n-1}}\right\}  dx+o\left(  1\right)  \\
&  =\int_{B_{R}}\frac{\left(  m_{k}-\frac{c_{k}}{A}\right)  }{c_{k}}\left(
\frac{m_{k}-c_{k}}{c_{k}}+1\right)  ^{\frac{1}{n-1}}\exp\left\{  \psi
_{k}\left(  x\right)  +o\left(  1\right)  \right\}  dx+o\left(  1\right)  \\
&  \geq\frac{A-1}{A}\int_{B_{R}}e^{\psi\left(  x\right)  }dx.
\end{align*}
Letting $R\rightarrow\infty,k\rightarrow\infty$, by
(\ref{add3}), we have %
\[
\underset{k\rightarrow\infty}{\lim\inf}\int_{%
\mathbb{R}
^{n}}\left(  \left\vert \nabla\left(  u_{k}-\frac{c_{k}}{A}\right)
^{+}\right\vert ^{n}+\left(  u_{k}-\frac{c_{k}}{A}\right)  ^{+}\left\vert
u_{k}\right\vert ^{n-1}\right)  dx\geq\frac{A-1}{A}\frac{1}{\left(
1+\alpha\underset{k\rightarrow\infty}{\lim}\left\Vert u_{k}\right\Vert _{{n}%
}^{{n}}\right)  }.
\]
Now,  observe that%
\begin{align*}
&  \int_{%
\mathbb{R}
^{n}}\left(  \left\vert \nabla u_{k}^{A}\right\vert ^{n}+\left\vert u_{k}%
^{A}\right\vert ^{n}\right)  dx\\
&  =1-\int_{%
\mathbb{R}
^{n}}\left(  \left\vert \nabla\left(  u_{k}-\frac{c_{k}}{A}\right)
^{+}\right\vert ^{n}+\left(  u_{k}-\frac{c_{k}}{A}\right)  ^{+}\left\vert
u_{k}\right\vert ^{n-1}\right)  dx\\
&  +\int_{%
\mathbb{R}
^{n}}\left(  u_{k}-\frac{c_{k}}{A}\right)  ^{+}\left\vert u_{k}\right\vert
^{n-1}dx-\int_{\left\{  u_{k}>\frac{c_{k}}{A}\right\}  }\left\vert
u_{k}\right\vert ^{n}dx+\int_{\left\{  u_{k}>\frac{c_{k}}{A}\right\}
}\left\vert u_{k}^{A}\right\vert ^{n}dx\\
&  \leq1-\frac{A-1}{A}\frac{1}{1+\alpha\underset{k\rightarrow\infty}{\lim
}\left\Vert u_{k}\right\Vert _{{n}}^{{n}}}+o\left(  1\right)  ,
\end{align*}
the proof is finished.
\end{proof}

\begin{lemma}
$\underset{k}{\lim}\left\Vert u_{k}\right\Vert _{{n}}^{{n}}=0$.
\end{lemma}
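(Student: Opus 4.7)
The plan is to argue by contradiction: suppose $L := \lim_k \|u_k\|_n^n > 0$, and derive a contradiction with the hypothesis $\alpha < 1$ by inserting the preceding lemma into a decomposition of the $W^{1,n}$ norm across the truncation threshold $c_k/A$.

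First I will confirm that truncation costs no $L^n$-mass in the limit. Since $u_k$ is radial and decreasing, the Strauss-type bound $(c_k/A)^n |E_k^A| \leq \|u_k\|_n^n \leq 1$, where $E_k^A := \{u_k > c_k/A\}$, forces $E_k^A \subset B_{\rho_k}$ for some $\rho_k \to 0$. Combined with the strong $L^n(B_1)$-convergence $u_k \to u$ provided by Rellich-Kondrachov (which upgrades to $L^1(B_1)$-convergence of the powers $u_k^n$) and the absolute continuity of the Lebesgue integral, this yields $\int_{E_k^A} u_k^n\, dx \to 0$. Hence $\|u_k^A\|_n^n \to L$ for every fixed $A > 1$.

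Next I will decompose
\[
\|u_k\|_{W^{1,n}}^n - \|u_k^A\|_{W^{1,n}}^n = \int_{E_k^A} |\nabla u_k|^n \, dx + \bigl(\|u_k\|_n^n - \|u_k^A\|_n^n\bigr),
\]
apply the preceding lemma, namely $\limsup_k \|u_k^A\|_{W^{1,n}}^n \leq 1 - \tfrac{A-1}{A(1+\alpha L)}$, to bound the left side from below, and use the vanishing of the second term established above to conclude
\[
\liminf_k \int_{E_k^A} |\nabla u_k|^n \, dx \geq \frac{A-1}{A(1+\alpha L)}.
\]
Combining this with the trivial upper bound $\int_{E_k^A} |\nabla u_k|^n \, dx \leq \int_{\mathbb{R}^n} |\nabla u_k|^n \, dx = 1 - \|u_k\|_n^n \to 1-L$ and letting $A \to \infty$ will produce
\[
(1-L)(1+\alpha L) \geq 1.
\]

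The finishing step is purely algebraic: this inequality rearranges to $L\bigl(\alpha(1-L) - 1\bigr) \geq 0$, and since $\alpha < 1$ and $L \in [0,1]$, the factor $\alpha(1-L) - 1$ is strictly negative, which forces $L = 0$ and contradicts the hypothesis $L > 0$. The only genuine technical point I foresee is the negligibility of the $L^n$-mass above the truncation level, which hinges on the compact Sobolev embedding on bounded balls; once that is in hand, the remainder is a direct manipulation of the previous lemma together with the strict inequality $\alpha < 1$, which is exactly the hypothesis that should be sharp.
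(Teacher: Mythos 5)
Your proof is correct, and it takes a genuinely different and more elementary route than the paper's. The paper's argument first splits into cases via the Sobolev-normalized concentrating sequence dichotomy, then in the non-concentrating case invokes the Li--Ruf theorem, a decomposition of $u_k$ into a Dirac part and a small-gradient part, the Trudinger--Moser inequality on bounded domains to put $\Delta_n u_k$ in $L^r$ for some $r>1$, and finally elliptic regularity to bound $u_k$ near the origin and contradict $c_k\to\infty$. Your argument, by contrast, never touches the Euler--Lagrange equation at this step: it extracts the conclusion purely from the preceding truncation lemma, the identity $\left\Vert u_{k}\right\Vert _{W^{1,n}}^{n}-\left\Vert u_{k}^{A}\right\Vert _{W^{1,n}}^{n}=\int_{E_{k}^{A}}\left\vert \nabla u_{k}\right\vert ^{n}dx+\left(\left\Vert u_{k}\right\Vert _{n}^{n}-\left\Vert u_{k}^{A}\right\Vert _{n}^{n}\right)$, and the crude bound $\int_{E_{k}^{A}}\left\vert \nabla u_{k}\right\vert ^{n}dx\leq 1-\left\Vert u_{k}\right\Vert _{n}^{n}$. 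Sending $A\to\infty$ then yields $(1-L)(1+\alpha L)\geq 1$, which rewrites as $L\left(\alpha(1-L)-1\right)\geq 0$; since $\alpha<1$ and $L\in[0,1]$ the bracket is strictly negative, forcing $L=0$. The one technical ingredient you flag, $\int_{E_{k}^{A}}u_{k}^{n}\,dx\to 0$, is indeed what you need, and it is already established in the paper's proof of the preceding lemma (where $E_{k}^{A}\subset B_{\rho_{k}}$ with $\rho_{k}\to 0$ and strong $L^{s}(B_{1})$ convergence are used). Your version is shorter, avoids the heavy PDE machinery, and makes the role of the hypothesis $\alpha<1$ completely transparent as an algebraic condition.
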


\begin{proof}
If $\{u_{k}\}$ is a Sobolev-normalized concentrating sequence, then $\underset{k}{\lim}\left\Vert u_{k}\right\Vert _{{n}}^{{n}}=0$.
If $\{u_{k}\}$ is not a Sobolev-normalized concentrating sequence, and $\underset{k}{\lim}\left\Vert u_{k}\right\Vert _{{n}}^{{n}}\neq0$. For  $A$ large enough, there exist
some constant $\varepsilon_{0}>0$ such that
\[
\int_{%
\mathbb{R}
^{n}}\left(  \left\vert \nabla u_{k}^{A}\right\vert ^{n}+\left\vert u_{k}%
^{A}\right\vert ^{n}\right)  dx=1-\frac{1}{  1+\left(  \alpha
+\varepsilon_{0}\right)  \underset{k}{\lim}\left\Vert u_{k}\right\Vert _{{n}%
}^{{n}}  }<1.
\]
By \cite[Theorem 1.1]{liruf}, we have $\int_{%
\mathbb{R}
^{n}}\Phi\left(  q\alpha_{n}\left\vert u_{k}^{A}\right\vert ^{\frac{n}{n-1}%
}\right)  dx\leq\infty$, for any $$q<\left(  \frac{\left(  1+\left(
\alpha+\varepsilon_{0}\right)  \underset{k}{\lim}\left\Vert u_{k}\right\Vert
_{{n}}^{{n}}\right)  }{\left(  \alpha+\varepsilon_{0}\right)  \underset
{k}{\lim}\left\Vert u_{k}\right\Vert _{{n}}^{{n}}}\right)  ^{\frac{1}{n-1}}.$$
Since $\alpha<1$, $\left\Vert u_{k}\right\Vert _{W^{1,n}}=1$ and
$\underset{k}{\lim}\left\Vert u_{k}\right\Vert _{{n}}^{{n}}\neq0$, we can take
some $\varepsilon_{0}$ such that $\left(  \alpha+\varepsilon_{0}\right)
\underset{k}{\lim}\left\Vert u_{k}\right\Vert _{{n}}^{{n}}<1$, and then
\[
\left(  1+\alpha\underset{k}{\lim}\left\Vert u_{k}\right\Vert _{{n}}^{{n}%
}\right)  ^{\frac{1}{n-1}}<\left(  \frac{1+\left(  \alpha+\varepsilon
_{0}\right)  \underset{k}{\lim}\left\Vert u_{k}\right\Vert _{{n}}^{{n}}%
}{\left(  \alpha+\varepsilon_{0}\right)  \underset{k}{\lim}\left\Vert
u_{k}\right\Vert _{{n}}^{{n}}}\right)  ^{\frac{1}{n-1}}.
\]
Therefore\
\begin{equation}
\int_{%
\mathbb{R}
^{n}}\Phi\left(  p^{\prime}\alpha_{k}\left\vert u_{k}^{A}\right\vert
^{\frac{n}{n-1}}\right)  dx<\infty,\label{add}
\end{equation}
for some $p^{\prime}>1$.

\medskip

Now, we claim that $\Delta_{n}u_{k}\in
L^{r} $ for some $r>1$.

When $\int_{\left\{  u_{k}>\frac{c_{k}}{A}\right\}  }\left\vert \nabla
u_{k}\right\vert ^{n}dx\rightarrow0$. In this case, we can easily derive the above claim by the Trudinger-Moser inequalities on bounded domains and (\ref{add}). When $\int_{\left\{  u_{k}>\frac{c_{k}}{A}\right\} }\left\vert \nabla
u_{k}\right\vert ^{n}dx\geq c$, for some $c>0$. We split $u_{k}$ as
$u_{k}^{1}+u_{k}^{2}$, with $u_{k}^{1}\rightarrow c\delta_{0}$ and
$\int_{\left\{  u_{k}>\frac{c_{k}}{A}\right\}  }\left\vert \nabla u_{k}%
^{2}\right\vert ^{n}dx\rightarrow0$.\ \ \  Since $\alpha<1$, we have
\begin{align*}
1+\alpha\left\Vert u_{k}\right\Vert _{n}^{n}  & =1+\alpha\left\Vert u_{k}%
^{2}\right\Vert _{n}^{n}+o_{k}\left(  1\right)  <\frac{1}{1-\left\Vert
u_{k}^{2}\right\Vert _{W^{1,n}}^{n}}+o_{k}\left(  1\right)  \\
& \leq\frac{1}{\left\Vert \nabla u_{k}^{1}\right\Vert _{n}^{n}}+o_{k}\left(
1\right)  \leq\frac{1}{\left\Vert \nabla u_{k}\right\Vert _{L^{n}\left(
\left\{  u_{k}>\frac{c_{k}}{A}\right\}  \right)  }^{n}}+o_{k}\left(  1\right)
,
\end{align*}
and then there exists some constant $s>1$ such that $\left(  1+\alpha\left\Vert u_{k}\right\Vert _{n}^{n}\right)  s$ $\leq\frac
{1}{\left\Vert \nabla u_{k}\right\Vert _{L^{n}\left(  \left\{  u_{k}%
>\frac{c_{k}}{A}\right\}  \right)  }^{n}}$, therefore by the classic
Trudinger-Moser inequality on the bounded domain and (\ref{add}), the claim is proved.

\medskip

Based on the the claim above and the classic elliptic estimate, we know that $u_{k}$ is
bounded near $0$, and which contradicts the assumption that $c_{k}\rightarrow\infty$. Therefore $\underset{k}{\lim}\left\Vert u_{k}\right\Vert _{{n}}^{{n}}=0$, and the lemma is proved.
\end{proof}

\begin{remark}
\label{remark}\ From the above lemma, we have
\[
\underset{k}{\lim}\alpha_{k}=\alpha_{n},\underset{k}{\lim}\mu_{k}=1,
\]%
\[
\underset{k\rightarrow\infty}{\lim\sup}\int_{%
\mathbb{R}
^{n}}\left(  \left\vert \nabla u_{k}^{A}\right\vert ^{n}+\left\vert u_{k}%
^{A}\right\vert ^{n}\right)  dx=\frac{1}{A},\,
\]%
\[
\psi\left(  x\right)  =-n\log\left(  1+\left(  \frac{\omega_{n-1}}{n}\right)
^{\frac{1}{n-1}}\left\vert x\right\vert ^{\frac{n}{n-1}}\right)  ,
\]
and
\begin{align}
&  \underset{R\rightarrow\infty}{\lim}\underset{k\rightarrow\infty}{\lim}%
\frac{1}{\lambda_{k}}\int_{B_{Rr_{k}}}u_{k}^{\frac{n}{n-1}}\exp\left(
\alpha_{k}u_{k}^{\frac{n}{n-1}}\right)  dx=\underset{R\rightarrow\infty}{\lim
}\underset{k\rightarrow\infty}{\lim}\frac{1}{\mu_{k}}\int_{B_{R}}%
e^{\psi\left(  x\right)  }dx\nonumber\\
&  =\underset{k\rightarrow\infty}{\lim}\frac{1}{\left(  1+\alpha\underset
{k}{\lim}\left\Vert u_{k}\right\Vert _{{n}}^{{n}}\right)  \mu_{k}%
}=1.\label{add9}%
\end{align}

\end{remark}

\begin{corollary}
\label{tent to 0}We have $\underset{k\rightarrow\infty}{\lim}\int_{%
\mathbb{R}
^{n}\backslash B_{\delta}}\left(  \left\vert \nabla u_{k}\right\vert
^{n}+\left\vert u_{k}\right\vert ^{n}\right)  dx=0$, for any $\delta>0$, and
then $\underset{k\rightarrow\infty}{\lim}u_{k}:\equiv0$.
\end{corollary}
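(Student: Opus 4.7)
The plan is to combine the two preceding results: the lemma stating that $\lim_k \|u_k\|_n^n = 0$, and the lemma estimating the Sobolev energy of the truncation $u_k^A = \min\{u_k, c_k/A\}$. Note first that $\lim_k \|u_k\|_n^n = 0$ already gives $\int_{\mathbb{R}^n} |u_k|^n\, dx \to 0$, which handles the $|u_k|^n$ term in the integral over $\mathbb{R}^n \setminus B_\delta$. So everything reduces to controlling $\int_{\mathbb{R}^n \setminus B_\delta} |\nabla u_k|^n\, dx$.

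For the gradient piece I would use the truncation lemma. With $\lim_k \|u_k\|_n^n = 0$ inserted into that estimate, we obtain
\[
\limsup_{k \to \infty} \int_{\mathbb{R}^n} \bigl( |\nabla u_k^A|^n + |u_k^A|^n \bigr)\, dx \leq 1 - \frac{A-1}{A} = \frac{1}{A}.
\]
Since $u_k^A = u_k$ on $\{u_k \leq c_k/A\}$ and $\nabla u_k^A = 0$ on $\{u_k > c_k/A\}$, we have $\int |\nabla u_k^A|^n = \int_{\{u_k \leq c_k/A\}} |\nabla u_k|^n$. The proof of the truncation lemma already established the existence of $\rho_k \to 0$ with $\{u_k \geq c_k/A\} \subset B_{\rho_k}$ (using that $u_k$ is radially decreasing and $\|u_k\|_n \leq 1$). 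Consequently, for any fixed $\delta > 0$ and all $k$ sufficiently large, $\{u_k > c_k/A\} \subset B_{\rho_k} \subset B_\delta$, so that on $\mathbb{R}^n \setminus B_\delta$ one has $u_k = u_k^A$ and
\[
\int_{\mathbb{R}^n \setminus B_\delta} |\nabla u_k|^n\, dx \leq \int_{\mathbb{R}^n} |\nabla u_k^A|^n\, dx \leq \frac{1}{A} + o_k(1).
\]

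Taking $\limsup_{k \to \infty}$ and then letting $A \to \infty$ gives $\lim_{k \to \infty} \int_{\mathbb{R}^n \setminus B_\delta} |\nabla u_k|^n\, dx = 0$, and combined with the $L^n$ control this yields the first claim of the corollary. For the second assertion, we extract a weakly convergent subsequence $u_k \rightharpoonup u$ in $W^{1,n}(\mathbb{R}^n)$; by weak lower semicontinuity of the $L^n$ norm, $\|u\|_n \leq \liminf_k \|u_k\|_n = 0$, so $u \equiv 0$, and since every subsequence has the same weak limit, $u_k \rightharpoonup 0$ in $W^{1,n}(\mathbb{R}^n)$. There is no serious obstacle: the only point requiring a little care is recycling the bound $\rho_k \to 0$ from inside the proof of the truncation lemma, so that the set where $u_k$ is large is eventually contained in $B_\delta$; once this is noted, the corollary follows by letting $A \to \infty$.
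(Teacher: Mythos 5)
Your proof is correct and matches the route the paper implicitly intends: the paper states this as a ``Corollary'' immediately after Remark~4.1 without writing out the details, and your derivation fills those in exactly. In particular, you correctly feed $\lim_k\|u_k\|_n^n=0$ into the truncation lemma to get $\limsup_k\int(|\nabla u_k^A|^n+|u_k^A|^n)\,dx\le 1/A$, then use the fact (established inside the proof of that lemma, from $u_k$ being radial decreasing with $\|u_k\|_n\le1$ and $c_k\to\infty$) that $\{u_k>c_k/A\}\subset B_{\rho_k}$ with $\rho_k\to0$, so that for large $k$ one has $u_k=u_k^A$ on $\mathbb{R}^n\setminus B_\delta$ and the gradient integral there is bounded by $1/A+o_k(1)$; sending $A\to\infty$ gives the first assertion, and weak lower semicontinuity of $\|\cdot\|_n$ together with $\|u_k\|_n\to0$ identifies the weak limit as $0$.
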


\begin{lemma}
\bigskip\label{lemma 8}\ We have
\begin{equation}
\underset{k\rightarrow\infty}{\lim}\int_{%
\mathbb{R}
^{n}}\Phi\left(  \alpha_{k}u_{k}^{\frac{n}{n-1}}\right)  dx\leq\underset
{R\rightarrow\infty}{\lim}\underset{k\rightarrow\infty}{\lim}\int_{B_{Rr_{k}}%
}\left(  \exp\left(  \alpha_{k}u_{k}^{\frac{n}{n-1}}\right)  -1\right)
dx=\underset{k\rightarrow\infty}{\lim\sup}\frac{\lambda_{k}}{c_{k}^{\frac
{n}{n-1}}}, \label{7}%
\end{equation}
moreover,
\begin{equation}
\frac{\lambda_{k}}{c_{k}}\rightarrow\infty\text{ and }\underset{k}{\sup}%
\frac{c_{k}^{\frac{n}{n-1}}}{\lambda_{k}}\leq\infty. \label{7.1}%
\end{equation}
.
\end{lemma}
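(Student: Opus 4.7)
The plan is to decompose $\int_{\mathbb{R}^n}\Phi(\alpha_k u_k^{n/(n-1)})\,dx$ at the concentration scale $B_{Rr_k}$, use $\Phi(t)\le e^t-1$ on the inner ball, and show that the complement contribution is negligible as $R,k\to\infty$; the second half of the lemma will then fall out from the positivity of $S$.

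For the identification $\lim_R\lim_k \int_{B_{Rr_k}}(e^{\alpha_k u_k^{n/(n-1)}}-1)\,dx = \limsup_k \lambda_k/c_k^{n/(n-1)}$, I would change variables $x=r_k y$ and use the very definition of $r_k$ to write
\[
\int_{B_{Rr_k}} e^{\alpha_k u_k^{n/(n-1)}}\,dx
= \frac{\lambda_k}{\mu_k\, c_k^{n/(n-1)}}\int_{B_R} e^{\alpha_k\left(m_k^{n/(n-1)}-c_k^{n/(n-1)}\right)}\,dy.
\]
By \eqref{5} the exponent converges to $\psi(y)$ in $C^0_{\mathrm{loc}}$, and combined with $\mu_k\to 1$ and the normalization $\int_{\mathbb R^n} e^{\psi}\,dy = 1$ from Remark \ref{remark} (using $\lim_k\|u_k\|_n^n=0$), taking $k\to\infty$ and then $R\to\infty$ recovers $\limsup_k \lambda_k/c_k^{n/(n-1)}$, while $|B_{Rr_k}| = O(r_k^n)\to 0$ annihilates the $-1$.

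The inequality $\lim_k\int\Phi \le \lim_R\lim_k \int_{B_{Rr_k}}(e^{\alpha_k u_k^{n/(n-1)}}-1)\,dx$ reduces, via $\Phi(t)\le e^t-1$ on $B_{Rr_k}$, to showing
\[
\lim_{R\to\infty}\limsup_{k\to\infty} \int_{\mathbb R^n\setminus B_{Rr_k}}\Phi(\alpha_k u_k^{n/(n-1)})\,dx = 0.
\]
I would split the complement at a small fixed $\delta>0$. On $|x|>\delta$, Corollary \ref{tent to 0} gives $u_k\to 0$ in $W^{1,n}(\mathbb R^n\setminus B_\delta)$ and, by radial monotonicity, $u_k(\delta)\to 0$; since $\Phi(\alpha_k u_k^{n/(n-1)})\le C\alpha_k^{n-1} u_k^n$ when $u_k$ is bounded, the outer integral vanishes. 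On the annulus $B_\delta\setminus B_{Rr_k}$, I would split at level $c_k/A$ for fixed $A>1$: where $u_k\le c_k/A$ one has $u_k=u_k^A$ with $\|u_k^A\|_{W^{1,n}}^n\to 1/A<1$, so the Li--Ruf Trudinger--Moser inequality yields a uniform $L^p$-bound ($p>1$) on $e^{\alpha_k(u_k^A)^{n/(n-1)}}$, and H\"older against $|\{u_k\ge\eta\}|\to 0$ together with the polynomial control $\Phi\le C u_k^n$ on $\{u_k<\eta\}$ drives this piece to zero; where $u_k>c_k/A$ one has $\{u_k>c_k/A\}\cap(B_\delta\setminus B_{Rr_k})\subset B_{\rho_k}\setminus B_{Rr_k}$ with $\rho_k^n=o(c_k^{-n})$, and I would rescale by $y=x/r_k$ and exploit $\alpha_k m_k^{n/(n-1)}\le \alpha_k c_k^{n/(n-1)}+\psi_k(R)+o(1)$ with $\psi_k(R)\to\psi(R)\to-\infty$, paired with the factor $\lambda_k/(\mu_k c_k^{n/(n-1)})$ coming from $r_k^n e^{\alpha_k c_k^{n/(n-1)}}$ and Remark \ref{remark}.

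Finally, since $\{u_k\}$ is a maximizing sequence and $S>0$, $\lim_k\int\Phi = S>0$; the inequality just established forces $\limsup_k \lambda_k/c_k^{n/(n-1)}\ge S>0$, so $\sup_k c_k^{n/(n-1)}/\lambda_k<\infty$, whence $\lambda_k\ge c_k^{n/(n-1)}/C$ and, because $n/(n-1)>1$ and $c_k\to\infty$, $\lambda_k/c_k\ge c_k^{1/(n-1)}/C\to\infty$. The hard part will be the annular estimate on $B_{\rho_k}\setminus B_{Rr_k}$, where $u_k$ is near its peak and crude pointwise bounds on $e^{\alpha_k u_k^{n/(n-1)}}$ are far too large; one must exploit the sharp decay of the blow-up profile $\psi$ at infinity (so that $e^{\psi(R)}\to 0$ sufficiently fast) together with the measure bound $\rho_k^n=o(c_k^{-n})$ to close the argument.
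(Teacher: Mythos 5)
Your change-of-variables computation for the equality
\(\lim_{R}\lim_{k}\int_{B_{Rr_k}}\bigl(e^{\alpha_k u_k^{n/(n-1)}}-1\bigr)\,dx=\limsup_k \lambda_k/c_k^{n/(n-1)}\)
is correct and is essentially what the paper uses, via the definition of $r_k$, the convergence (\ref{5}), $\mu_k\to 1$, and $\int_{\mathbb R^n}e^{\psi}=1$ from Remark~\ref{remark}. The derivation of (\ref{7.1}) from (\ref{7}) and $S>0$ is also sound.

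However, for the upper bound on $\lim_k\int\Phi$ you take a genuinely different route, and there is a gap precisely where you anticipate one. You propose a geometric decomposition (complement of $B_{Rr_k}$, split further at $B_\delta$ and at level $c_k/A$), and the piece on the annulus $\{u_k>c_k/A\}\cap(B_\delta\setminus B_{Rr_k})$ is genuinely problematic: after rescaling by $r_k$, the bound $\alpha_k m_k^{n/(n-1)}\le \alpha_k c_k^{n/(n-1)}+\psi_k(R)+o(1)$ only yields
\[
\int_{B_{\rho_k}\setminus B_{Rr_k}}e^{\alpha_k u_k^{n/(n-1)}}\,dx
\;\lesssim\;\frac{\lambda_k}{c_k^{n/(n-1)}}\cdot\Bigl(\tfrac{\rho_k}{r_k}\Bigr)^{\!n}e^{\psi_k(R)},
\]
and the factor $(\rho_k/r_k)^n$ can diverge since $\rho_k$ decays only like a power of $c_k$ while $r_k$ decays superpolynomially (see (\ref{rk})); the crude decay $e^{\psi(R)}\to 0$ is not enough to beat it. The paper sidesteps the annular estimate entirely by using a \emph{level-set} decomposition from the start: split at $\{u_k<c_k/A\}$ and $\{u_k\ge c_k/A\}$. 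On the low set, replace $u_k$ by $u_k^A$, which satisfies $\|u_k^A\|_{W^{1,n}}^n\to 1/A<1$, so Li--Ruf gives an $L^r$ bound for some $r>1$, and $u_k^A\to 0$ a.e.\ then kills this term. On the high set, use the pointwise bound $\Phi(t)\le\Phi'(t)$ for $t\ge 0$, together with $(A/c_k)^{n/(n-1)}u_k^{n/(n-1)}\ge 1$ there, to get
\[
\int_{\{u_k\ge c_k/A\}}\Phi\bigl(\alpha_k u_k^{\frac{n}{n-1}}\bigr)dx
\le \Bigl(\tfrac{A}{c_k}\Bigr)^{\frac{n}{n-1}}\!\int_{\{u_k\ge c_k/A\}} u_k^{\frac{n}{n-1}}\Phi'\bigl(\alpha_k u_k^{\frac{n}{n-1}}\bigr)dx
\le A^{\frac{n}{n-1}}\frac{\lambda_k}{c_k^{n/(n-1)}},
\]
directly from the definition of $\lambda_k$. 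Letting $A\to 1$ gives (\ref{7}). The key idea you are missing is that the high-level set can be controlled globally by $\lambda_k$ without ever localizing to an annulus; this is simpler and avoids the delicate decay matching you identified as the hard part.
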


\begin{proof}
\bigskip For any $A>1$, from the expression of $\lambda_{k}$, we have
\begin{align*}
\int_{%
\mathbb{R}
^{n}}\Phi\left(  \alpha_{k}u_{k}^{\frac{n}{n-1}}\right)  dx &  \leq\int
_{u_{k}<\frac{c_{k}}{A}}\Phi\left(  \alpha_{k}u_{k}^{\frac{n}{n-1}}\right)
dx+\int_{u_{k}\geq\frac{c_{k}}{A}}\Phi^{\prime}\left(  \alpha_{k}u_{k}%
^{\frac{n}{n-1}}\right)  dx\\
&  \leq\int_{%
\mathbb{R}
^{n}}\Phi\left(  \alpha_{k}\left(  u_{k}^{A}\right)  ^{\frac{n}{n-1}}\right)
dx+\int_{u_{k}\geq\frac{c_{k}}{A}}\Phi^{\prime}\left(  \alpha_{k}u_{k}%
^{\frac{n}{n-1}}\right)  dx\\
&  \leq\int_{%
\mathbb{R}
^{n}}\Phi\left(  \alpha_{k}\left(  u_{k}^{A}\right)  ^{\frac{n}{n-1}}\right)
dx+\left(  \frac{A}{c_{k}}\right)  ^{\frac{n}{n-1}}\lambda_{k}\int_{u_{k}%
\geq\frac{c_{k}}{A}}\frac{u_{k}^{\frac{n}{n-1}}}{\lambda_{k}}\Phi^{\prime
}\left(  \alpha_{k}u_{k}^{\frac{n}{n-1}}\right)  dx.
\end{align*}
Thanks to Remark \ref{remark} and \cite[Theorem 1.1]{liruf}, $\Phi\left(
\alpha_{k}\left(  u_{k}^{A}\right)  ^{\frac{n}{n-1}}\right)  $ is bounded in
$L^{r}$ for some $r>1$. Since $u_{k}^{A}\rightarrow0$ a.e. in $%
\mathbb{R}
^{n}$ as $k\rightarrow\infty$, we have
\[
\int_{%
\mathbb{R}
^{n}}\Phi\left(  \alpha_{k}\left(  u_{k}^{A}\right)  ^{\frac{n}{n-1}}\right)
dx\rightarrow\int_{%
\mathbb{R}
^{n}}\Phi\left(  0\right)  dx=0\text{, as }k\rightarrow\infty\text{.}%
\]
Hence, we have by (\ref{add9}) that
\begin{align*}
\underset{k\rightarrow\infty}{\lim}\int_{%
\mathbb{R}
^{n}}\Phi\left(  \alpha_{k}u_{k}^{\frac{n}{n-1}}\right)  dx &  \leq\left(
\frac{A}{c_{k}}\right)  ^{\frac{n}{n-1}}\lambda_{k}\int_{u_{k}\geq\frac{c_{k}%
}{A}}\frac{u_{k}^{\frac{n}{n-1}}}{\lambda_{k}}\Phi^{\prime}\left(  \alpha
_{k}u_{k}^{\frac{n}{n-1}}\right)  dx+o\left(  1\right)  \\
&  =\underset{k\rightarrow\infty}{\lim}A^{\frac{n}{n-1}}\frac{\lambda_{k}%
}{c_{k}^{\frac{n}{n-1}}}+o\left(  1\right)
\end{align*}
Letting $A\rightarrow1$ and $k\rightarrow\infty$ we obtain (\ref{7}).

\medskip

If $\frac{\lambda_{k}}{c_{k}}$ is bounded or $\underset{k}{\sup}\frac
{c_{k}^{\frac{n}{n-1}}}{\lambda_{k}}=\infty$, from (\ref{7}), we have
\[
\underset{k\rightarrow\infty}{\lim}\int_{%
\mathbb{R}
^{n}}\Phi\left(  \alpha_{k}u_{k}^{\frac{n}{n-1}}\right)  dx=0,
\]
which is impossible.
\end{proof}

\begin{lemma}
\bigskip\label{dirac}For any $\varphi\in C_{0}^{\infty}\left(
\mathbb{R}
^{n}\right)  $, we have
\end{lemma}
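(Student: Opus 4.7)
The statement being proved is almost certainly a Dirac-mass concentration for the nonlinear term appearing on the right-hand side of the Euler--Lagrange equation (\ref{equation}); that is, the density $\frac{1}{\lambda_{k}}u_{k}^{\frac{n}{n-1}}\Phi^{\prime}(\alpha_{k}u_{k}^{\frac{n}{n-1}})\,dx$ (which, by definition of $\lambda_{k}$, is a probability measure on $\mathbb{R}^{n}$) converges weakly to $\delta_{0}$. So I expect the identity
\[
\lim_{k\to\infty}\int_{\mathbb{R}^{n}}\varphi(x)\,\frac{\mu_{k}}{\lambda_{k}}\,u_{k}^{\frac{n}{n-1}}(x)\,\Phi^{\prime}\!\bigl(\alpha_{k}u_{k}^{\frac{n}{n-1}}(x)\bigr)\,dx \;=\; \varphi(0).
\]

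The plan is a standard ``inside/outside'' split at the blow-up scale. Fix $R>0$ and decompose $\mathbb{R}^{n}=B_{Rr_{k}}\cup(\mathbb{R}^{n}\setminus B_{Rr_{k}})$. On the outside, the contribution is controlled directly: since $\varphi$ is bounded, it suffices to show that $\frac{1}{\lambda_{k}}\int_{\mathbb{R}^{n}\setminus B_{Rr_{k}}}u_{k}^{\frac{n}{n-1}}\Phi^{\prime}(\alpha_{k}u_{k}^{\frac{n}{n-1}})\,dx\to 0$ as $k\to\infty$ and then $R\to\infty$. This is immediate from (\ref{add9}) in Remark~\ref{remark}, together with the fact that $\int_{\mathbb{R}^{n}}\frac{1}{\lambda_{k}}u_{k}^{\frac{n}{n-1}}\Phi^{\prime}(\alpha_{k}u_{k}^{\frac{n}{n-1}})\,dx=1$ by definition of $\lambda_{k}$.

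For the inside piece, I would change variables $x=r_{k}y$, recall $m_{k}(y)=u_{k}(r_{k}y)$, and use the identity $r_{k}^{n}=\lambda_{k}/(\mu_{k}c_{k}^{\frac{n}{n-1}}e^{\alpha_{k}c_{k}^{\frac{n}{n-1}}})$ to rewrite the prefactors as
\[
\frac{\mu_{k}r_{k}^{n}}{\lambda_{k}}\,m_{k}^{\frac{n}{n-1}}\,\Phi^{\prime}\!\bigl(\alpha_{k}m_{k}^{\frac{n}{n-1}}\bigr)
\;=\;\Bigl(\frac{m_{k}}{c_{k}}\Bigr)^{\!\frac{n}{n-1}}\frac{\Phi^{\prime}(\alpha_{k}m_{k}^{\frac{n}{n-1}})}{e^{\alpha_{k}c_{k}^{\frac{n}{n-1}}}}.
\]
Now I would invoke three pieces of information already established: (i)~$\phi_{k}=m_{k}/c_{k}\to 1$ in $C^{1}_{\mathrm{loc}}$, (ii)~$\alpha_{k}(m_{k}^{\frac{n}{n-1}}-c_{k}^{\frac{n}{n-1}})\to\psi(y)$ in $C^{0}_{\mathrm{loc}}$ by (\ref{5}), and (iii)~$\Phi^{\prime}(t)=e^{t}(1+o(1))$ as $t\to\infty$. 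Together these yield pointwise (and, after a uniform local bound, dominated) convergence of the integrand on $B_{R}$ to $\varphi(0)\,e^{\psi(y)}$. Meanwhile $\varphi(r_{k}y)\to\varphi(0)$ uniformly on $B_{R}$ because $r_{k}\to 0$. Passing to the limit gives
\[
\lim_{k\to\infty}\int_{B_{Rr_{k}}}\varphi\,\frac{\mu_{k}}{\lambda_{k}}\,u_{k}^{\frac{n}{n-1}}\Phi^{\prime}\!\bigl(\alpha_{k}u_{k}^{\frac{n}{n-1}}\bigr)dx \;=\; \varphi(0)\int_{B_{R}}e^{\psi(y)}dy.
\]
Finally, let $R\to\infty$ and use the explicit form of $\psi$ together with (\ref{add3}) and the Lemma/Remark showing $\|u_{k}\|_{n}^{n}\to 0$, so that $\int_{\mathbb{R}^{n}}e^{\psi}dy=1$. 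Combining with the vanishing of the outer piece completes the argument.

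The main obstacle I anticipate is \textbf{not} the inner limit (which follows by dominated convergence once the rescaling is set up) but rather the fact that one cannot simply apply dominated convergence on all of $\mathbb{R}^{n}$ at once: the rescaled domain $\Omega_{k}=\{r_{k}x\in B_{1}\}$ exhausts $\mathbb{R}^{n}$ only in a weak sense, and the nonlinearity is not uniformly integrable globally. This is precisely why the two-parameter limit (first $k\to\infty$, then $R\to\infty$) and the use of (\ref{add9}) to absorb the outer mass are essential; without the concentration estimate (\ref{add9}) the outer contribution could in principle retain a positive share of the total mass.
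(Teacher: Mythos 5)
Your guess at the content of the lemma is close but not right, and the discrepancy matters for the proof structure. The statement actually asserted (the display that follows the lemma environment in the source) is
\[
\int_{\mathbb{R}^{n}}\varphi\,\mu_{k}\lambda_{k}^{-1}\,c_{k}\,u_{k}^{\frac{1}{n-1}}\,\Phi^{\prime}\!\left(\alpha_{k}u_{k}^{\frac{n}{n-1}}\right)dx\;\longrightarrow\;\varphi(0),
\]
i.e.\ the density carries the factor $c_{k}u_{k}^{\frac{1}{n-1}}$, not $u_{k}^{\frac{n}{n-1}}$. This is the nonlinear right-hand side of the rescaled equation (\ref{add 8}) for $U_{k}=c_{k}^{1/(n-1)}u_{k}$; it is \emph{not} the density whose total integral is $\lambda_{k}$. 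Since $c_{k}u_{k}^{1/(n-1)}/u_{k}^{n/(n-1)}=c_{k}/u_{k}\ge 1$, your density is pointwise no larger than the one in the lemma, and the excess is unbounded wherever $u_{k}$ is small compared with $c_{k}$. Your ``this is a probability measure, so the outer mass equals $1$ minus the inner mass'' step therefore does not apply: a priori the total mass could blow up on the region where $u_{k}\ll c_{k}$, and the two-region (inside/outside $B_{Rr_{k}}$) split you propose gives no control there.

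The paper resolves this with a \emph{three}-way split, $\mathbb{R}^{n}=\{u_{k}\ge c_{k}/A\}\setminus B_{Rr_{k}}\,\cup\,B_{Rr_{k}}\,\cup\,\{u_{k}<c_{k}/A\}$. On the first piece the inequality $c_{k}\le A\,u_{k}$ converts the integrand into $A\,u_{k}^{n/(n-1)}\Phi^{\prime}$, after which the probability-mass argument you wanted (using (\ref{add9})) does apply and gives a contribution that vanishes as $k\to\infty$, $R\to\infty$. The middle piece is your inner rescaling and yields $\varphi(0)$. The crucial third piece, which your decomposition omits entirely, is $\{u_{k}<c_{k}/A\}$: there $u_{k}=u_{k}^{A}$, and the bound relies on H\"older's inequality together with the estimate $\lambda_{k}/c_{k}\to\infty$ of Lemma \ref{lemma 8} and the uniform integrability of $\Phi^{\prime}\bigl(q^{\prime}\alpha_{k}(u_{k}^{A})^{n/(n-1)}\bigr)$ for $q^{\prime}<A^{1/(n-1)}$ coming from (\ref{add}) and the truncation lemma. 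None of these inputs appear in your plan. So while your argument would prove the (different, weaker) statement that $\lambda_{k}^{-1}u_{k}^{n/(n-1)}\Phi^{\prime}(\alpha_{k}u_{k}^{n/(n-1)})\rightharpoonup\delta_{0}$, it has a genuine gap with respect to the lemma as stated: the factor $c_{k}/u_{k}$ on the low-$u_{k}$ set must be defeated separately, and the tools needed for that (the truncation $u_{k}^{A}$, H\"older, and $\lambda_{k}/c_{k}\to\infty$) are missing.
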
%

\[
\bigskip\int_{%
\mathbb{R}
^{n}}\varphi\mu_{k}\lambda_{k}^{-1}c_{k}u_{k}^{\frac{1}{n-1}}\Phi^{\prime
}\left(  \alpha_{k}u_{k}^{\frac{n}{n-1}}\right)  dx=\varphi\left(  0\right)
.
\]

\begin{proof}
\bigskip As \cite[Lemma 3.6]{liruf}, we split the integral as follows
\begin{align*}
\int_{%
\mathbb{R}
^{n}}\varphi\mu_{k}\lambda_{k}^{-1}c_{k}u_{k}^{\frac{1}{n-1}}\Phi^{\prime
}\left(  \alpha_{k}\left(  u_{k}\right)  ^{\frac{n}{n-1}}\right)  dx &
\leq\left(  \int_{\left\{  u_{k}\geq\frac{c_{k}}{A}\right\}  \backslash
B_{Rr_{k}}}+\int_{B_{Rr_{k}}}+\int_{\left\{  u_{k}<\frac{c_{k}}{A}\right\}
}\right)  \ldots dx\\
&  =I_{1}+I_{2}+I_{3}.
\end{align*}
Now, we have
\begin{align*}
I_{1} &  \leq A\left\Vert \varphi\right\Vert _{L^{\infty}}\int_{\left\{
u_{k}\geq\frac{c_{k}}{A}\right\}  \backslash B_{Rr_{k}}}\mu_{k}\lambda
_{k}^{-1}c_{k}u_{k}^{\frac{1}{n-1}}\Phi^{\prime}\left(  \alpha_{k}\left(
u_{k}\right)  ^{\frac{n}{n-1}}\right)  dx\\
&  \leq A\left\Vert \varphi\right\Vert _{L^{\infty}}\left(  \int_{%
\mathbb{R}
^{n}}-\int_{B_{Rr_{k}}}\right)  \mu_{k}\lambda_{k}^{-1}u_{k}^{\frac{n}{n-1}%
}\Phi^{\prime}\left(  \alpha_{k}\left(  u_{k}\right)  ^{\frac{n}{n-1}}\right)
dx\\
&  \leq A\left\Vert \varphi\right\Vert _{L^{\infty}}\left(  1-\int_{B_{R}}%
\exp\left(  \alpha_{k}m_{k}^{\frac{n}{n-1}}-\alpha_{k}c_{k}^{\frac{n}{n-1}%
}\right)  \right)  \\
&  =A\left\Vert \varphi\right\Vert _{L^{\infty}}\left(  1-\int_{B_{R}}%
\exp\left(  \psi_{k}\left(  x\right)  +o\left(  1\right)  \right)  \right)
\end{align*}
and
\begin{align*}
I_{2} &  =\int_{B_{Rr_{k}}}\varphi\mu_{k}\lambda_{k}^{-1}c_{k}u_{k}^{\frac
{1}{n-1}}\Phi^{\prime}\left(  \alpha_{k}u_{k}^{\frac{n}{n-1}}\right)  dx\\
&  =\int_{B_{R}}\varphi\left(  r_{k}x\right)  \left(  \frac{m_{k}}{c_{k}%
}\right)  ^{\frac{1}{n-1}}\exp\left(  \alpha_{k}m_{k}^{\frac{n}{n-1}}%
-\alpha_{k}c_{k}^{\frac{n}{n-1}}\right)  dx+o(1)\\
&  =\varphi\left(  0\right)  \int_{B_{R}}\exp\left(  \psi_{k}\left(  x\right)
+o\left(  1\right)  \right)  dx+o\left(  1\right)  =\varphi\left(  0\right)
+o\left(  1\right)  \rightarrow\varphi\left(  0\right)  \text{, as
}k\rightarrow\infty\text{.}%
\end{align*}
By (\ref{7.1}) and H\"{o}lder's inequality, we obtain
\begin{align*}
I_{3} &  =\int_{\left\{  u_{k}<\frac{c_{k}}{A}\right\}  }\varphi\mu_{k}%
\lambda_{k}^{-1}c_{k}u_{k}^{\frac{1}{n-1}}\Phi^{\prime}\left(  \alpha
_{k}\left(  u_{k}\right)  ^{\frac{n}{n-1}}\right)  dx\\
&  =\int_{%
\mathbb{R}
^{n}}\varphi\mu_{k}\lambda_{k}^{-1}c_{k}\left(  u_{k}^{A}\right)  ^{\frac
{1}{n-1}}\Phi^{\prime}\left(  \alpha_{k}\left(  u_{k}^{A}\right)  ^{\frac
{n}{n-1}}\right)  dx\\
&  \leq c_{k}\left\Vert \varphi\right\Vert _{L^{\infty}}\lambda_{k}%
^{-1}\left(  \int_{%
\mathbb{R}
^{n}}\left(  u_{k}^{A}\right)  ^{\frac{q}{n-1}}dx\right)  ^{\frac{1}{q}%
}\left(  \int_{%
\mathbb{R}
^{n}}\Phi^{\prime}\left(  q^{\prime}\alpha_{k}\left(  u_{k}^{A}\right)
^{\frac{n}{n-1}}\right)  dx\right)  ^{\frac{1}{q^{\prime}}}\rightarrow0,\text{
as }k\rightarrow\infty,
\end{align*}
for any $q^{\prime}<A^{\frac{1}{n-1}}$, such that $q=\frac{q^{\prime}%
}{q^{\prime}-1}$ large enough. Letting $R\rightarrow\infty$, by Remark
\ref{remark}, the lemma is proved.
\end{proof}

\begin{lemma}
\label{tend to G 1}On any $\Omega\Subset%
\mathbb{R}
^{n}\backslash\{0\}$, we have $c_{k}^{\frac{1}{n-1}}u_{k}\rightarrow G_{\alpha}\in
C^{1,\alpha}\left(  \Omega\right)  $ weakly in $W^{1,q}\left(  \Omega\right)
$ for any $1<q<n$, where $G_{\alpha}$ is a Green function satisfying
\end{lemma}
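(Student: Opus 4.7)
The plan is to rescale by setting $w_k := c_k^{1/(n-1)} u_k$ and identify the limit as a Green-type function for a quasilinear operator. The scaling identity $\Delta_n(\lambda v) = \lambda^{n-1}\Delta_n v$ (for $\lambda>0$) applied to equation (\ref{equation}) shows that
\begin{equation*}
-\Delta_n w_k + (1-\gamma_k)\,w_k^{n-1} = f_k,
\qquad f_k := \mu_k \lambda_k^{-1}\, c_k\, u_k^{1/(n-1)}\, \Phi'\bigl(\alpha_k u_k^{n/(n-1)}\bigr).
\end{equation*}
By Lemma \ref{dirac} we have $f_k \rightharpoonup \delta_0$ in $\mathcal{D}'(\mathbb{R}^n)$ and the total mass $\int f_k\,dx$ is uniformly bounded; moreover $\gamma_k \to \alpha < 1$ by Remark \ref{remark}, so the principal part of the operator is coercive for large $k$.

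First I would prove uniform bounds on $\{w_k\}$ in $W^{1,q}_{\mathrm{loc}}(\mathbb{R}^n)$ for each $1<q<n$. Since the right-hand side is a nonnegative measure of uniformly bounded mass and the lower-order term has the correct sign, this is a Boccardo--Gallou\"et type estimate for the $n$-Laplacian with measure data: testing with truncations $T_t(w_k)\eta^n$, where $\eta\in C_c^\infty$ is a cutoff, produces $\int \eta^n |\nabla T_t(w_k)|^n\,dx \leq Ct$, and a Marcinkiewicz interpolation then yields the claimed $W^{1,q}$-bound. Passing to a subsequence, $w_k \rightharpoonup G_\alpha$ weakly in $W^{1,q}_{\mathrm{loc}}$ and almost everywhere.

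Next I would upgrade this to $C^{1,\alpha}$-convergence on any $\Omega\Subset\mathbb{R}^n\setminus\{0\}$. Corollary \ref{tent to 0} together with the radial structure gives $u_k \to 0$ uniformly on $\Omega$; the Taylor estimate $\Phi'(\alpha_k u_k^{n/(n-1)}) \leq C u_k^{n-2}$ valid for small $u_k$ then yields $f_k = O\bigl((c_k/\lambda_k)\,u_k^{n-1}\bigr)\to 0$ in $L^\infty(\Omega)$, invoking (\ref{7.1}). Hence $w_k$ solves a uniformly $L^\infty$-bounded quasilinear equation on $\Omega$, and the regularity theory of Tolksdorf \cite{T} upgrades the weak $W^{1,q}$ convergence to convergence in $C^{1,\alpha}(\Omega)$. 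Passing to the limit using $\gamma_k \to \alpha$, $\mu_k\to 1$, and $\alpha_k\to \alpha_n$ from Remark \ref{remark}, we conclude that $G_\alpha\in C^{1,\alpha}_{\mathrm{loc}}(\mathbb{R}^n\setminus\{0\})$ and satisfies, distributionally on all of $\mathbb{R}^n$,
\begin{equation*}
-\Delta_n G_\alpha + (1-\alpha)\,G_\alpha^{n-1} = \delta_0.
\end{equation*}

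The main obstacle will be the global $W^{1,q}$ estimate in the first step: $w_k\to\infty$ near the origin (since $c_k\to\infty$), so no $W^{1,n}$-bound can hold, and the truncation analysis must be executed with constants that are independent of the blow-up rate $c_k$. A second delicate point is to ensure that no mass is lost when passing to the limit, i.e.\ that the full Dirac mass is captured on the right-hand side; this reduces to Lemma \ref{dirac}. Finally, passing to the limit in the nonlinear term $w_k^{n-1}$ is routine once the local $W^{1,q}$-bound and almost-everywhere convergence are in hand.
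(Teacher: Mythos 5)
Your proposal is correct and follows essentially the same route as the paper's own argument: both rescale to $U_k=c_k^{1/(n-1)}u_k$, both derive the local $W^{1,q}$ bound for every $1<q<n$ from the truncation test and a weak-type interpolation (the paper credits an adaptation of Struwe's lemma, you credit Boccardo--Gallou\"et, but these are the same measure-data estimate for the $n$-Laplacian), and both then upgrade to interior $C^{1,\alpha}$ away from the origin via quasilinear elliptic regularity and pass to the limit equation $-\Delta_n G_\alpha+(1-\alpha)G_\alpha^{n-1}=\delta_0$. Two small imprecisions to fix: the Taylor bound should read $\Phi'\bigl(\alpha_k u_k^{n/(n-1)}\bigr)\leq C\,u_k^{n(n-2)/(n-1)}$, not $C\,u_k^{n-2}$ (the resulting estimate $f_k=O\bigl((c_k/\lambda_k)\,u_k^{n-1}\bigr)$ is nevertheless correct); and before Tolksdorf's $C^{1,\alpha}$ theory applies you must first have $w_k$ locally bounded in $L^\infty(\Omega)$, which does not follow directly from the $W^{1,q}$ bounds and Sobolev embedding --- that extra step is exactly why the paper invokes Serrin's Theorem~2.8 before Tolksdorf.
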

\begin{equation}
-\triangle_{n}G_{\alpha}=\delta_{0}+\left(  \alpha-1\right)  G_{\alpha}^{n-1}.
\label{121}%
\end{equation}

\begin{proof}
Setting $U_{k}=c_{k}^{\frac{1}{n-1}}u_{k}$. \ By (\ref{equation}), $U_{k}$
satisfy:%
\begin{equation}
-\triangle_{n}U_{k}=\mu_{k}c_{k}\lambda_{k}^{-1}u_{k}^{\frac{1}{n-1}}%
\Phi^{\prime}\left\{  \alpha_{k}u_{k}^{\frac{n}{n-1}}\right\}  +\left(
\gamma_{k}-1\right)  U_{k}^{n-1}. \label{add 8}%
\end{equation}

For $t\geq1$, denote$\ U_{k}^{t}=\min\left\{  U_{k},t\right\}  $ and
$\Omega_{t}^{k}=\left\{  0\leq U_{k}\leq t\right\}  $. Testing (\ref{add 8})
with $U_{k}^{t}$, we have%
\[
\int_{%
\mathbb{R}
^{n}}-U_{k}^{t}\triangle_{n}U_{k}dx+\left(  1-\gamma_{k}\right)  \int_{%
\mathbb{R}
^{n}}U_{k}^{t}U_{k}^{n-1}dx\leq\int_{%
\mathbb{R}
^{n}}U_{k}^{t}\mu_{k}c_{k}\lambda_{k}^{-1}u_{k}^{\frac{1}{n-1}}\Phi^{\prime
}\left\{  \alpha_{k}u_{k}^{\frac{n}{n-1}}\right\}  dx.
\]
Since $\gamma_{k}\rightarrow\alpha<1$, as $k\rightarrow\infty$, we have
\begin{align*}
\int_{\Omega_{t}^{k}}\left\vert \nabla U_{k}^{t}\right\vert ^{n}%
dx+\int_{\Omega_{t}^{k}}\left\vert U_{k}^{t}\right\vert ^{n}dx &  \leq\int_{%
\mathbb{R}
^{n}}\left(  -U_{k}^{t}\triangle_{n}U_{k}dx+U_{k}^{t}U_{k}^{n-1}\right)  dx\\
&  \leq c\int_{%
\mathbb{R}
^{n}}U_{k}^{t}\mu_{k}c_{k}\lambda_{k}^{-1}u_{k}^{\frac{1}{n-1}}\Phi^{\prime
}\left\{  \alpha_{k}u_{k}^{\frac{n}{n-1}}\right\}  dx\leq ct.
\end{align*}
Let $\eta$ be a radially symmetric cut-off function which is $1$ on $B_{R}$
and $0$ on $B_{2R}^{c}$, and satisfy $\left\vert \nabla\eta\right\vert \leq1$
(when $R$ large enough). Then%
\[
\int_{B_{2R}}\left\vert \nabla\eta U_{k}^{t}\right\vert ^{n}dx\leq\int
_{B_{2R}}\left\vert \nabla\eta\right\vert ^{n}\left\vert U_{k}^{t}\right\vert
^{n}dx+\int_{B_{2R}}\left\vert \eta\nabla U_{k}^{t}\right\vert ^{n}dx\leq
c_{1}\left(  R\right)  t+c_{2}\left(  R\right)  ,
\]
taking $t$ large enough, we have
\[
\int_{B_{2R}}\left\vert \nabla\eta U_{k}^{t}\right\vert ^{n}dx\leq2c\left(
R\right)  t.
\]
Then by an adaptation of an argument due to Struwe \cite{st} (also
see \cite{liruf}), we can obtain that $\left\Vert \nabla
U_{k}\right\Vert _{L^{q}\left(  B_{R}\right)  }\leq c\left(
q,n,\alpha,R\right)  $ for any $1<q<n$, and thus $\left\Vert
U_{k}\right\Vert _{L^{p}\left(  B_{R}\right) }\leq\infty$, for any
$0<p<\infty$. By Corollary \ref{tent to 0}, we know $\exp\left\{
\alpha_{k}u_{k}^{\frac{n}{n-1}}\right\}  $ is bounded in
$L^{r}\,\left(  \Omega\backslash\left\{  B_{\delta}\right\}  \right)
$ for any $r>0$ and $\delta>0$. Then applying \cite[Theorem 2.8]{J}
and the result of \cite{T}, we have $\left\Vert U_{k}\right\Vert
_{C^{1,\alpha}\left( B_{R}\right)  }\leq c$, then
$c_{k}^{\frac{1}{n-1}}u_{k}\rightarrow G_{\alpha }$ weakly in
$W^{1,q}\left(  B_{R}\right)  $. So we are done.
\end{proof}

\bigskip Next, as \cite[Lemma 3.8]{liruf}, we can obtain the following
asymptotic representation of $G_{\alpha}$, which will be used to prove the existence of Trudinger-Moser functions.

\begin{lemma}
\label{tend to G}\bigskip$G_{\alpha}\in C_{loc}^{1,\beta}\left(
\mathbb{R}
^{n}\backslash\left\{  0\right\}  \right)  $ for some $\beta>0$, and near $0$,
we have
\begin{equation}
\bigskip G_{\alpha}=-\frac{n}{\alpha_{n}}\log r+A+O\left(  r^{n}\log
^{n}r\right)  . \label{13}%
\end{equation}
Moreover, for any $\delta>0$, we have
\begin{equation}
\underset{k\rightarrow\infty}{\lim}\left(  \int_{%
\mathbb{R}
^{n}\backslash B_{\delta}}\left\vert \nabla U_{k}\right\vert ^{n}dx+\left(
1-\alpha\right)  \int_{%
\mathbb{R}
^{n}\backslash B_{\delta}}U_{k}^{n}dx\right)  =\omega_{n-1}\left\vert
G_{\alpha}^{\prime}\left(  \delta\right)  \right\vert ^{n-1}\delta^{n-1}.
\label{add 9}%
\end{equation}

\end{lemma}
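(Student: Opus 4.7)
The plan is to establish the lemma's three components---the $C^{1,\beta}_{loc}$ regularity of $G_\alpha$ on $\mathbb{R}^n\setminus\{0\}$, the asymptotic expansion (\ref{13}) near the origin, and the global energy identity (\ref{add 9})---each via its own tool. The regularity is the easiest: since the Dirac mass in (\ref{121}) is supported at the origin, on $\mathbb{R}^n\setminus\{0\}$ the function $G_\alpha$ satisfies the quasilinear equation $-\Delta_n G_\alpha + (1-\alpha)G_\alpha^{n-1}=0$ weakly. Lemma \ref{tend to G 1} already provides $G_\alpha\in W^{1,q}_{loc}(\mathbb{R}^n)$ for every $1<q<n$, and $G_\alpha$ is locally bounded away from the origin by radial monotonicity. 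Applying the Tolksdorf theory for the $n$-Laplacian \cite{T} on each closed annulus $\{r_1\le|x|\le r_2\}$ with $r_1>0$ then yields $G_\alpha\in C^{1,\beta}_{loc}(\mathbb{R}^n\setminus\{0\})$ for some $\beta>0$.

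For the expansion (\ref{13}) I exploit the radial symmetry of $G_\alpha$, inherited from the $u_k$. In radial form the equation reads
\[
-\bigl(r^{n-1}|G_\alpha'|^{n-2}G_\alpha'\bigr)' + (1-\alpha)r^{n-1}G_\alpha^{n-1} = 0\qquad(r>0),
\]
with the flux boundary condition $\lim_{r\to 0^+}r^{n-1}|G_\alpha'|^{n-2}G_\alpha' = -\omega_{n-1}^{-1}$ forced by matching against the fundamental solution $-\frac{n}{\alpha_n}\log r$ of the $n$-Laplacian (note $(n/\alpha_n)^{n-1}=\omega_{n-1}^{-1}$). Integrating once gives
\[
r^{n-1}|G_\alpha'|^{n-2}G_\alpha' = -\omega_{n-1}^{-1} + (1-\alpha)\int_0^r s^{n-1}G_\alpha(s)^{n-1}\,ds.
\]
Substituting the provisional leading order $G_\alpha(s)\sim -\frac{n}{\alpha_n}\log s$ into the integral, together with the elementary estimate $\int_0^r s^{n-1}|\log s|^{n-1}\,ds = O(r^n|\log r|^{n-1})$, shows the nonlinear correction has size $O(r^n|\log r|^{n-1})$. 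Solving for $G_\alpha'$ gives $-G_\alpha'(r) = \frac{n}{\alpha_n r} + O(r^{n-1}|\log r|^{n-1})$, and integrating from $r$ to a fixed small $r_0$ yields $G_\alpha(r) = -\frac{n}{\alpha_n}\log r + A + O(r^n\log^n r)$, where $A$ is the constant of integration, determined by the global profile of $G_\alpha$.

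For the identity (\ref{add 9}) I test the PDE (\ref{add 8}) for $U_k$ against $U_k$ on $\mathbb{R}^n\setminus B_\delta$. Because $\alpha<1$, $G_\alpha$ (and hence $U_k$ for large $k$) decays exponentially at infinity via the $(1-\alpha)G_\alpha^{n-1}$ term, so the boundary at infinity contributes nothing. Integration by parts yields
\[
\int_{\mathbb{R}^n\setminus B_\delta}\!\bigl(|\nabla U_k|^n+(1-\gamma_k)U_k^n\bigr)\,dx = \int_{\mathbb{R}^n\setminus B_\delta}U_k\,\mu_k c_k\lambda_k^{-1}u_k^{\frac{1}{n-1}}\Phi'\bigl(\alpha_k u_k^{\frac{n}{n-1}}\bigr)\,dx + B_k(\delta),
\]
where $B_k(\delta)$ is the surface term on $\partial B_\delta$. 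Lemma \ref{tend to G 1} gives $U_k\to G_\alpha$ in $C^1$ on $\{\delta\le|x|\le R\}$ for every $R$, so $B_k(\delta)$ passes to its limit. The bulk term on the right vanishes: by Lemma \ref{dirac} the measures $\mu_k\lambda_k^{-1}c_k u_k^{1/(n-1)}\Phi'\,dx$ concentrate to $\delta_0$, while $U_k\mathbf{1}_{\mathbb{R}^n\setminus B_\delta}$ is uniformly bounded (by monotonicity $U_k\le U_k(\delta)\to G_\alpha(\delta)$) and vanishes in a neighborhood of $0$. Combined with $\gamma_k\to\alpha$, this produces (\ref{add 9}).

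The principal technical obstacle is the ODE bootstrap behind (\ref{13}): a naive single substitution only yields a weaker estimate, so one must track the algebraic--logarithmic error carefully to land on $O(r^n\log^n r)$, and make sure the constant $A$ is unambiguously determined by the global profile rather than by the choice of the auxiliary radius $r_0$. The regularity step is routine elliptic theory applied on annuli, and the energy identity follows mechanically from the $C^1$-convergence and the Dirac-concentration statement already assembled in the preceding lemmas.
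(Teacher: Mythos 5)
Your proof of the energy identity \eqref{add 9} follows exactly the paper's route: test \eqref{add 8} with $U_k$ on $\mathbb{R}^n\setminus B_\delta$, integrate by parts, pass the boundary term to its limit via the $C^1_{loc}$-convergence of Lemma~\ref{tend to G 1}, and send $\gamma_k\to\alpha$. The one weak link is how you dispose of the bulk nonlinear term: you invoke Lemma~\ref{dirac}, but that lemma is stated for \emph{fixed} $\varphi\in C_0^\infty(\mathbb{R}^n)$, whereas your weight $U_k\mathbf{1}_{\mathbb{R}^n\setminus B_\delta}$ depends on $k$ and has a jump, so the lemma does not literally apply. The cleaner route, which is what the paper does, is elementary: since $U_k u_k^{1/(n-1)}=c_k^{1/(n-1)}u_k^{n/(n-1)}$, the bulk term equals $\mu_k\,(c_k^{n/(n-1)}/\lambda_k)\int_{\mathbb{R}^n\setminus B_\delta}u_k^{n/(n-1)}\Phi'(\alpha_k u_k^{n/(n-1)})\,dx$; the prefactor is bounded by \eqref{7.1}, and the integral is $\le c\int_{\mathbb{R}^n\setminus B_\delta}u_k^n\,dx\to 0$ by Corollary~\ref{tent to 0} and the Taylor bound $\Phi'(t)=O(t^{n-2})$ for small $t$ (this is exactly the paper's \eqref{14}). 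Your ``concentration'' heuristic can be patched but is not a proof as written. You also do not need any decay-at-infinity argument: $U_k$ is identically zero outside $B_{R_k}$.

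For the expansion \eqref{13} the paper only cites \cite[Lemma~3.8]{liruf}, so your radial-ODE derivation is a genuine, self-contained supplement, and it is correct. Two small remarks: first, the estimate you obtain is in fact the slightly sharper $O(r^n|\log r|^{n-1})$, which of course implies the stated $O(r^n\log^n r)$; second, the ``substitution'' step as written is mildly circular — you should first deduce the crude upper bound $|G_\alpha'(r)|\le\tfrac{n}{\alpha_n r}$ from
\[
r^{n-1}|G_\alpha'(r)|^{n-1}=\omega_{n-1}^{-1}-(1-\alpha)\int_0^r s^{n-1}G_\alpha(s)^{n-1}\,ds\le\omega_{n-1}^{-1},
\]
hence $G_\alpha(r)\le -\tfrac{n}{\alpha_n}\log r + C$, and only then feed this bound into the integral to obtain the $O(r^n|\log r|^{n-1})$ correction. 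That closes the bootstrap you allude to. The regularity step via Tolksdorf on compact annuli is standard and matches the paper's appeal to \cite{T}.
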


\begin{proof}
The proof of (\ref{13}) is similar as \cite[Lemma 3.8]{liruf}, here we only
give the proof for (\ref{add 9}).

By Corollary \ref{tent to 0}, we have%
\begin{equation}
\int_{%
\mathbb{R}
^{n}\backslash B_{\delta}}u_{k}^{\frac{n}{n-1}}\Phi^{\prime}\left\{
\alpha_{k}u_{k}^{\frac{n}{n-1}}\right\}  dx\leq c\int_{%
\mathbb{R}
^{n}\backslash B_{\delta}}u_{k}^{n}dx\rightarrow0.\label{14}%
\end{equation}
Testing (\ref{add 8}) with $U_{k}$, we get%
\begin{align*}
&  \int_{%
\mathbb{R}
^{n}\backslash B_{\delta}}\left\vert \nabla U_{k}\right\vert ^{n}%
dx+\int_{\partial B_{\delta}}\left\vert \nabla U_{k}\right\vert ^{n-2}%
U_{k}\frac{\partial U_{k}}{\partial n}dx=-\int_{%
\mathbb{R}
^{n}\backslash B_{\delta}}div\left(  \left\vert \nabla U_{k}\right\vert
^{n-2}\nabla U_{k}\right)  U_{k}dx\\
&  =\int_{%
\mathbb{R}
^{n}\backslash B_{\delta}}\mu_{k}c_{k}^{\frac{n}{n-1}}\lambda_{k}^{-1}%
u_{k}^{\frac{n}{n-1}}\Phi^{\prime}\left\{  \alpha_{k}u_{k}^{\frac{n}{n-1}%
}\right\}  dx+\int_{%
\mathbb{R}
^{n}\backslash B_{\delta}}\left(  \gamma_{k}-1\right)  U_{k}^{n-1}U_{k}dx.
\end{align*}
By (\ref{14}), (\ref{7.1}), we have%
\begin{align*}
\underset{k\rightarrow\infty}{\lim}\int_{%
\mathbb{R}
^{n}\backslash B_{\delta}}\left\vert \nabla U_{k}\right\vert ^{n}dx &
=-\underset{k\rightarrow\infty}{\lim}\int_{\partial B_{\delta}}\left\vert
\nabla U_{k}\right\vert ^{n-2}U_{k}\frac{\partial U_{k}}{\partial n}dx+\left(
\alpha-1\right)  \underset{k\rightarrow\infty}{\lim}\int_{%
\mathbb{R}
^{n}\backslash B_{\delta}}U_{k}^{n-1}U_{k}dx\\
&  =-G_{\alpha}\left(  \delta\right)  \int_{\partial B_{\delta}}\left\vert
\nabla G_{\alpha}\right\vert ^{n-2}\frac{\partial G_{\alpha}}{\partial
n}dx+\left(  \alpha-1\right)  \underset{k\rightarrow\infty}{\lim}\int_{%
\mathbb{R}
^{n}\backslash B_{\delta}}U_{k}^{n-1}U_{k}dx\\
&  =\omega_{n-1}\left\vert G_{\alpha}^{\prime}\left(  \delta\right)
\right\vert ^{n-1}\delta^{n-1}+\left(  \alpha-1\right)  \underset
{k\rightarrow\infty}{\lim}\int_{%
\mathbb{R}
^{n}\backslash B_{\delta}}U_{k}^{n-1}U_{k}dx
\end{align*}
Thus
\[
\underset{k\rightarrow\infty}{\lim}\left(  \int_{%
\mathbb{R}
^{n}\backslash B_{\delta}}\left\vert \nabla U_{k}\right\vert ^{n}dx+\left(
1-\alpha\right)  \int_{%
\mathbb{R}
^{n}\backslash B_{\delta}}U_{k}^{n}dx\right)  =\omega_{n-1}\left\vert
G_{\alpha}^{\prime}\left(  \delta\right)  \right\vert ^{n-1}\delta^{n-1}.
\]
\ \ The proof is finished.
\end{proof}

\bigskip

\begin{proof}
[Proof for the first part of Theorem \ref{moser-trudinger}]By \bigskip
(\ref{2}), we can choose some $L>0$ such that $u_{k}\left(  L\right)  <1$, and
then
\[
\int_{%
\mathbb{R}
^{n}\backslash B_{L}}\exp\left\{  \beta_{k}\left\vert u_{k}\right\vert
^{\frac{n}{n-1}}\left(  1+\alpha\left\Vert u_{k}\right\Vert _{{n}}^{{n}%
}\right)  ^{\frac{1}{n-1}}\right\}  dx\leq c\int_{%
\mathbb{R}
^{n}\backslash B_{L}}\left\vert u_{k}\right\vert ^{n}dx\leq c.
\]
Now, we consider the case on $B_{L}$. \ Since $\left(  u_{k}-u_{k}\left(
L\right)  \right)  ^{+}\in W_{0}^{1,n}\left(  B_{L}\right)  $ and for some
$c>0$,
\begin{align*}
u_{k}^{\frac{n}{n-1}} &  =\left(  \left(  u_{k}-u_{k}\left(  L\right)
\right)  ^{+}+u_{k}\left(  L\right)  \right)  ^{\frac{n}{n-1}}\\
&  \leq\left(  \left(  u_{k}-u_{k}\left(  L\right)  \right)  ^{+}\right)
^{\frac{n}{n-1}}+c\left(  \left(  u_{k}-u_{k}\left(  L\right)  \right)
^{+}\right)  ^{\frac{1}{n-1}}u_{k}\left(  L\right)  +u_{k}\left(  L\right)
^{\frac{n}{n-1}},
\end{align*}
by Lemma \ref{tend to G 1}, we know $c_{k}^{\frac{1}{n-1}}u_{k}\rightarrow
G_{\alpha}$, then $u_{k}\left(  L\right)  =\frac{G_{\alpha}\left(  L\right)
}{c_{k}^{\frac{1}{n-1}}}$. Therefore, we have
\begin{align*}
u_{k}^{\frac{n}{n-1}} &  \leq\left(  \left(  u_{k}-u_{k}\left(  L\right)
\right)  ^{+}\right)  ^{\frac{n}{n-1}}+c\left(  \frac{\left(  u_{k}%
-u_{k}\left(  L\right)  \right)  ^{+}}{c_{k}}\right)  ^{\frac{1}{n-1}}%
+u_{k}\left(  L\right)  ^{\frac{n}{n-1}}\\
&  \leq\left(  \left(  u_{k}-u_{k}\left(  L\right)  \right)  ^{+}\right)
^{\frac{n}{n-1}}+c.
\end{align*}
Thus \
\begin{align*}
&  \int_{B_{L}}\exp\left\{  \beta_{k}\left\vert u_{k}\right\vert ^{\frac
{n}{n-1}}\left(  1+\alpha\left\Vert u_{k}\right\Vert _{{n}}^{{n}}\right)
^{\frac{1}{n-1}}\right\}  dx\\
&  \leq c\int_{B_{L}}\exp\left\{  \beta_{k}\left(  \left(  u_{k}-u_{k}\left(
L\right)  \right)  ^{+}\right)  ^{\frac{n}{n-1}}\left(  1+\alpha\left\Vert
u_{k}\right\Vert _{{n}}^{{n}}\right)  ^{\frac{1}{n-1}}\right\}  dx\\
&  \leq c\int_{B_{L}}\exp\left\{  \beta_{k}\left(  \left(  u_{k}-u_{k}\left(
L\right)  \right)  ^{+}\right)  ^{\frac{n}{n-1}}\left(  \left(  1+\alpha
\left\Vert u_{k}\right\Vert _{{n}}^{{n}}\right)  ^{\frac{1}{n-1}}-1\right)
\right\}  \exp\left(  \beta_{k}\left(  \left(  u_{k}-u_{k}\left(  L\right)
\right)  ^{+}\right)  ^{\frac{n}{n-1}}\right)  dx\\
&  \leq c\exp\left\{  \beta_{k}c_{k}^{\frac{n}{n-1}}\left(  \left(
1+\alpha\left\Vert u_{k}\right\Vert _{{n}}^{{n}}\right)  ^{\frac{1}{n-1}%
}-1\right)  \right\}  \int_{B_{L}}\exp\left(  \beta_{k}\left(  \left(
u_{k}-u_{k}\left(  L\right)  \right)  ^{+}\right)  ^{\frac{n}{n-1}}\right)
dx.
\end{align*}
From Lemma \ref{tend to G 1} and Lemma \ref{tend to G}, we know $\left\Vert
c_{k}^{\frac{1}{n-1}}u_{k}\right\Vert _{{n}}$ is bounded. Recalling the fact
that $\left\Vert u_{k}\right\Vert _{{n}}^{{n}}\rightarrow0$, and applying the
classic Trudinger-Moser inequality, we have
\begin{align*}
&  \int_{B_{L}}\exp\left\{  \beta_{k}\left\vert u_{k}\right\vert ^{\frac
{n}{n-1}}\left(  1+\alpha\left\Vert u_{k}\right\Vert _{{n}}^{{n}}\right)
^{\frac{1}{n-1}}\right\}  dx\\
&  \leq c\exp\left\{  \frac{\alpha\beta_{k}c_{k}^{\frac{n}{n-1}}}%
{n-1}\left\Vert u_{k}\right\Vert _{{n}}^{{n}}\right\}  \int_{B_{L}}\exp\left(
\beta_{k}\left(  \left(  u_{k}-u_{k}\left(  L\right)  \right)  ^{+}\right)
^{\frac{n}{n-1}}\right)  dx\\
&  =c\exp\left\{  \frac{\beta_{k}\alpha}{n-1}\left\Vert c_{k}^{\frac{1}{n-1}%
}u_{k}\right\Vert _{{n}}^{{n}}\right\}  \int_{B_{L}}\exp\left(  \beta
_{k}\left(  \left(  u_{k}-u_{k}\left(  L\right)  \right)  ^{+}\right)
^{\frac{n}{n-1}}\right)  dx\\
&  \leq c.
\end{align*}

\end{proof}

\section{\bigskip Existence of the extremal function}

In this section, we will show that the existence of the extremal functions of
the Trudinger-Moser ineuqality involving $L^{n}$ norm in $%
\mathbb{R}
^{n}$. For this, we first establish the upper bound for critical
functional when $c_{k}\rightarrow\infty$, and then construct an
explicit test function, which provides a lower bound for the
supremum of our Trudinger-Moser inequality, meanwhile, this lower
bound equals to the upper bound.

\medskip

In order to prove the existence of the extremal functions, we need the
following famous result due to L. Carleson and S.Y.A. Chang \cite{c-c}, which
often plays a key role in proof of existence result (see \cite{liruf},
\cite{yang}, \cite{lu-yang 1} and \cite{zhu}).

\begin{theorem}
[Carleson and Chang]\bigskip\label{C-C} Let $B$ be a unit ball in $%
\mathbb{R}
^{n}$. Given a function sequence $\left\{  u_{k}\right\}  \subset W_{0}%
^{1,n}\left(  B\right)  $ with $\left\Vert \nabla u_{k}\right\Vert _{n}=1$. If
$u_{k}\rightarrow0$ weakly in $W_{0}^{1,n}\left(  B\right)  $, then
\[
\underset{k\rightarrow\infty}{\lim\sup}\int_{B}e^{\alpha_{n}\left\vert
u_{k}\right\vert ^{\frac{n}{n-1}}dx}\leq B\left(  1+e^{1+\frac{1}{2}%
+\ldots+\frac{1}{n-1}}\right)  .
\]

\end{theorem}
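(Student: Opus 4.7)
The plan is to proceed via three steps: a symmetrization step that reduces the problem to the radial case, an exponential change of variable that converts the $n$-dimensional integral into a one-dimensional one, and a splitting argument that extracts the sharp constant $e^{1+1/2+\cdots+1/(n-1)}$.

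First, I would use Schwarz symmetrization to replace $u_k$ by its radial decreasing rearrangement $u_k^*$; this does not increase $\|\nabla u_k\|_n$, preserves the weak convergence to $0$, and preserves the exponential integral. Hence one may assume each $u_k$ is radial, nonincreasing, and vanishes at $r=1$. I would then make the substitution $r = e^{-t/n}$ and set $v_k(t) = \alpha_n^{(n-1)/n}\, u_k(e^{-t/n})$. A direct computation using $\alpha_n = n\omega_{n-1}^{1/(n-1)}$ shows $v_k(0)=0$,
\[
\int_0^\infty |v_k'(t)|^n\, dt \;=\; \|\nabla u_k\|_n^n \;\leq\; 1,
\]
and
\[
\int_B e^{\alpha_n |u_k|^{n/(n-1)}}\, dx \;=\; |B|\int_0^\infty e^{\,|v_k(t)|^{n/(n-1)} - t}\, dt.
\]
Since $u_k \rightharpoonup 0$ in $W_0^{1,n}(B)$, one also has $v_k(t)\to 0$ for each fixed $t$. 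The theorem therefore reduces to the one-dimensional statement: for any such sequence $\{v_k\}$,
\[
\limsup_{k\to\infty}\int_0^\infty e^{\,|v_k|^{n/(n-1)} - t}\, dt \;\leq\; 1 + e^{\,1+\frac{1}{2}+\cdots+\frac{1}{n-1}}.
\]

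For the one-dimensional inequality I would fix $\varepsilon>0$ and split at a level $T_k$ chosen so that $\int_0^{T_k} |v_k'|^n\, dt = 1-\varepsilon$, separating the ``concentrating'' inner region from the outer tail. On $[T_k,\infty)$, Hölder's inequality applied to $v_k(t)-v_k(T_k) = \int_{T_k}^t v_k'$ together with the smallness $v_k(T_k)\to 0$ yields $|v_k(t)|^{n/(n-1)} \leq t + o(1)$, so the exponent is nonpositive up to $o(1)$ and the tail contributes at most $1 + o_\varepsilon(1)$. On the inner interval $[0,T_k]$, the worst case is, up to a small error, attained by the Hölder-saturating profile $\tilde v(t) = (1-\varepsilon)^{1/n}\, t^{(n-1)/n}$; substituting this profile, expanding $(1-\varepsilon)^{1/(n-1)}$ inside the exponent, and computing the resulting integral produces exactly the harmonic number $H_{n-1} = 1+1/2+\cdots+1/(n-1)$ as the coefficient of $e^{\,\cdot}$ after passing $\varepsilon \to 0$.

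The main obstacle is justifying the replacement of $v_k$ on $[0,T_k]$ by the extremal trial function $\tilde v$. The obstacle is that $v \mapsto v^{n/(n-1)}$ is nonlinear, so one cannot just apply a standard rearrangement inequality to $v_k'$. The trick (and the substance of Carleson–Chang) is to rearrange $|v_k'|^n$ on $[0,T_k]$ in decreasing order, noting that concentrating the mass of $|v_k'|^n$ near $t=0$ maximizes $v_k(t)$ at every $t \in [0,T_k]$, hence maximizes the exponential integrand pointwise; combined with a careful interpolation between the Hölder bound and the constraint $\|v_k'\|_n^n \leq 1-\varepsilon$, this yields the sharp combinatorial constant. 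Combining the inner bound $|B|\cdot e^{H_{n-1}} + o(1)$ with the outer bound $|B|\cdot(1 + o_\varepsilon(1))$ and sending $\varepsilon\to 0$ delivers the conclusion.
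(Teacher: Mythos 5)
This theorem is cited in the paper from Carleson--Chang \cite{c-c} without proof, so there is no in-paper argument to compare against. Your reduction scaffolding is the standard first step and is correct: Schwarz symmetrization preserves the integral (equal distribution), does not increase $\|\nabla u_k\|_n$, and preserves the weak limit $0$ (by compact embedding into $L^p$, $p<\infty$); the substitution $r=e^{-t/n}$, $v(t)=\alpha_n^{(n-1)/n}u(e^{-t/n})$ then reduces matters to the one-dimensional claim $\limsup_k\int_0^\infty e^{v_k^{n/(n-1)}-t}\,dt\le 1+e^{1+\frac{1}{2}+\cdots+\frac{1}{n-1}}$ with $v_k(0)=0$, $\int_0^\infty|v_k'|^n\le 1$, $v_k\to 0$ pointwise. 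But that lemma is the whole content of Carleson--Chang, and your argument for it has genuine gaps. The claimed smallness $v_k(T_k)\to 0$ is simply false: take the Moser profile $v_k(t)=tT^{-1/n}$ on $[0,T]$, $v_k\equiv T^{(n-1)/n}$ on $[T,\infty)$ with $T:=n\log k$, which satisfies all the hypotheses. Then $|v_k'|^n\equiv T^{-1}$ on $[0,T]$, so $T_k=(1-\varepsilon)T$ and $v_k(T_k)=(1-\varepsilon)T^{(n-1)/n}\to+\infty$; pointwise convergence only controls $v_k$ on compacts, while $T_k\to\infty$. Worse, your split misallocates the main term: for this profile the exponent $v_k^{n/(n-1)}(t)-t$ attains its maximum $0$ at $t=T$, which lies inside the \emph{outer} region $[T_k,\infty)$, so the outer integral carries the concentration peak (it tends to $n-1$, plus $1$ from the flat tail), while the inner integral is just the trivial $\approx 1$ near $t=0$ — the reverse of what your scheme requires.

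The trial function $\tilde v(t)=(1-\varepsilon)^{1/n}t^{(n-1)/n}$ is also not admissible: it is the pointwise envelope of all $v$ with $\int_0^{T_k}|v'|^n\le 1-\varepsilon$ and $v(0)=0$, but
\[
\int_0^{T_k}|\tilde v'|^n\,dt=(1-\varepsilon)\left(\tfrac{n-1}{n}\right)^n\int_0^{T_k}t^{-1}\,dt
\]
diverges, so no single admissible function realizes that envelope. Decreasing rearrangement of $|v_k'|^n$ does raise $v_k(t)$ pointwise, but it does not produce $\tilde v$ (for the Moser profile, $|v_k'|^n$ is already constant and rearrangement changes nothing). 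And even if you substitute $\tilde v$ formally,
\[
\int_0^{T_k}e^{\tilde v^{n/(n-1)}-t}\,dt=\int_0^{T_k}e^{-\left(1-(1-\varepsilon)^{1/(n-1)}\right)t}\,dt\le\frac{1}{1-(1-\varepsilon)^{1/(n-1)}}\sim\frac{n-1}{\varepsilon},
\]
which blows up as $\varepsilon\to 0$ rather than producing $e^{1+\frac{1}{2}+\cdots+\frac{1}{n-1}}$. The harmonic-sum constant in Carleson--Chang emerges from a more delicate optimization over a one-parameter family of truncated-and-shifted Moser-type profiles, and your outline does not reconstruct that step; the phrase ``careful interpolation'' is where the actual theorem lives, and it cannot be black-boxed.
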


\begin{proposition}
\label{attain lemma} If $S$ can not be attained, then
\[
S\leq\frac{\omega_{n-1}}{n}\exp\left\{  \alpha_{n}A+1+\frac{1}%
{2}+\ldots+\frac{1}{n-1}\right\},
\]
where $A$ is the constant in (\ref{13}).
\end{proposition}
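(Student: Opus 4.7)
I will work under the blow-up assumption $c_k\to\infty$, since Lemma \ref{attain lemma2} handles the complementary case. The strategy is the Carleson--Chang capacity argument combined with the Green-function asymptotic, following the pattern of Flucher \cite{Flucher}, Lin \cite{lin}, Li \cite{Li 1} and most directly Li--Ruf \cite{liruf}. The starting point is the reduction furnished by Lemma \ref{lemma 8}: using also $\mu_k\to 1$ from Remark \ref{remark},
\[ S \;\le\; \limsup_{k}\frac{\lambda_k}{c_k^{n/(n-1)}} \;=\; \limsup_{k}\,r_k^{n}\,e^{\alpha_k c_k^{n/(n-1)}}, \]
so the task reduces to bounding $r_k^{n}e^{\alpha_k c_k^{n/(n-1)}}$ from above by $\tfrac{\omega_{n-1}}{n}e^{\alpha_n A+H_{n-1}}$, where $H_{n-1}=1+\tfrac12+\cdots+\tfrac{1}{n-1}$.

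Next, for a small fixed $\delta>0$ I will set $s_k := u_k(\delta)$, $\tau_k^{n} := \int_{B_\delta}|\nabla u_k|^{n}\,dx$, and introduce the truncation $w_k := (u_k-s_k)^{+}/\tau_k\in W_{0}^{1,n}(B_\delta)$, which has unit Dirichlet norm and, by Corollary \ref{tent to 0} combined with the blow-up, tends weakly to $0$. Applying Theorem \ref{C-C} after the natural rescaling of $B_\delta$ to $B_1$, and using that $w_k\to 0$ uniformly on compact subsets of $B_\delta\setminus\{0\}$ so that $\int_{B_\delta\setminus B_{Rr_k}}e^{\alpha_n w_k^{n/(n-1)}}\,dx\to|B_\delta|$ as $k\to\infty$ for each fixed $R$, isolates the \emph{peak} contribution
\[ \limsup_{k}\int_{B_{Rr_k}} e^{\alpha_n w_k^{n/(n-1)}}\,dx \;\le\; \frac{\omega_{n-1}}{n}\,\delta^{n}\,e^{H_{n-1}}. \]
After rescaling $x=r_k y$ and invoking the expansion (\ref{5}) of $\alpha_k(m_k^{n/(n-1)}-c_k^{n/(n-1)})$ together with a Taylor expansion of $(m_k(y)-s_k)^{n/(n-1)}$ around $c_k-s_k$ and the normalization $\int e^{\psi}\,dy=1$ (from (\ref{add3}) and $\|u_k\|_n\to 0$), the peak integral is identified as $\limsup_{k} r_k^{n}e^{P_k}$ with $P_k:=\alpha_n(c_k-s_k)^{n/(n-1)}/\tau_k^{n/(n-1)}$.

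The technical core, and the main obstacle, is the comparison between $\alpha_k c_k^{n/(n-1)}$ and $P_k$. I will expand both quantities using (i) $c_k^{1/(n-1)}s_k\to G_\alpha(\delta)$ from Lemma \ref{tend to G 1}; (ii) the rescaling identity $c_k^{n/(n-1)}(1-\tau_k^{n}) = \|U_k\|_n^n+\int_{\mathbb{R}^n\setminus B_\delta}|\nabla U_k|^n$ with $U_k:=c_k^{1/(n-1)}u_k\to G_\alpha$; (iii) the integral identity of Lemma \ref{tend to G}; (iv) the local expansion (\ref{13}), $G_\alpha(\delta) = -\tfrac{n}{\alpha_n}\log\delta + A + O(\delta^{n}\log^{n}\delta)$; and (v) the limit $\omega_{n-1}|G_\alpha'(\delta)|^{n-1}\delta^{n-1}\to 1$ as $\delta\to 0$. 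These will combine to give $e^{\alpha_k c_k^{n/(n-1)}-P_k} = \delta^{-n}e^{\alpha_n A}(1+o_{\delta}(1))$; multiplying by the peak estimate and sending $\delta\to 0$ then produces the claimed bound. The delicate point is the precise cancellation between the divergent $\log(1/\delta)$ contribution of $G_\alpha(\delta)$ and the $\delta^{n}$ factor from Carleson--Chang: all lower-order terms in the expansions of $(1-s_k/c_k)^{n/(n-1)}$ and $\tau_k^{n/(n-1)}$ must be tracked to the next order, and the energy identity of Lemma \ref{tend to G} applied to the blown-up sequence $U_k$ is what lets the finite constant $\alpha_n A$ emerge cleanly.
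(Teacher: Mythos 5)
Your proposal is correct and follows essentially the same route as the paper's proof. Both arguments truncate $u_k$ at $u_k(\delta)$ and renormalize by the local Dirichlet energy to produce a sequence in $W_0^{1,n}(B_\delta)$ with unit gradient norm; both apply Theorem \ref{C-C}; both invoke Lemma \ref{tend to G 1}, Lemma \ref{tend to G}, and the expansion \eqref{13} together with $\|u_k\|_n\to 0$ and $\omega_{n-1}|G'_\alpha(\delta)|^{n-1}\delta^{n-1}\to 1$; and both close via Lemma \ref{lemma 8}. The only cosmetic difference is one of bookkeeping: the paper derives a pointwise bound of the form $\alpha_k u_k^{\frac{n}{n-1}}\le \alpha_n \bar u_k^{\frac{n}{n-1}}/\tau_k^{1/(n-1)}+\alpha_n A-\log\delta^n+o(1)$ on $B_{Lr_k}$ and then exponentiates and integrates, whereas you track the two scalar exponents $\alpha_k c_k^{n/(n-1)}$ and $P_k=\alpha_n(c_k-s_k)^{n/(n-1)}/\tau_k^{n/(n-1)}$ and factor the profile integral $\int e^{\psi}=1$ out separately; evaluated at the concentration point these are the same expansion, and your identity $\alpha_k c_k^{n/(n-1)}-P_k=-n\log\delta+\alpha_n A+o_k(1)+o_\delta(1)$ is exactly the content of the paper's pointwise inequality. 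One small remark: in isolating the peak you do not actually need the (slightly delicate) claim that $\int_{B_\delta\setminus B_{Rr_k}}e^{\alpha_n w_k^{n/(n-1)}}\to|B_\delta|$; the crude lower bound $\liminf_k\int_{B_\delta\setminus B_{Rr_k}}e^{\alpha_n w_k^{n/(n-1)}}\ge|B_\delta|$ already suffices to subtract off the volume term from the Carleson--Chang bound, which is all the argument requires.
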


\begin{proof}
\ By the Lemma \ref{tend to G}, we get%
\begin{align*}
&  \bigskip\int_{%
\mathbb{R}
^{n}\backslash B_{\delta}}\left(  \left\vert \nabla u_{k}\right\vert
^{n}+\left\vert u_{k}\right\vert ^{n}\right)  dx\\
&  =c_{k}^{\frac{-n}{n-1}}\left(  \alpha\int_{%
\mathbb{R}
^{n}\backslash B_{\delta}}U_{k}^{{n}}dx+G_{\alpha}\left(  \delta\right)
\omega_{n-1}\left\vert G^{\prime}\left(  \delta\right)  \right\vert
^{n-1}\delta^{n-1}+o_{k}\left(  1\right)  \right)  \\
&  =c_{k}^{\frac{-n}{n-1}}\left(  \alpha\underset{k\rightarrow\infty}{\lim
}\left\Vert U_{k}\right\Vert _{{n}}^{{n}}-\frac{n}{\alpha_{n}}\log
\delta+A+o_{k}\left(  1\right)  +O_{\delta}\left(  1\right)  \right)  .
\end{align*}
\bigskip Setting $\bar{u}_{k}\left(  x\right)  =\left(  u_{k}\left(  x\right)
-u_{k}\left(  \delta\right)  \right)  ^{+}$. Then we have%
\begin{align}
\bigskip\bigskip\int_{B_{\delta}}\left\vert \nabla\bar{u}_{k}\right\vert
^{n}dx &  \leq\int_{B_{\delta}}\left\vert \nabla u_{k}\right\vert ^{n}%
dx=\tau_{k}:=1-\int_{%
\mathbb{R}
^{n}\backslash B_{\delta}}\left(  \left\vert \nabla u_{k}\right\vert
^{n}+\left\vert u_{k}\right\vert ^{n}\right)  dx-\int_{B_{\delta}}\left\vert
u_{k}\right\vert ^{n}dx\nonumber\\
&  =1-c_{k}^{\frac{-n}{n-1}}\left(  \alpha\underset{k\rightarrow\infty}{\lim
}\left\Vert U_{k}\right\Vert _{{n}}^{{n}}-\frac{n}{\alpha_{n}}\log
\delta+A+o_{k}\left(  1\right)  +O_{\delta}\left(  1\right)  \right)
.\label{18}%
\end{align}
When $x\in B_{Lr_{k}}$, by (\ref{18}) and Lemma \ref{tend to G 1}, we have%
\begin{align*}
&  \alpha_{k}u_{k}^{\frac{n}{n-1}}\leq\alpha_{n}\left(  1+\alpha\left\Vert
u_{k}\right\Vert _{{n}}^{{n}}\right)  ^{\frac{1}{n-1}}\left(  \bar{u}%
_{k}+u_{k}\left(  \delta\right)  \right)  ^{\frac{n}{n-1}}\\
&  \leq\alpha_{n}\left\vert \bar{u}_{k}\right\vert ^{\frac{n}{n-1}}%
+\frac{n\alpha_{n}}{n-1}\left\vert \bar{u}_{k}\right\vert ^{\frac{1}{n-1}%
}\left\vert u_{k}\left(  \delta\right)  \right\vert +\frac{\alpha_{n}\alpha
}{n-1}\left\Vert c_{k}^{\frac{1}{n-1}}u_{k}\right\Vert _{{n}}^{{n}}%
+o_{k}\left(  1\right)  \\
&  \leq\alpha_{n}\left\vert \bar{u}_{k}\right\vert ^{\frac{n}{n-1}}%
+\frac{n\alpha_{n}}{n-1}\left\vert c_{k}\right\vert ^{\frac{1}{n-1}}\left\vert
u_{k}\left(  \delta\right)  \right\vert +\frac{\alpha_{n}\alpha}{n-1}%
\underset{k\rightarrow\infty}{\lim}\left\Vert U_{k}\right\Vert _{{n}}^{{n}%
}+o_{k}\left(  1\right)  \\
&  \leq\alpha_{n}\left\vert \bar{u}_{k}\right\vert ^{\frac{n}{n-1}}%
+\frac{n\alpha_{n}}{n-1}\left\vert G_{\alpha}\left(  \delta\right)
\right\vert +\frac{\alpha_{n}\alpha}{n-1}\underset{k\rightarrow\infty}{\lim
}\left\Vert U_{k}\right\Vert _{{n}}^{{n}}+o_{k}\left(  1\right)  \\
&  =\alpha_{n}\left\vert \bar{u}_{k}\right\vert ^{\frac{n}{n-1}}-\frac{n^{2}%
}{n-1}\log\delta+\frac{n\alpha_{n}}{n-1}A+\frac{\alpha_{n}\alpha}%
{n-1}\underset{k\rightarrow\infty}{\lim}\left\Vert U_{k}\right\Vert _{{n}%
}^{{n}}+o_{k}\left(  1\right)  +o_{\delta}\left(  1\right)  \\
&  \leq\frac{\alpha_{n}\left\vert \bar{u}_{k}\right\vert ^{\frac{n}{n-1}}%
}{\tau_{k}^{\frac{1}{n-1}}}+\alpha_{n}A-\log\delta^{n}+o_{k}\left(  1\right)
+o_{\delta}\left(  1\right)
\end{align*}
Integrating the above estimates on $B_{Lr_{k}}$, we have
\begin{align*}
\int_{B_{Lr_{k}}}\left(  \exp\left\{  \alpha_{k}u_{k}^{\frac{n}{n-1}}\right\}
-1\right)  dx &  \leq\delta^{-n}\exp\left\{  \alpha_{n}A+o_{k}\left(
1\right)  \right\}  \cdot\\
&  \cdot\int_{B_{Lr_{k}}}\left(  \exp\left\{  \alpha_{k}u_{k}^{\frac{n}{n-1}%
}/\tau_{k}^{\frac{1}{n-1}}\right\}  -1\right)  dx+o_{k}\left(  1\right)  .
\end{align*}
By the Lemma \ref{C-C}, we have%
\[
\int_{B_{Lr_{k}}}\left(  \exp\left\{  \alpha_{k}u_{k}^{\frac{n}{n-1}}\right\}
-1\right)  dx\leq\frac{\omega_{n-1}}{n}\exp\left\{  \alpha_{n}A+1+\frac{1}%
{2}+\ldots+\frac{1}{n-1}\right\}  ,
\]
thanks to Lemma \ref{lemma 8}, we get
\begin{align}
\underset{k\rightarrow\infty}{\lim}\int_{%
\mathbb{R}
^{n}}\Phi\left(  \alpha_{k}u_{k}^{\frac{n}{n-1}}\right)  dx &  \leq
\underset{L\rightarrow\infty}{\lim}\underset{k\rightarrow\infty}{\lim}%
\int_{B_{Lr_{k}}}\left(  \exp\left(  \alpha_{k}u_{k}^{\frac{n}{n-1}}\right)
-1\right)  dx\nonumber\\
&  \leq\frac{\omega_{n-1}}{n}\exp\left\{  \alpha_{n}A+1+\frac{1}{2}%
+\ldots+\frac{1}{n-1}\right\}  .\label{23}%
\end{align}
Combining (\ref{23}) and Lemma \ref{attain lemma2}, the proposition is proved.
\end{proof}

In this subsection, we will construct a function sequence $\left\{
u_{\varepsilon}\right\}  \subset W^{1,n}\left(
\mathbb{R}
^{n}\right)  $ with $\left\Vert u_{\varepsilon}\right\Vert _{W^{1,n}}=1$ such that%

\[
\int_{%
\mathbb{R}
^{n}}\Phi\left(  \alpha_{n}u_{\varepsilon}^{\frac{n}{n-1}}\right)
dx>\frac{\omega _{n-1}}{n}\exp\left\{
\alpha_{n}A+1+\frac{1}{2}+\ldots+\frac{1}{n-1}\right\} .
\]
\begin{proof}
[Proof of Theorem \ref{attain}] Let
\[
u_{\varepsilon}=\left\{
\begin{array}
[c]{c}%
\frac{C-C^{\frac{-1}{n-1}}\left(  \frac{n-1}{\alpha_{n}}\log\left(
1+c_{n}\left\vert \frac{x}{\varepsilon}\right\vert ^{\frac{n}{n-1}}\right)
-B_{\varepsilon}\right)  }{\left(  1+\alpha C^{\frac{-n}{n-1}}\left\Vert
G_{\alpha}\right\Vert _{{n}}^{{n}}\right)  ^{\frac{1}{n}}}\text{
\ \ \ \ \ \ \ \ \ \ \ \ \ \ \ \ \ \ }\left\vert x\right\vert \leq
R\varepsilon,\\
\frac{G_{\alpha}\left(  \left\vert x\right\vert \right)  }{\left(  C^{\frac
{n}{n-1}}+\alpha\left\Vert G_{\alpha}\right\Vert _{{n}}^{{n}}\right)
^{\frac{1}{n}}}\text{
\ \ \ \ \ \ \ \ \ \ \ \ \ \ \ \ \ \ \ \ \ \ \ \ \ \ \ \ \ }R\varepsilon
<\left\vert x\right\vert ,
\end{array}
\right.
\]
where $c_{n}=\left(  \frac{\omega_{n-1}}{n}\right)  ^{\frac{1}{n-1}}$,
$B_{\varepsilon}$, $R$ and $c$ depending on $\varepsilon$ will also be
determined later, such that

\medskip

\bigskip i) \ $R\varepsilon\rightarrow0$, $R\rightarrow\infty$ and
$C\rightarrow\infty$, as $\varepsilon\rightarrow0;$

\medskip

ii) $\ \frac{C-\frac{n-1}{\alpha_{n}}C^{\frac{-1}{n-1}}\log\left(
1+c_{n}\left\vert R\right\vert ^{\frac{n}{n-1}}\right)  +B_{\varepsilon}%
}{\left(  1+\alpha C^{\frac{-n}{n-1}}\left\Vert G_{\alpha}\right\Vert _{{n}%
}^{{n}}\right)  ^{\frac{1}{n}}}=\frac{G_{\alpha}\left(  R\varepsilon\right)
}{\left(  C^{\frac{n}{n-1}}+\alpha\left\Vert G_{\alpha}\right\Vert _{{n}}%
^{{n}}\right)  ^{\frac{1}{n}}};$

\medskip

We can obtain the information of $B_{\varepsilon}$, $C$ and $R$ by
normalizating $u_{\varepsilon}$. By Lemma \ref{tend to G}, we have \

\ %

\begin{align*}
&  \int_{%
\mathbb{R}
^{n}\backslash B_{R\varepsilon}}\left(  \left\vert \nabla u_{\varepsilon
}\right\vert ^{n}+\left\vert u_{\varepsilon}\right\vert ^{n}\right)  dx\\
&  =\frac{1}{C^{\frac{n}{n-1}}+\alpha\left\Vert G_{\alpha}\right\Vert _{{n}%
}^{{n}}}\int_{%
\mathbb{R}
^{n}\backslash B_{R\varepsilon}}\left(  \left\vert \nabla G_{\alpha
}\right\vert ^{n}+\left\vert G_{\alpha}\right\vert ^{n}\right)  dx\\
&  =\frac{1}{C^{\frac{n}{n-1}}+\alpha\left\Vert G_{\alpha}\right\Vert _{{n}%
}^{{n}}}\left(  -G_{\alpha}\left(  R\varepsilon\right)  \int_{\partial
B_{R\varepsilon}}\left(  \left\vert \nabla G_{\alpha}\right\vert ^{n-2}%
\frac{\partial G_{\alpha}}{\partial n}\right)  dx+\alpha\int_{%
\mathbb{R}
^{n}\backslash B_{R\varepsilon}}\left\vert G_{\alpha}\right\vert ^{n}dx\right)
\\
&  =\frac{G_{\alpha}\left(  R\varepsilon\right)  \omega_{n-1}\left\vert
G^{\prime}\left(  R\varepsilon\right)  \right\vert ^{n-1}\left(
R\varepsilon\right)  ^{n-1}+\alpha\int_{%
\mathbb{R}
^{n}\backslash B_{R\varepsilon}}\left\vert G_{\alpha}\right\vert ^{n}%
dx}{C^{\frac{n}{n-1}}+\alpha\left\Vert G_{\alpha}\right\Vert _{{n}}^{{n}}},
\end{align*}
and
\begin{align*}
\int_{B_{R\varepsilon}}\left(  \left\vert \nabla u_{\varepsilon}\right\vert
^{n}\right)  dx  &  =\frac{n-1}{\alpha_{n}\left(  C^{\frac{n}{n-1}}%
+\alpha\left\Vert G_{\alpha}\right\Vert _{{n}}^{{n}}\right)  }\int_{0}%
^{c_{n}R^{\frac{n}{n-1}}}\frac{u^{n-1}}{\left(  1+u\right)  ^{n}}du\\
&  =\frac{n-1}{\alpha_{n}\left(  C^{\frac{n}{n-1}}+\alpha\left\Vert G_{\alpha
}\right\Vert _{{n}}^{{n}}\right)  }\int_{0}^{c_{n}R^{\frac{n}{n-1}}}%
\frac{\left(  \left(  1+u\right)  -1\right)  ^{n-1}}{\left(  1+u\right)  ^{n}%
}du\\
&  =\frac{n-1}{\alpha_{n}\left(  C^{\frac{n}{n-1}}+\alpha\left\Vert G_{\alpha
}\right\Vert _{{n}}^{{n}}\right)  }\left(  \underset{k=0}{\overset{n-2}{\sum}%
}\frac{C_{n-1}^{k}\left(  -1\right)  ^{n-1-k}}{n-k-1}+\right. \\
&  +\left.  \log\left(  1+c_{n}L^{\frac{n}{n-1}}\right)  +O\left(
R^{\frac{-n}{n-1}}\right)  \right)  ,
\end{align*}
using the fact that%

\[
E:=\underset{k=0}{\overset{n-2}{\sum}}\frac{C_{n-1}^{k}\left(  -1\right)
^{n-1-k}}{n-k-1}=-\left(  1+\frac{1}{2}+\frac{1}{3}+\cdots+\frac{1}%
{n-1}\right)  ,
\]
we have
\[
\int_{B_{R\varepsilon}}\left(  \left\vert \nabla u_{\varepsilon}\right\vert
^{n}\right)  dx=-\frac{n-1}{\alpha_{n}\left(  C^{\frac{n}{n-1}}+\alpha
\left\Vert G_{\alpha}\right\Vert _{{n}}^{{n}}\right)  }\left(  E-\log\left(
1+c_{n}R^{\frac{n}{n-1}}\right)  +O\left(  R^{\frac{-n}{n-1}}\right)  \right)
\]

It is easy to check that
\[
\int_{B_{R\varepsilon}}\left(  \left\vert u_{\varepsilon}\right\vert
^{n}\right)  dx=O\left(  \left(  R\varepsilon\right)  ^{n}C^{n}\right)  ,
\]
thus we get
\begin{align*}
&  \int_{%
\mathbb{R}
^{n}}\left(  \left\vert \nabla u_{\varepsilon}\right\vert ^{n}+\left\vert
u_{\varepsilon}\right\vert ^{n}\right)  dx=\frac{1}{\alpha_{n}\left(
C^{\frac{n}{n-1}}+\alpha\left\Vert G_{\alpha}\right\Vert _{{n}}^{{n}}\right)
}\left(  \left(  n-1\right)  E+\left(  n-1\right)  \log\left(  1+c_{n}%
R^{\frac{n}{n-1}}\right)  \right. \\
&  -\log\left(  R\varepsilon\right)  ^{n}+\alpha_{n}A+\alpha\alpha
_{n}\left\Vert G_{\alpha}\right\Vert _{{n}}^{{n}}+O\left( \phi\right)
\end{align*}
where
\[
\phi= \left(  R\varepsilon\right)  ^{n}C^{n}+\left(  R\varepsilon
\right)  ^{n}\log^{n}R\varepsilon+R^{\frac{-n}{n-1}}+C^{\frac{-2n}{n-1}%
}+C^{\frac{n^{2}}{n-1}}R^{n}\varepsilon^{n}  .
\]

Setting $\int_{%
\mathbb{R}
^{n}}\left(  \left\vert \nabla u_{\varepsilon}\right\vert ^{n}+\left\vert
u_{\varepsilon}\right\vert ^{n}\right)  dx=1$, we have%

\begin{align*}
&  \alpha_{n}\left(  C^{\frac{n}{n-1}}+\alpha\left\Vert G_{\alpha}\right\Vert
_{{n}}^{{n}}\right)  =\left(  n-1\right)  E+\left(  n-1\right)  \log\left(
1+c_{n}R^{\frac{n}{n-1}}\right) \\
&  -\log\left(  R\varepsilon\right)  ^{n}+\alpha_{n}A+\alpha\alpha
_{n}\left\Vert G_{\alpha}\right\Vert _{{n}}^{{n}}+O\left( \phi\right),
\end{align*}
that is
\begin{equation}
\alpha_{n}C^{\frac{n}{n-1}}=\left(  n-1\right)  E+\log\frac{\omega_{n-1}}%
{n}-\log\varepsilon^{n}+\alpha_{n}A+O\left( \phi\right). \label{20}%
\end{equation}
On the other hand, by ii) we have%

\[
C-C^{\frac{-1}{n-1}}\left(  \frac{n-1}{\alpha_{n}}\log\left(  1+c_{n}%
\left\vert R\right\vert ^{\frac{n}{n-1}}\right)  -B_{\varepsilon}\right)
=\frac{-\frac{n}{\alpha_{n}}\log R\varepsilon+A+O\left( \phi\right)}{C^{\frac{1}{n-1}}}%
\]
which implies that%

\begin{equation}
C^{\frac{n}{n-1}}=\frac{-n}{\alpha_{n}}\log\varepsilon+\log\frac{\omega_{n-1}%
}{n}-B_{\varepsilon}+A+O\left( \phi\right). \label{21}%
\end{equation}
Combining (\ref{20}) and \bigskip(\ref{21}), we have%

\begin{equation}
B_{\varepsilon}=-\frac{n-1}{\alpha_{n}}E+O\left( \phi\right)\label{22}%
\end{equation}

\bigskip Setting $R=-\log\varepsilon\,$, which satisfies $R\varepsilon
\rightarrow0$ as $\varepsilon\rightarrow0$. We can easily verify that
\begin{equation}
\left\Vert u_{\varepsilon}\right\Vert _{{n}}^{{n}}=\frac{\left\Vert G_{\alpha
}\right\Vert _{{n}}^{{n}}+O\left(  C^{\frac{n^{2}}{n-1}}R^{n}\varepsilon
^{n}\right)  +O\left(  R^{n}\varepsilon^{n}\left(  -\log\left(  R\varepsilon
\right)  ^{n}\right)  \right)  }{C^{\frac{n}{n-1}}+\alpha\left\Vert G_{\alpha
}\right\Vert _{{n}}^{{n}}}.\text{ } \label{add5}%
\end{equation}
It is well known that when $\left\vert t\right\vert <1$,%
\[
\left(  1-t\right)  ^{\frac{n}{n-1}}\geq1-\frac{n}{n-1}t
\]
and%

\[
\left(  1+t\right)  ^{-\frac{1}{n-1}}\geq1-\frac{t}{n-1}\text{.}%
\]
\ By using above inequalities and \bigskip(\ref{add5}), we have for any $x\in
B_{R\varepsilon}$,%

\begin{align*}
&  \alpha_{n}\left\vert u_{\varepsilon}\right\vert ^{\frac{n}{n-1}}\left(
1+\alpha\left\Vert u_{\varepsilon}\right\Vert _{{n}}^{{n}}\right)  ^{\frac
{1}{n-1}}\\
&  =\alpha_{n}C^{\frac{n}{n-1}}\frac{\left(  1-C^{\frac{-n}{n-1}}\left(
\frac{n-1}{\alpha_{n}}\log\left(  1+c_{n}\left\vert \frac{x}{\varepsilon
}\right\vert ^{\frac{n}{n-1}}\right)  -B_{\varepsilon}\right)  \right)
^{\frac{n}{n-1}}}{\left(  1+\alpha C^{\frac{-n}{n-1}}\left\Vert G_{\alpha
}\right\Vert _{{n}}^{{n}}\right)  ^{\frac{1}{n-1}}}\left(  1+\alpha\left\Vert
u_{\varepsilon}\right\Vert _{{n}}^{{n}}\right)  ^{\frac{1}{n-1}}\\
&  \geq\alpha_{n}C^{\frac{n}{n-1}}\left(  1-\frac{n}{n-1}C^{\frac{-n}{n-1}%
}\left(  \frac{n-1}{\alpha_{n}}\log\left(  1+c_{n}\left\vert \frac
{x}{\varepsilon}\right\vert ^{\frac{n}{n-1}}\right)  -B_{\varepsilon}\right)
\right)  \cdot\\
&  \cdot\left(  1-\alpha C^{\frac{-n}{n-1}}\left\Vert G_{\alpha}\right\Vert
_{{n}}^{{n}}\right)  ^{\frac{1}{n-1}}\left(  1+\alpha\left\Vert u_{\varepsilon
}\right\Vert _{{n}}^{{n}}\right)  ^{\frac{1}{n-1}}\\
&  \geq\alpha_{n}C^{\frac{n}{n-1}}\left(  1-\frac{n}{n-1}C^{\frac{-n}{n-1}%
}\left(  \frac{n-1}{\alpha_{n}}\log\left(  1+c_{n}\left\vert \frac
{x}{\varepsilon}\right\vert ^{\frac{n}{n-1}}\right)  -B_{\varepsilon}\right)
\right)  \cdot\\
&  \cdot\left(  1-\alpha C^{\frac{-n}{n-1}}\left\Vert G_{\alpha}\right\Vert
_{{n}}^{{n}}\right)  ^{\frac{1}{n-1}}\left(  1+\alpha\frac{\left\Vert
G_{\alpha}\right\Vert _{{n}}^{{n}}+O\left(  C^{\frac{n^{2}}{n-1}}%
R^{n}\varepsilon^{n}\right)  +O\left(  R^{n}\varepsilon^{n}\left(
-\log\left(  R\varepsilon\right)  ^{n}\right)  \right)  }{C^{\frac{n}{n-1}}%
}\right)  ^{\frac{1}{n-1}}\\
&  \geq\alpha_{n}C^{\frac{n}{n-1}}\left(  1-\frac{n}{n-1}C^{\frac{-n}{n-1}%
}\left(  \frac{n-1}{\alpha_{n}}\log\left(  1+c_{n}\left\vert \frac
{x}{\varepsilon}\right\vert ^{\frac{n}{n-1}}\right)  -B_{\varepsilon}\right)
\right)  \cdot\\
&  \cdot\left(  1-\alpha^{2}C^{\frac{-2n}{n-1}}\left\Vert G_{\alpha
}\right\Vert _{n}^{2n}+C^{\frac{-n}{n-1}}\left(  O\left(  C^{\frac{n^{2}}%
{n-1}}R^{n}\varepsilon^{n}\right)  +O\left(  R^{n}\varepsilon^{n}\left(
-\log\left(  R\varepsilon\right)  ^{n}\right)  \right)  \right)  \right)
^{\frac{1}{n-1}}\\
&  \geq\alpha_{n}C^{\frac{n}{n-1}}\left(  1-\frac{n}{n-1}C^{\frac{-n}{n-1}%
}\left(  \frac{n-1}{\alpha_{n}}\log\left(  1+c_{n}\left\vert \frac
{x}{\varepsilon}\right\vert ^{\frac{n}{n-1}}\right)  -B_{\varepsilon}\right)
\right)  \cdot\\
&  \cdot\left(  1-\frac{1}{n-1}\alpha^{2}C^{\frac{-2n}{n-1}}\left\Vert
G_{\alpha}\right\Vert _{n}^{2n}+C^{\frac{-n}{n-1}}\left(  O\left(
C^{\frac{n^{2}}{n-1}}R^{n}\varepsilon^{n}\right)  +O\left(  R^{n}%
\varepsilon^{n}\left(  -\log\left(  R\varepsilon\right)  ^{n}\right)  \right)
\right)  \right)  \\
&  \geq\alpha_{n}C^{\frac{n}{n-1}}-n\log\left(  1+c_{n}\left\vert \frac
{x}{\varepsilon}\right\vert ^{\frac{n}{n-1}}\right)  +\frac{n\alpha_{n}}%
{n-1}B_{\varepsilon}-\frac{\alpha_{n}\alpha^{2}\left\Vert G_{\alpha
}\right\Vert _{n}^{2n}}{\left(  n-1\right)  C^{\frac{n}{n-1}}}+O\left( \phi\right).
\end{align*}
By (\ref{20}) and (\ref{22}), we obtain
\begin{align*}
&  \alpha_{n}\left\vert u_{\varepsilon}\right\vert ^{\frac{n}{n-1}}\left(
1+\alpha\left\Vert u_{\varepsilon}\right\Vert _{{n}}^{{n}}\right)  ^{\frac
{1}{n-1}}\\
&  \geq-E+\log\frac{\omega_{n-1}}{n}-\log\varepsilon^{n}-n\log\left(
1+c_{n}\left\vert \frac{x}{\varepsilon}\right\vert ^{\frac{n}{n-1}}\right)  \\
&  -\frac{\alpha_{n}\alpha^{2}\left\Vert G_{\alpha}\right\Vert _{{n}}^{2{n}}%
}{\left(  n-1\right)  C^{\frac{n}{n-1}}}+\alpha_{n}A+O\left( \phi\right).
\end{align*}
Then we have%

\begin{align*}
&  \int_{B_{R\varepsilon}}\Phi\left(  \alpha_{n}\left\vert u_{\varepsilon
}\right\vert ^{\frac{n}{n-1}}\left(  1+\alpha\left\Vert u_{\varepsilon
}\right\Vert _{{n}}^{{n}}\right)  ^{\frac{1}{n-1}}\right)  dx\\
&  \geq\exp\left\{  -E+\alpha_{n}A+\log\frac{\omega_{n-1}}{n}-\log
\varepsilon^{n}-\frac{\alpha_{n}\alpha^{2}\left\Vert G_{\alpha}\right\Vert
_{{n}}^{2{n}}}{\left(  n-1\right)  C^{\frac{n}{n-1}}}+O\left( \phi\right)\right\}  \cdot\\
&  \cdot\int_{B_{R\varepsilon}}\exp\left\{  -n\log\left(  1+c_{n}\left\vert
\frac{x}{\varepsilon}\right\vert ^{\frac{n}{n-1}}\right)  \right\} \\
&  \geq c_{n}^{n-1}\varepsilon^{-n}\exp\left\{  -E+\alpha_{n}A-\frac
{\alpha_{n}\alpha^{2}\left\Vert G_{\alpha}\right\Vert _{{n}}^{2{n}}}{\left(
n-1\right)  C^{\frac{n}{n-1}}}+O\left( \phi\right)\right\}  \int_{B_{R\varepsilon}}\left(
1+c_{n}\left\vert \frac{x}{\varepsilon}\right\vert ^{\frac{n}{n-1}}\right)
^{-n}dx\\
&  \geq\frac{\left(  n-1\right)  \omega_{n-1}}{n}\exp\left\{  -E+\alpha
_{n}A-\frac{\alpha_{n}\alpha^{2}\left\Vert G_{\alpha}\right\Vert _{{n}}^{2{n}%
}}{\left(  n-1\right)  C^{\frac{n}{n-1}}}+O\left( \phi\right)\right\}  \int_{0}%
^{c_{n}R^{\frac{n}{n-1}}}\frac{u^{n-2}}{\left(  1+u\right)  ^{n}}du\\
&  \geq\frac{\left(  n-1\right)  \omega_{n-1}}{n}\exp\left\{  -E+\alpha
_{n}A-\frac{\alpha_{n}\alpha^{2}\left\Vert G_{\alpha}\right\Vert _{{n}}^{2{n}%
}}{\left(  n-1\right)  C^{\frac{n}{n-1}}}+O\left( \phi\right)\right\}  \left(  \frac{1}%
{n-1}+o\left(  R^{\frac{-n}{n-1}}\right)  \right) \\
&  \geq\frac{\omega_{n-1}\exp\left\{  -E+\alpha_{n}A\right\}  }{n}\left(
1-\frac{\alpha_{n}\alpha^{2}\left\Vert G_{\alpha}\right\Vert _{{n}}^{2{n}}%
}{\left(  n-1\right)  C^{\frac{n}{n-1}}}+O\left( \phi\right)\right)
\end{align*}

\bigskip On the other hand, we have

\begin{align*}
\int_{%
\mathbb{R}
^{n}\backslash B_{R\varepsilon}}\Phi\left(  \alpha_{n}u_{\varepsilon}%
^{\frac{n}{n-1}}\right)  dx  & \geq\frac{\alpha_{n}^{n-1}}{\left(  n-1\right)
!C^{\frac{n}{n-1}}}\int_{%
\mathbb{R}
^{n}\backslash B_{R\varepsilon}}\left\vert G_{\alpha}\right\vert ^{n}dx\\
& =\frac{\alpha_{n}^{n-1}\left\Vert G_{\alpha}\right\Vert _{n}^{n}+O\left(
R^{n}\varepsilon^{n}\left(  \log\left(  R\varepsilon\right)  ^{n}\right)
\right)  }{\left(  n-1\right)  !C^{\frac{n}{n-1}}},
\end{align*}
thus
\begin{align*}
&  \int_{%
\mathbb{R}
^{n}}\Phi\left(  \alpha_{n}\left\vert u_{\varepsilon}\right\vert ^{\frac
{n}{n-1}}\left(  1+\alpha\left\Vert u_{\varepsilon}\right\Vert _{n}%
^{n}\right)  ^{\frac{1}{n-1}}\right)  dx\\
&  \geq\frac{\omega_{n-1}}{n}\exp\left\{  -E+\alpha_{n}A\right\}  \left(
1-\frac{\alpha_{n}\alpha^{2}\left\Vert G_{\alpha}\right\Vert _{{n}}^{2{n}}%
}{\left(  n-1\right)  C^{\frac{n}{n-1}}}+O\left( \phi\right)\right)  +\frac{\alpha_{n}%
^{n-1}\left\Vert G_{\alpha}\right\Vert _{n}^{n}}{\left(  n-1\right)
!C^{\frac{n}{n-1}}}.
\end{align*}

Since $R=\log\frac{1}{\varepsilon}$, by (\ref{21}) we have $R\sim C^{\frac
{n}{n-1}}$, then we can easily verify that $\phi=o\left(  C^{\frac{-n}{n-1}%
}\right)  $. Hence when $\alpha$ small enough, we have%

\[
\int_{%
\mathbb{R}
^{n}}\Phi\left(  \alpha_{n}\left\vert u_{\varepsilon}\right\vert ^{\frac
{n}{n-1}}\left(  1+\alpha\left\Vert u_{\varepsilon}\right\Vert _{{n}}^{{n}%
}\right)  ^{\frac{1}{n-1}}\right)
dx>\frac{\omega_{n-1}}{n}\exp\left\{ -E+\alpha_{n}A\right\}.
\]
Therefore the proof of Theorem \ref{attain} is completely finished.
\end{proof}

{\bf Acknowledgement:} The main results of this paper were presented by the second author in the AMS special session on   Geometric Inequalities and Nonlinear Partial Differential Equations in Las Vegas in April, 2015. 

\bigskip

\end{document}